\title{The multilayer shallow water system in the limit of small density contrast}
\author{Vincent Duch\^ene%
\thanks{IRMAR - UMR6625, CNRS and Universit{\'e} de Rennes 1 ({\tt vincent.duchene@univ-rennes1.fr}).}}
\date{\today}
\numberwithin{equation}{section}
\let\Title\@title
\let\Author\@author
\newcommand{\RR}{\mathbb{R}}
\newcommand{\CC}{\mathbb{C}}
\newcommand{\ZZ}{\mathbb{Z}_{h_0}}
\newcommand{\NN}{\mathbb{N}}
\newcommand{\VV}{\mathbb{V}_{h_0}}
\newcommand{\e}{\textbf{e}}
\newcommand{\m}{\mathfrak{m}}
\newcommand{\mT}{\mathcal{T}}
\newcommand{\mF}{\mathcal{F}}
\newcommand{\mS}{\mathcal{S}}
\newcommand{\mR}{\mathcal{R}}
\newcommand{\M}{\mathcal{M}}
\renewcommand{\O}{\mathcal{O}}
\renewcommand{\u}{\mathbf{u}}
\renewcommand{\v}{\mathbf{v}}
\newcommand{\w}{\mathbf{w}}
\newcommand{\x}{\mathbf{x}}
\newcommand{\z}{\mathbf{0}}
\renewcommand{\L}{{\sf L}}
\newcommand{\C}{{\sf C}}
\newcommand{\D}{{\sf D}}
\newcommand{\U}{{\sf U}}
\newcommand{\V}{{\sf V}}
\newcommand{\W}{{\sf W}}
\newcommand{\A}{{\sf A}}
\newcommand{\B}{{\sf B}}
\newcommand{\J}{{\sf J}}
\newcommand{\Z}{{\sf Z}}
\renewcommand{\S}{{\sf S}}
\newcommand{\R}{{\sf R}}
\renewcommand{\P}{{\sf P}}
\newcommand{\Q}{{\sf Q}}
\newcommand{\T}{{\sf T}}
\newcommand{\eqdef}{\stackrel{\rm def}{=}}
\renewcommand{\t}{\tilde}
\newcommand{\dd}{{\rm d}}
\DeclareMathOperator{\diag}{diag}
\DeclareMathOperator{\vect}{span}
\DeclareMathOperator{\ran}{ran}
\DeclareMathOperator{\tr}{tr}
\DeclareMathOperator{\Ro}{Ro}
\newcommand{\Id}{{\sf Id}}
\renewcommand{\r}{\varrho}
\newcommand{\RL}{{\rm RL}}
\newcommand{\init}{{\rm in}}
\newcommand{\into}[1]{[0,#1)}
\newcommand{\intf}[1]{[0,#1]}
\newcommand{\Pif}{{\sf \Pi}_{\rm f}}
\newcommand{\Pis}{(\Id-{\sf \Pi}_{\rm f})}
\newcommand{\Pifx}{{\sf \Pi}_{\rm f}^x}
\newcommand{\Pisx}{(\Id-{\sf \Pi}_{\rm f}^x)}
\newcommand{\dsp}{\displaystyle}
\newcommand{\ie}{{\em i.e.}~}
\newcommand{\eg}{{\em e.g.}~}
\newcommand{\id}[1]{\left\vert_{_{#1}}\right.}
\DeclarePairedDelimiter\abs{\lvert}{\rvert}
\DeclarePairedDelimiter\norm{\big\lvert}{\big\rvert}
\DeclarePairedDelimiter\Norm{\big\lVert}{\big\rVert}
\DeclarePairedDelimiter\bNorm{\big\lvert\hspace{-1pt}\big\lvert\hspace{-1pt}\big\lvert}{\big\lvert\hspace{-1pt}\big\lvert\hspace{-1pt}\big\lvert}
\newtheorem{Theorem}{Theorem}[section]
\newtheorem{Definition}[Theorem]{Definition}
\newtheorem{Proposition}[Theorem]{Proposition}
\newtheorem{Corollary}[Theorem]{Corollary}
\newtheorem{Lemma}[Theorem]{Lemma}
\newtheorem{Remark}[Theorem]{Remark}
\begin{document}
\maketitle

\begin{abstract}
We study the inviscid multilayer Saint-Venant (or shallow-water) system in the limit of small density contrast. We show that, under reasonable hyperbolicity conditions on the flow and a smallness assumption on the initial surface deformation, the system is well-posed on a large time interval, despite the singular limit. By studying the asymptotic limit, we provide a rigorous justification of the widely used rigid-lid and Boussinesq approximations for multilayered shallow water flows. The asymptotic behaviour is similar to that of the incompressible limit for Euler equations, in the sense that there exists a small initial layer in time for ill-prepared initial data, accounting for rapidly propagating ``acoustic'' waves (here, the so-called barotropic mode) which interact only weakly with the ``incompressible'' component (here, baroclinic). 
\end{abstract}

\tableofcontents

\section{Introduction}
\subsection{Presentation of the models and the problem}
This work dedicated to the study of the so-called multilayer Saint-Venant system, which arises as an approximate model for the propagation of waves in the ocean or atmosphere, when density stratification cannot be neglected. We will refer to as {\em free surface system} the following  first-order, quasilinear system of $N(1+d)$ coupled evolution equations:
\begin{equation}\label{FS-U-intro}
\left\{ \begin{array}{l}
\displaystyle\partial_{ t}{\zeta_n} \ + \ \sum_{i=n}^N\nabla\cdot (h_i\u_i) \ =\ 0,  \\ 
\\
\displaystyle\partial_{ t}\u_n \ + \ \frac{g}{\rho_n}\sum_{i=1}^n (\rho_i-\rho_{i-1})\nabla\zeta_i\ + \  (\u_{n}\cdot\nabla)\u_n  \ = \ \z\ .
\end{array}
\right. \qquad (n=1,\dots,N)
\end{equation}
Here, the unknowns $\zeta_n(t,\x)$ and $\u_n(t,\x)$ represent respectively the deformation of the $n^{\rm th}$ interface and the layer-averaged horizontal velocity in the $n^{\rm th}$ layer, at time $t$ and horizontal position $\x\in\RR^d$ where $d\in\{1,2\}$; see Figure~\ref{F.SketchOfTheDomain}. If $d=2$, then we denote $\x=(x,y)$ and $\u_n=(u_n^x,u_n^y)$. We denote by $\rho_n>0$ the mass density of the homogeneous fluid in the $n^{\rm th}$ layer, whereas $g$ is the gravitational acceleration. Finally, 
\[ h_n(t,\x)\eqdef \delta_n+\zeta_n(t,\x)-\zeta_{n+1}(t,\x)>0\]
 is the depth of the $n^{\rm th}$ layer. By convention, we set $\rho_0=0$ (above the upper free surface is vacuum), and $\zeta_{N+1}(t,\x)\equiv 0$ (the bottom is flat). We restrict ourselves to the setting of stable stratification, namely
\[ 0<\rho_1<\dots<\rho_N.\]

\begin{figure}[htbp]
\begin{center}\includegraphics[width=.7\textwidth]{./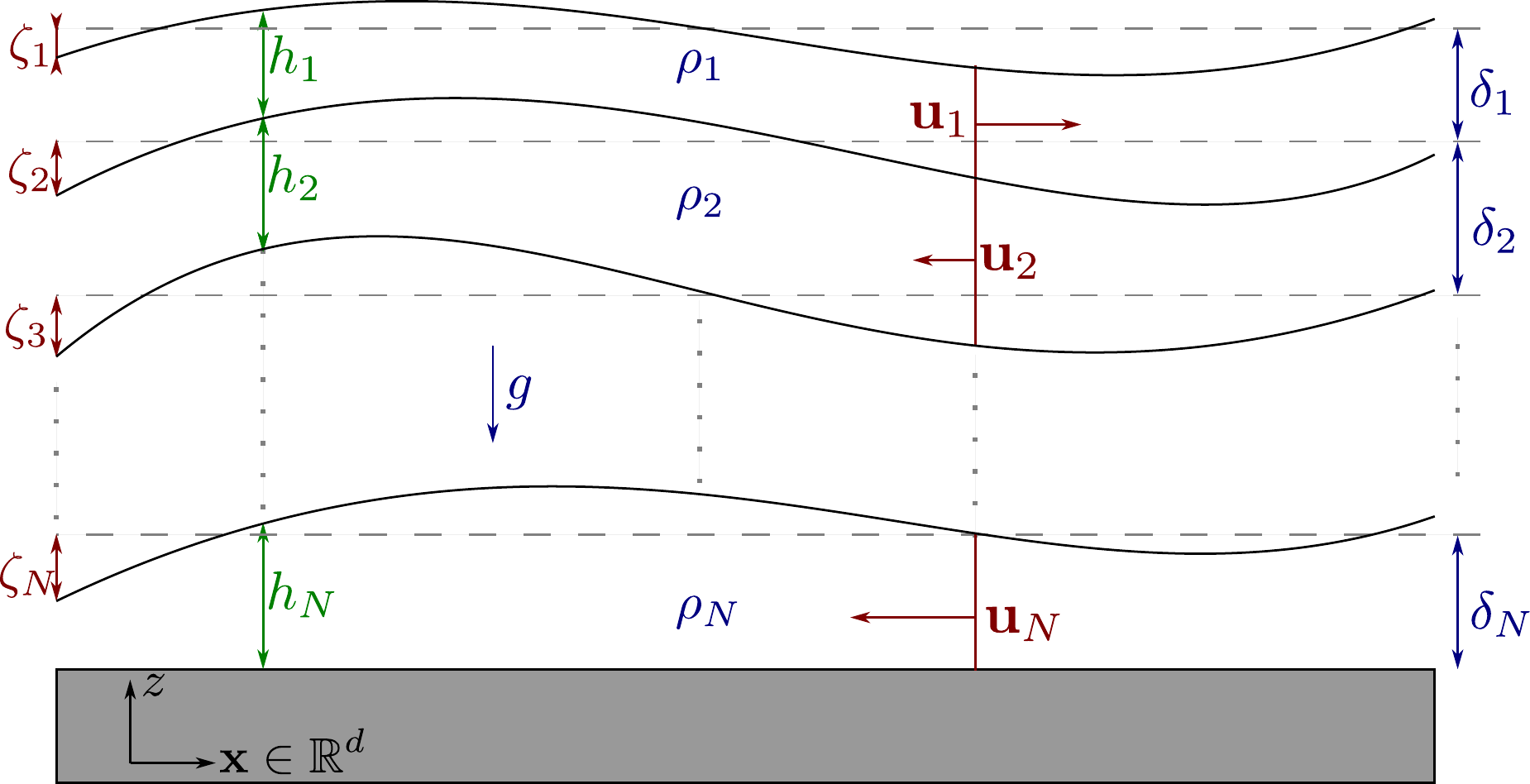}
\end{center}
\caption{Sketch of the domain and notations}
\label{F.SketchOfTheDomain}
\end{figure}

We may rescale the variables so as to replace the factor
\[ \frac{g}{\rho_n} (\rho_i-\rho_{i-1}) \longleftarrow \frac{r_i}{\gamma_n} \quad \text{ where } \quad \text{$r_i\eqdef \frac{\gamma_i-\gamma_{i-1}}{1-\gamma_1}$ and $\gamma_n\eqdef \frac{\rho_n}{\rho_N}$}.\]
 More precisely, we use the following nondimensionalization:
\[ \x \leftarrow \x / \lambda, \quad \ \zeta_n\leftarrow\zeta_n / a, \quad \delta_n \leftarrow \delta_n/ a, \quad t \leftarrow c_0 t/\lambda, \quad  \u_n \leftarrow  \u_n/ c_0,\]
where $\lambda $ is a characteristic horizontal length (say the wavelength of the flow), $a$ is a characteristic vertical length (say the typical depth of one layer at rest), and $c_0\eqdef \sqrt{(1-\gamma_1)ga}$ is a velocity. \footnote{Because we assume that the layers are all of comparable depth and the vertical stratification is balanced, in the sense that we fix $\m>0$ such that $\sup_{i\in\{1,\dots,N\}}\{\delta_i,\delta_i^{-1},r_i,r_i^{-1}\}\leq \m$,
then $c_0$ measures the typical velocity of propagation of the baroclinic modes; see Appendix~\ref{S.spectral}.}

Although $\lambda$ does not appear in our system, it plays a very important role as the Saint-Venant system may be seen as the first-order asymptotic model obtained from the full multilayer water-wave system in the limit $\mu\eqdef a^2/\lambda^2\to 0$; see~\cite{Lannes,CastroLannesa} in the one-layer case and~\cite{Duchene10} in the bilayer (albeit irrotational) setting. It may also be formally obtained using the hydrostatic and columnar motion assumptions; see~\cite{SchijfSchonfled53,Ovsjannikov79,Baines88,LiskaWendroff97,StewartDellar10}.

In this work, we ask\\
{\bf Qn: } {\em What is the behaviour of the solutions to~\eqref{FS-U-intro} in the limit $\gamma_1\to \gamma_N=1$?}
\medskip

The first observation is that the velocity evolution equations become singular, as ${r_1=\frac{\gamma_1-\gamma_0}{1-\gamma_1}\to\infty}$ since $\gamma_0=0$ by convention, so that even the existence of solutions on a non-trivial time interval is far from straightforward. 

At the linear level, it is known~\cite{StewartDellar12} (see also Appendix~\ref{S.spectral}) that the flow may be decomposed into $N$ modes, propagating as linear wave equations with distinct velocities. In our setting, the first mode, \ie the barotropic mode, propagates much faster than the other, baroclinic modes, in the sense that the former is typically of size $\approx \frac1{\sqrt{1-\gamma_1}}$ while the latter are uniformly bounded when $\gamma_1\to 1$; hence the singularity. While such a decomposition is exact only in the linear setting, we show in this work that the flow behaves in a similar way in the weak density contrast limit even when strong nonlinearities are present, provided that the initial surface deformation is small, the depth of each layer remains positive and shear velocities are not too large. Roughly speaking, we show that one may then approximate the flow as the superposition of rapidly propagating acoustic waves and a non-singular ``slow'' mode with non-trivial dynamics. 

Let us be more precise. Asymptotically, the fast mode describes the propagation of the free surface, $\zeta_1$, and total volume flux, namely $\Pi\w$ where $\w\eqdef \sum_{i=1}^N h_i\u_i$ and $\Pi\eqdef\nabla\Delta^{-1}\nabla\cdot$ is the orthogonal projection onto irrotational vector fields. One has at first order, in dimension $d=2$ and provided that $\zeta_1$ and $\Pi \w$ are initially balanced so that $\zeta_1\approx \sqrt{1-\gamma_1}\w$, the linear acoustic system
\begin{equation} \label{acoustic-intro}
\partial_t\zeta_1+\nabla\cdot (\Pi\w) =0 ; \qquad \partial_t (\Pi\w) +\frac{\sum_{n=1}^N \delta_n}{1-\gamma_1} \nabla\zeta_1=0.\end{equation}
The slow component contains all the baroclinic modes which interact strongly one with each other in the nonlinear setting. We show that it is asymptotically described by the {\em rigid-lid model}, which is obtained from the free-surface system by setting $\zeta_1\equiv 0$ in the mass conservation equations, and replacing $r_1\nabla\zeta_1$ with $\nabla p$ in the velocity evolution equations (see~\cite{Long56,Baines88}). In addition, we apply the so-called {\em Boussinesq approximation} (see \eg~\cite{Gill82}) to the limit system, that is we set $\gamma_n=1$ in the velocity evolution equations while $r_n$ remains fixed and non-trivial. This yields
\begin{equation}\label{RL}
\left\{ \begin{array}{l}
\displaystyle\partial_{ t}{\zeta_n} \ + \ \sum_{i=n}^N\nabla\cdot (h_i\u_i) \ =\ 0,  \\ 
\\
\displaystyle\partial_{ t}\u_n \  + \ \nabla p\ + \ \sum_{i=2}^n r_i \nabla\zeta_i\ + \  (\u_{n}\cdot\nabla)\u_n  \ = \ \z\ ,
\end{array}
\right. \qquad (n=1,\dots,N)
\end{equation}
with the same notation as before, except $\zeta_1(t,\x)\equiv 0$ and $h_1(t,\x)\eqdef\delta_1-\zeta_2(t,\x)$. In particular, the first equation is not an evolution equation but a constraint (conservation of total mass), namely
\[ \nabla\cdot \left(\sum_{i=1}^N  h_i\u_i\right) \ =\ 0.\]
Physically speaking, $p(t,\x)$ is (up to a constant) the pressure at the flat, rigid lid. From a mathematical viewpoint, $p$ is the Lagrange multiplier associated with the above divergence-free constraint. It may be reconstructed from the knowledge of $\zeta_2,\dots,\zeta_N,\u_1,\dots,\u_N$ by solving the Poisson equation
\[ \left( \sum_{n=1}^N \delta_n\right) \Delta p \ + \ \sum_{n=1}^N \nabla\cdot \left(   h_n(\u_n\cdot\nabla)\u_n+\u_n \nabla\cdot(h_n\u_n) + h_n \sum_{i=2}^n r_i\nabla\zeta_i \right) \ = \ 0.\]
 
 We thus offer a rigorous justification (the formal justification is generally attributed to Armi~\cite{Armi86}) of the widely used rigid-lid and Boussinesq approximations for free surface multilayer shallow-water flows with a small density contrast.

\subsection{Main results}\label{S.mr}
Before stating our main results, let us recall for convenience the system of equations at stake:
\begin{equation}\label{FS-U-mr}
\left\{ \begin{array}{l}
\displaystyle\partial_{ t}{\zeta_n} \ + \ \sum_{i=n}^N\nabla\cdot (h_i\u_i) \ =\ 0,  \\ 
\\
\displaystyle\partial_{ t}\u_n \  + \ \frac1{\gamma_n}\frac{\gamma_1}{1-\gamma_1} \nabla\zeta_1\ + \ \frac{1}{\gamma_n}\sum_{i=2}^n r_i \nabla\zeta_i\ + \  (\u_{n}\cdot\nabla)\u_n  \ = \ \z\ .
\end{array}
\right. \qquad (n=1,\dots,N)
\end{equation}
where $h_i\eqdef \delta_i+\zeta_i-\zeta_{i+1}$ with convention $\zeta_{N+1}\equiv 0$ and $r_i\eqdef \frac{\gamma_i-\gamma_{i-1}}{1-\gamma_1}$.
In the following, we fix parameters $\delta_1,\dots,\delta_N,r_2,\dots,r_N\in(0,\infty)$; and denote
\[ \m\eqdef \max_{i\in\{1,\dots,N\}}\{\delta_i,\delta_i^{-1},r_i,r_i^{-1}\}.\]
We can then reconstruct $\gamma_i\in(0,1)$ with $\gamma_i=1-\r^2 \sum_{j=i+1}^N r_j$  ($i=2,\dots,N$) and $\gamma_1=1-\r^2 $ where $\r$ is the only parameter allowed to vary, and by assumption
\[ \r  \ll 1.\]
It is also convenient to denote (notice the $\r^{-1}$ prefactor; see Remark~\ref{R.small-surface} below)
\[ \U\eqdef (\r^{-1}\zeta_1,\zeta_2,\dots,\zeta_N,u^x_1,\dots,u^x_N,u^y_1,\dots,u^y_N)^\top,\]
so that the control of $\U$ in Sobolev space $H^s(\RR^d)$ (see Appendix~\ref{S.notations} for notations) yields, when $d=2$,
\[\norm{\U}_{H^s}^2\eqdef \frac1{\r ^2}\norm{\zeta_1}_{H^s}^2+\sum_{n=2}^N \norm{\zeta_n}_{H^s}^2+\sum_{n=1}^N \norm{u^x_n}_{H^s}^2+\sum_{n=1}^N \norm{u^y_n}_{H^s}^2.\]
\medskip

Let us now state the main results of this work.
\begin{Theorem}[Large time well-posedness]\label{T.WP}
Let $s>d/2+1$ and $\U^\init\in H^s(\RR^d)^{N(1+d)}$ be such that
\begin{equation}\label{C.depth-mr}
\forall n\in \{1,\dots, N\}, \quad  \inf_{\x\in\RR^d} h_n^\init\geq h_0>0, 
\end{equation}
where $h_n^\init\eqdef \delta_n+\zeta_n^\init-\zeta_{n+1}^\init $, and we recall the convention $\zeta_{N+1}\equiv 0$.

One can set $\nu_0,\r_0^{-1}=C(\m,h_0^{-1},\norm{\U^\init}_{H^s})$ such that if $\U^\init$ satisfies additionally
\begin{equation}\label{C.hyp-mr}
\forall n\in \{2,\dots, N\}, \quad  \sup_{\x\in\RR^d}\big| \u_n^\init-\u_{n-1}^\init\big|  <\nu_0^{-1},
\end{equation}
then there exists $T>0$ and a unique $\U\in C^0(\into{T};H^s(\RR^d)^{N(1+d)})$ strong solution to~\eqref{FS-U-mr} and $\U\id{t=0}=\U^\init$, and satisfying~\eqref{C.depth-mr},\eqref{C.hyp-mr} with $h_0/2$ and $\nu_0/2$ for any $t\in\into{T}$. Moreover, one has
\[     T^{-1}\leq  \norm{\U^\init}_{H^s}\times C(\m,h_0^{-1},\norm{\U^\init}_{H^s}) \quad  \text{ and } \quad \Norm{U}_{L^\infty(0,T;H^s)}\leq  \norm{\U^\init}_{H^s}\times C(\m,h_0^{-1},\norm{\U^\init}_{H^s}) ,\]
uniformly with respect to $\r\in(0,\r_0)$.
\end{Theorem}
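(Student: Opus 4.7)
My strategy is to symmetrize~\eqref{FS-U-mr} in such a way that the singular $\r^{-1}$ contributions become exactly skew-symmetric, and then to run a uniform-in-$\r$ energy estimate in $H^s$ on the renormalized unknown $\U$. Writing $\zeta_1=\r \t\zeta_1$, the mass equation for $n=1$ reads $\partial_t\t\zeta_1+\r^{-1}\sum_{i=1}^N\nabla\cdot(h_i\u_i)=0$, while the velocity equations take the form $\partial_t\u_n+\r^{-1}\gamma_n^{-1}\nabla\t\zeta_1+\gamma_n^{-1}\sum_{i=2}^n r_i\nabla\zeta_i+(\u_n\cdot\nabla)\u_n=\z$ (up to the benign $-\r\gamma_n^{-1}\nabla\t\zeta_1$). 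This exhibits the system as a nonlinear first-order system $\A_0(\U)\partial_t\U+\sum_{j}\A_j(\U)\partial_j\U+\r^{-1}\S(\U,\partial)\U=\z$, where the singular part $\S$ couples only $\t\zeta_1$ and $\w=\sum_i h_i\u_i$.

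The first step is to construct a symmetrizer $\Q(\U)$ that is block-diagonal in the layer indices, built from the natural physical energy (with weights $\gamma_n h_n$ in front of $|\u_n|^2$, and weights derived from the $r_i$'s in front of the $\zeta_i$'s), such that: (i) $\Q\A_0$ is symmetric and, under~\eqref{C.depth-mr} and~\eqref{C.hyp-mr}, uniformly positive definite with bounds depending only on $\m,h_0^{-1},\nu_0$; (ii) each $\Q\A_j$ is symmetric; and (iii) $\Q\S$ is skew-symmetric, so that the $\r^{-1}$ term drops out of the $L^2$ energy identity. The interlayer upper-triangular structure of $\sum_{i=n}^N\nabla\cdot(h_i\u_i)$ and the lower-triangular structure of $\sum_{i=2}^n r_i\nabla\zeta_i$ combine, once the $\gamma_n$-weights are properly tuned, to yield the required symmetry.

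Once the symmetrizer is in place, I apply $\partial^\alpha$ for $|\alpha|\leq s$ to~\eqref{FS-U-mr}, pair with $\Q(\U)\partial^\alpha\U$, integrate by parts, and control the commutators $[\partial^\alpha,\A_j(\U)]\partial_j\U$ and $[\partial^\alpha,\Q(\U)]$ via classical Kato-Ponce / Moser tame estimates, using $s>d/2+1$. The key observation is that no commutator involving $\S$ survives: $\S$ has constant coefficients after renormalization, so $[\partial^\alpha,\r^{-1}\S]=0$, and its contribution to the energy identity vanishes exactly by the skew-symmetry of $\Q\S$. All other terms are of order $\r^0$, yielding the differential inequality
\[ \frac{\dd}{\dd t}\norm{\U(t)}_{H^s}^2 \ \leq \ C\big(\m,h_0^{-1},\nu_0,\norm{\U(t)}_{H^s}\big)\, \norm{\U(t)}_{H^s}^2,\]
with constants independent of $\r\in(0,\r_0]$. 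Local existence is then obtained by a standard iterative/parabolic regularization scheme applied to the linearized system (whose well-posedness is provided by the same symmetrizer argument), passing to the limit thanks to the uniform $H^s$ bound together with compactness in weaker norms for uniqueness of limits. A bootstrap on the conditions~\eqref{C.depth-mr},\eqref{C.hyp-mr}, using Sobolev embedding $H^s\hookrightarrow W^{1,\infty}$, propagates them with the factors $h_0/2$, $\nu_0/2$ on a time interval $[0,T)$ with $T^{-1}\lesssim \norm{\U^\init}_{H^s}\cdot C(\m,h_0^{-1},\norm{\U^\init}_{H^s})$. Uniqueness follows by running the same estimate on the difference of two solutions, now in $L^2$.

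The main obstacle is the construction of $\Q(\U)$: one must simultaneously symmetrize the nonlinear principal part, ensure exact skew-symmetry of the $\r^{-1}$ contribution (including consistency between the $\r^{-1}\nabla\t\zeta_1$ term in each velocity equation and the $\r^{-1}\nabla\cdot(h_i\u_i)$ terms in the $\t\zeta_1$ equation), and obtain uniform coercivity from~\eqref{C.depth-mr},\eqref{C.hyp-mr} alone. For $N=1$ this is textbook; in the multilayer case the cascade through the $\gamma_n$ and $r_n$ weights, and the fact that shear velocities $\u_n-\u_{n-1}$ enter the non-principal blocks non-trivially, force a careful algebraic bookkeeping. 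Once this is performed, the rest of the argument is a fairly standard quasilinear energy method, but the whole point of the theorem is precisely that the singular $\r^{-1}$ part is neutralized rather than merely estimated, which is what delivers the $\r$-uniform lifespan.
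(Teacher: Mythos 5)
There is a genuine gap, and it sits exactly at the claim on which your whole argument rests: that after writing $\zeta_1=\r\t\zeta_1$ the singular part $\r^{-1}\S$ ``has constant coefficients'', so that $[\partial^\alpha,\r^{-1}\S]=0$ and its contribution vanishes by skew-symmetry. In the unknowns $(\t\zeta_1,\zeta_2,\dots,\zeta_N,\u_1,\dots,\u_N)$ this is false. The equation for $\t\zeta_1$ is $\partial_t\t\zeta_1+\r^{-1}\sum_{i}\nabla\cdot(h_i\u_i)=0$, and $\sum_i\nabla\cdot(h_i\u_i)$ is a genuinely quasilinear expression: the corresponding row of the coefficient matrix contains the entries $\r^{-1}h_i$ and $\r^{-1}(\u_i-\u_{i-1})$ (see the blocks ${\sf H}$ and ${\sf M}$ in~\eqref{def-A}), which are variable and of size $\O(\r^{-1})$ since $h_i$ and $\u_i$ are $\O(1)$ unknowns for $i\geq 2$. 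Consequently the commutators $[\partial^\alpha,\r^{-1}\S]$ and the terms $\partial_x(\r^{-1}\Q\A)$, $\partial_t\Q$ appearing in the energy identity are $\O(\r^{-1})$, not $\O(1)$, and the scheme you describe only yields $T\gtrsim\r$ --- this is precisely the ``naive'' estimate of Remark~\ref{R.naive}. Your observation that $\S$ ``couples only $\t\zeta_1$ and $\w=\sum_ih_i\u_i$'' is the right intuition, but $\w$ is not one of your unknowns; to exploit it you must actually change unknowns. The paper's proof does exactly this: it passes to $(\r^{-1}\zeta_1,\zeta_2,\dots,\zeta_N,\v_2,\dots,\v_N,\w)$ with $\v_i=\gamma_i\u_i-\gamma_{i-1}\u_{i-1}$, after which the singular part is $\r^{-1}\L\partial_x$ with $\L$ constant, and --- crucially --- the \emph{entire} $\V$-dependence of the coefficient matrices carries an $\r$ prefactor (estimate~\eqref{B-estimates-4}). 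Without that property, no choice of multiplier $\Q(\U)$ can neutralize the singular terms, because the obstruction is in the commutators and derivatives, not in the quadratic form itself.

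Two further points would still need repair even after the change of variables. First, the symmetrizer must satisfy $\T[\V]=\T_{(0)}+\T_{(1)}[\Pis\V]\Pis+\O(\r)$, i.e.\ it may depend on the fast variables $(\r^{-1}\zeta_1,\Pi\w)$ only at order $\O(\r)$; otherwise $\partial_t\T=\O(\r^{-1})$ for ill-prepared data (since $\partial_t(\r^{-1}\zeta_1)$ and $\partial_t\w$ are $\O(\r^{-1})$), and the energy identity again degenerates. The ``natural physical energy'' symmetrizer you propose does not have this property, nor is it coercive under the shear-only smallness assumption~\eqref{C.hyp-mr}; the paper explicitly exploits the freedom in the off-diagonal block ${\sf L}$ of~\eqref{def-Sx} (and, in its actual proof, builds $\T^x$ from spectral projections) to obtain both features. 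Second, in dimension $d=2$ the system admits only a \emph{symbolic} symmetrizer $\S[\U,\xi]=\Q(\xi)^{-1}\S^x[\Q(\xi)\U]\Q(\xi)$, not a Friedrichs one, so the ``apply $\partial^\alpha$, pair with $\Q(\U)\partial^\alpha\U$, Kato--Ponce'' scheme must be replaced by a paradifferential quantization in which the $\r$-dependence of the order-zero remainders is tracked explicitly; this is the content of Lemmas~\ref{L.S} and~\ref{L.paralinearization}. The outer shell of your argument (iteration, bootstrap on~\eqref{C.depth-mr}--\eqref{C.hyp-mr}, uniqueness in $L^2$) matches the paper, but the core mechanism that delivers the $\r$-uniform lifespan is missing.
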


\begin{Theorem}[Strong convergence]\label{T.convergence}
Let $d=2$, $s>d/2+1$, and $\U^\init\in H^{s}(\RR^d)^{N(1+d)}$ as above. Then there exists $T>0$ with $  T^{-1}\leq  \norm{\U^\init}_{H^s}\times C(\m,h_0^{-1},\norm{\U^\init}_{H^s})$ and, for $\r$ sufficiently small,
\begin{itemize}
\item $\U\in C^0(\into{T};H^s(\RR^d))^{N(1+d)}$ a unique strong solution to~\eqref{FS-U-mr} and $\U\id{t=0}=\U^\init$;
\item $(p^\RL,\zeta_2^\RL,\dots,\zeta_N^\RL, \u_1^\RL,\dots,\u_N^\RL)^\top\in L^\infty(0,T;H^{s}(\RR^d))^{N(1+d)}$ a unique strong solution to~\eqref{RL} with initial data
\[ \zeta_n^\RL\id{t=0}=\zeta_n^\init \quad  ( n=2,\dots,N) \ \quad \text{ and } \quad \u_n^\RL\id{t=0}= \u_n^\init-\delta^{-1}\Pi\w^\init  \quad  ( n=1,\dots,N),\]
where $\delta\eqdef\sum_{n=1}^N \delta_n$ is the total depth, $\w^\init\eqdef\sum_{n=1}^N(\delta_n+\zeta_n^\init-\zeta_{n+1}^\init)\u_n^\init$ with convention $\zeta_1^\init=\zeta_{N+1}^\init=0$ and $\Pi\eqdef\nabla\Delta^{-1}\nabla\cdot$ the orthogonal projection onto irrotational vector fields.
\item $\r^{-1}\zeta_1^{\rm ac},\w^{\rm ac}\in C^0(\into{T};H^{s}(\RR^d))^{1+d}$ a unique strong solution to~\eqref{acoustic-intro} with
\[ \zeta_1^{\rm ac}\id{t=0}=\zeta_1^\init\quad \text{ and } \quad \w^{\rm ac}\id{t=0}=\Pi \w^\init.\]
\end{itemize}
Moreover, one has for any $0\leq s'<s$,
\[ \Norm{\U-\U^{\rm app}}_{L^\infty(0,T;H^{s'}(\RR^d))}\to 0 \qquad (\r\to0),\]
where $\U^{\rm app}\eqdef (\r^{-1}\zeta_1^{\rm ac}+\r p^\RL,\zeta_2^\RL,\dots,\zeta_N^\RL,\u_1^\RL+\delta^{-1}\Pi \w^{\rm ac}, \dots,\u_n^\RL+\delta^{-1}\Pi\w^{\rm ac} \big)^\top$.
\end{Theorem}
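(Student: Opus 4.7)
The plan is to combine well-posedness of the two limit systems, a consistency argument identifying $\U^{\rm app}$ as an approximate solution to \eqref{FS-U-mr}, and a stability estimate relying on the same singular-limit-friendly symmetrizer used for Theorem~\ref{T.WP}; the main analytical difficulty will lie in handling the resonant interactions between fast acoustic waves and slow rigid-lid dynamics, which I would resolve via dispersive decay estimates for the rescaled wave group on $\RR^2$.

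First, well-posedness of the limit systems. Eliminating the Lagrange multiplier $p$ in \eqref{RL} via the Poisson equation recalled in the introduction turns the system into a quasilinear symmetrizable hyperbolic system in $(\zeta_2,\dots,\zeta_N,\u_1,\dots,\u_N)$; the prescribed initial velocities $\u_n^\RL\id{t=0}=\u_n^\init-\delta^{-1}\Pi\w^\init$ satisfy the total-volume divergence-free constraint at $t=0$ up to $O(\r^2)$ free-surface corrections, which suffices to propagate it, and well-posedness in $H^s$ then follows by Friedrichs-Kato theory analogously to the proof of Theorem~\ref{T.WP}. The acoustic system \eqref{acoustic-intro} is a linear constant-coefficient wave equation, trivially well-posed.

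The second step is consistency. Plugging $\U^{\rm app}$ into \eqref{FS-U-mr}, the cancellation of the singular term $\r^{-2}\nabla\zeta_1^{\rm ac}$ against $\partial_t(\delta^{-1}\Pi\w^{\rm ac})$ is built into \eqref{acoustic-intro}, and the subleading $\r\nabla p^\RL$ recovers the pressure gradient of \eqref{RL}; the residual decomposes as $\R^{\rm app}=\r R_1+R_2$, where $R_1$ is uniformly bounded in $L^\infty(0,T;H^{s-1})$ and $R_2$ collects bilinear interactions between the acoustic profiles $(\r^{-1}\zeta_1^{\rm ac},\w^{\rm ac})$ and the rigid-lid component. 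The initial mismatch $\V\id{t=0}\eqdef\U^\init-\U^{\rm app}\id{t=0}$ reduces to $\r p^\RL\id{t=0}$ in the first component only, hence is $O(\r)$ in $H^s$. Applying to $\V\eqdef\U-\U^{\rm app}$ the same energy method used for Theorem~\ref{T.WP} (valid because $\U^{\rm app}$ also enjoys uniform-in-$\r$ $H^s$ bounds) yields, for any $0\leq s'<s$ and uniformly in $\r\in(0,\r_0)$ on $\into{T}$,
\begin{equation*}
\frac{\dd}{\dd t}\Abs{\V}_{H^{s'}}^2 \;\leq\; C\bigl(\Abs{\V}_{H^{s'}}^2+\Abs{\V}_{H^{s'}}\Abs{\R^{\rm app}}_{H^{s'}}\bigr),
\end{equation*}
so by Gronwall it suffices to prove $\Abs{\R^{\rm app}}_{L^1(0,T;H^{s'})}\to 0$. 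The $\r R_1$ contribution is trivial, reducing the question to controlling $R_2$.

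Controlling $R_2$ is the crux of the argument, since the acoustic profiles are uniformly bounded but do not decay pointwise as $\r\to 0$: they oscillate at time-frequency $\sim\r^{-1}$. In dimension $d=2$, the rescaled wave propagator $e^{\pm i t\sqrt{\delta}\,\abs{\nabla}/\r}$ satisfies Strichartz inequalities whose constants improve as $\r\to 0$ at the price of an $\varepsilon$-loss of derivatives: for a suitable admissible pair $(p,q)$ with $p<\infty$ and $q>2$, one obtains $\Abs{\zeta_1^{\rm ac}}_{L^p(0,T;L^q)}+\Abs{\Pi\w^{\rm ac}}_{L^p(0,T;L^q)}=o(1)$ as $\r\to 0$. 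Combined with H\"older's inequality in time, a tame product estimate in $H^{s'}$, and the uniform $L^\infty_t H^s_x$ bound on the rigid-lid part, this gives $\Abs{R_2}_{L^1(0,T;H^{s'})}\to 0$, hence the claimed convergence. This dispersive mechanism explains both the restriction $d=2$ and the loss of regularity ($s'<s$) in the convergence statement.
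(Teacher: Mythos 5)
Your overall architecture coincides with the paper's: a slow component built from the rigid-lid solution, a fast component solving the acoustic system, a consistency computation, a stability estimate based on the uniform-in-$\r$ energy estimates, and Strichartz decay in $d=2$ to kill the couplings involving the fast mode (note these include fast--fast self-interactions, not only fast--slow ones). There are, however, two genuine gaps.

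First, the well-posedness of the rigid-lid system. You assert that after eliminating $p$ via the Poisson equation, \eqref{RL} becomes a quasilinear symmetrizable hyperbolic system to which Friedrichs--Kato theory applies. It does not: $\nabla p$ is a nonlocal order-one operator of the unknowns (the Lagrange multiplier of the constraint $\nabla\cdot\sum_i h_i\u_i=0$), so the reduced system is of incompressible-Euler type rather than hyperbolic, and obtaining energy estimates compatible with the constraint is itself a nontrivial task (this is precisely the content of the cited two-layer results of Guyenne--Lannes--Saut and Bresch--Renardy). The paper deliberately sidesteps this: existence of $\U^\RL$ is an \emph{output} of the singular limit, obtained by applying Propositions~\ref{P.weak-convergence} and~\ref{P.convergence-WP} to free-surface solutions with well-prepared data, and uniqueness is proved at the very end by turning each rigid-lid solution into an approximate free-surface solution (multiplying the pressure by $\r$) and applying the $L^2$ stability estimate of Proposition~\ref{P.energy-estimate-L2}. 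As written, your first step is unsupported, and your proposal never addresses uniqueness of the rigid-lid solution.

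Second, the stability estimate. The uniform energy estimates of Propositions~\ref{P.energy-estimate-L2} and~\ref{P.energy-estimate-Hs} hold for the system written in the variables $(\r^{-1}\zeta_1,\zeta_2,\dots,\v_2,\dots,\w)$ obtained through~\eqref{UtoV}; only in these variables does one have $\Norm{\B^x[\V_1]-\B^x[\V_2]}\leq\r\,C\norm{\V_1-\V_2}$ (estimate~\eqref{C-estimates-3}), which is what makes the source term $\frac1\r\big(\B^x[\V^{\rm app}]-\B^x[\V]\big)\partial_x\V^{\rm app}$ uniformly bounded. Writing the difference equation directly for $\U-\U^{\rm app}$ and invoking ``the same energy method'' fails, since the matrices $\A^x[\U]$ in~\eqref{def-A} contain genuinely singular entries in the shear velocities. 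Moreover, the paper runs the stability estimate in $L^2$ only and then deduces $H^{s'}$ convergence for $s'<s$ by logarithmic convexity of Sobolev norms against the uniform $H^s$ bounds; this interpolation, not an $\varepsilon$-loss in Strichartz, is the source of the loss of regularity in the statement. The direct $H^{s'}$ Gronwall inequality you display would require commutator estimates at level $s'$ that you do not supply and that are unnecessary.
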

\begin{Remark}\label{R.small-surface}
As mentioned in the introduction, our hypotheses contain a smallness assumption on the initial deformation of the surface, namely $\zeta_1\id{t=0}=\O(\r)$. This assumptions is natural so as to balance the contributions in the (preserved in time) energy:
\[ E \eqdef  \frac12  \int_{\RR^d} \frac{\gamma_1}{\r^2}|\zeta_1|^2+\sum_{n=2}^N r_n |\zeta_n|^2+\sum_{n=1}^N \gamma_n h_n |\u_n|^2.\]
Without this assumption, the flow possesses a strongly nonlinear barotropic component, and energy methods yield a well-posedness theory over a small time-domain, $t\in\into{T}$, $T^{-1}=\O(\r^{-1})$; see Proposition~\ref{P.WP-naive} and Remark~\ref{R.naive}, below. On this timescale, the baroclinic component do not evolve, so that all the dynamics is described by the barotropic component (asymptotically as $\r\to0$).
\end{Remark}
\begin{Remark}
The requirement $\r\in(0,\r_0)$ does not lose in generality in Theorem~\ref{T.WP}: the case of non-small $\r$ follows from the standard well-posedness theory of quasilinear systems, as proved by Monjarret in~\cite{Monjarreta} and stated in Proposition~\ref{P.WP-naive}, below.
\end{Remark}
\begin{Remark} Our proof does not rely on, but rather provides, the existence and uniqueness of strong solutions of the limit (rigid-lid) system. In that respect, one may see the free-surface system~\eqref{FS-U-mr} as a penalized model for~\eqref{RL} relaxing the rigid-lid constraint. Sharper well-posedness results for the rigid-lid system in the two-layer case and without the Boussinesq approximation are provided in~\cite{GuyenneLannesSaut10,BreschRenardy11}. 
\end{Remark}
\begin{Remark}
Theorem~\ref{T.convergence} is restricted to $d=2$ because we use dispersive decay estimates on rapidly propagating acoustic waves in order to control nonlinear coupling effects between the fast and slow modes. In the case of dimension $d=1$, and provided that the initial data is sufficiently localized in space, we justified in~\cite{Duchene14a} a similar mode decomposition of the flow, by making use of the different spatial support of each mode after small time. Proposition~4.4 therein, together with Proposition~\ref{P.energy-estimate-Hs} in the present work, offer a convergence between the exact and the approximate solution with rate $\O(\r)$. The same convergence rate holds in the case of dimension $d=2$ and well-prepared initial data, in the sense of Proposition~\ref{P.well-prepared initial data}.

In the following, for the sake of simplicity, we limit our study to the case of dimension $d=2$, although we find it more telling to keep the notation $d$. The proof of Theorem~\ref{T.WP} is easily adapted to the case of dimension $d=1$.
\end{Remark}
\begin{Remark}
One could add, without any additional difficulty, a uniformly bounded and order-zero term to the system, so as to take into account for instance the Coriolis force, atmospheric pressure variations, or bottom topography. Notice however that these terms should be of size $\O(\r)$ in the evolution equation for $\zeta_1$; in particular, only small topography may be dealt with using directly our strategy. Similarly, except in the one-layer case where the the component due to Coriolis effect is an anti-symmetric perturbation of a symmetric system, one cannot allow a rapid rotation such as in the quasi-geostrophic regime, which would correspond here to $\Ro\approx \r$ where $\Ro$ is the Rossby number; see~\cite{EmbidMajda96,ReznikZeitlinBen01}. 
\end{Remark}
\begin{Remark}
Our results are valid for arbitrary $N$, but not uniformly. In particular, we cannot control the dependence of $\r_0,\nu_0$ as $N$ grows. Thus our strategy cannot be adapted to study the system in the limit $N\to\infty$, corresponding to the physically relevant situation of continuous stratification. A similar shortcoming was already noticed and discussed by Ripa in \cite{Ripa91}.
\end{Remark}

\subsection{Discussion and strategy}

A well-posedness result on system~\eqref{FS-U-mr} is stated and proved in~\cite{Monjarreta}. It follows from a standard analysis on quasilinear systems since a symbolic symmetrizer may be exhibited (see Section~\ref{S.WP-naive} below). However, due to the presence of singular components in the system, the {\em a priori} maximal time of existence of the solutions may be bounded from below only  as $T_{\max}\gtrsim\r$ using this method. Such a result is unsuitable for our purpose as the time interval shrinks to zero in the considered limit, and is inconsistent with oceanographic observations of large amplitude internal waves propagating over long distances; see \eg \cite{OstrovskyStepanyants89} and references therein.

In order to go beyond this analysis and provide a time of existence of solutions uniformly bounded from below with respect to $\r$ small, we need to take advantage of some additional structural properties of the system. This structure is put to light by a suitable change of variable, which we describe below. 

Let us introduce the shear velocities, $\v_i$, and the total horizontal momentum, $\w$, as follows:
\begin{equation} \label{UtoV} \w\eqdef  \sum_{n=1}^N h_n \u_n \quad \text{ and }\quad  \v_i\eqdef \gamma_{i}\u_{i}-\gamma_{i-1} \u_{i-1}  \quad (i=2,\dots,N) ; \end{equation}
so that, conversely, for any $n=1,\dots,N$,
\begin{equation} \label{VtoU}\u_n=\frac{\gamma_n^{-1}}{ \sum_{i=1}^N \gamma_i^{-1} h_i} \left(\w+\sum_{j=2}^N \alpha_{n,j}\v_j\right) \quad \text{ with } \alpha_{n,j}=
\begin{cases}
\sum_{i=1}^{j-1} \gamma_i^{-1} h_i& \text{ if } j\leq n,\\
-\sum_{i=j}^N \gamma_i^{-1} h_i& \text{ otherwise}.
\end{cases}
\end{equation}

One may rewrite system~\eqref{FS-U-mr} using the new variables as follows:
\begin{equation}\label{FS-V-mr}
\left\{ \begin{array}{l}
\dsp\partial_t(\r^{-1}\zeta_1)+\frac1\r\nabla\cdot\w= 0,\\ \\
\dsp\partial_t \zeta_n + \sum_{i=n}^N\nabla\cdot\left(h_i\u_i\right) = 0,  \\  \\
\dsp\partial_t \v_n  +r_n\nabla \zeta_n  + \gamma_n (\u_n\cdot \nabla)\u_n - \gamma_{n-1}  (\u_{n-1}\cdot \nabla)\u_{n-1} = \z, \\ \\
\dsp\partial_t \w+\left( \sum_{j=1}^N \gamma_{j}^{-1} h_j\right) \frac{\gamma_1}{1-\gamma_1}\nabla\zeta_1+\sum_{i=2}^N \left(\sum_{j=i}^N \gamma_{j}^{-1} h_j\right) r_i\nabla\zeta_i+\sum_{i=1}^N   \nabla\cdot \big(h_i \u_i \otimes \u_i\big)=\z,
\end{array} \right.  \hspace{-2cm}(n=2,\dots,N) 
\end{equation}
where $\nabla\cdot (h_i \u_i \otimes \u_i)\eqdef \left(\begin{smallmatrix}
\partial_x(h_i |u^x_i|^2)+\partial_y (h_i u^x_iu^y_i)\\
\partial_x(h_i  u^x_iu^y_i)+\partial_y (h_i |u^y_i|^2)
\end{smallmatrix}\right)$, and $ \u_i=(u_i^x,u_i^y)^\top$ are meant as the expressions in terms of $(\zeta_1,\dots\zeta_N,\v_2,\dots,\v_N,\w)^\top$ given in~\eqref{VtoU}.
\medskip

The above change of variables may be seen as an approximate normal form allowing to decouple the slow and fast components of the flow. Indeed,
since
\[\sum_{j=1}^N \gamma_j^{-1}h_j=\sum_{j=1}^N\delta_j+\r(\r^{-1}\zeta_1)+ \O(\r^2),\]
one sees immediately that the singular terms appear only as linear components on the evolution equations for $\r^{-1}\zeta_1$ and $\w$ ---or more precisely $\Pi\w$--- and involve only $\r^{-1}\zeta_1$ and $\Pi\w$. In other words, the leading-order terms form a system of rapidly propagating acoustic waves in $\r^{-1}\zeta_1,\w$:
\[
\left\{ \begin{array}{l}
\dsp\partial_t(\r^{-1}\zeta_1)+\frac1\r\nabla\cdot\w= 0,\\ 
\partial_t \w+ \frac1\r \left(\sum_{j=1}^N\delta_j \right) \nabla(\r^{-1}\zeta_1) =\O(1).
\end{array} \right.
\]
The remainder contains quasilinear components depending on both the fast ($\r^{-1}\zeta_1,\Pi\w$) and slow ($\zeta_2,\dots,\zeta_n,\v_2,\dots,\v_n,(\Id-\Pi)\w$) variables, so we need to consider the full system of equations~\eqref{FS-V-mr} in order to obtain the desired uniform energy estimates. 

Let us now, for the sake of simplicity, restrict our discussion to the case of dimension $d=1$. From the above, we may rewrite the system~\eqref{FS-V-mr} as
\begin{equation}\label{eq.V}
\partial_t\V + \frac1\r \B[\V]\partial_x\V=\partial_t\V + \frac1\r \L\partial_x\V+ \C[\V]\partial_x\V=0
\end{equation}
where $\V\eqdef (\r^{-1}\zeta_1,\zeta_2,\dots,\zeta_N,v_2,\dots,v_N,w)^\top$, so that $\partial_t\V + \frac1\r \L\partial_x\V=0$ represents the above acoustic wave system for the fast variables, while $\ker(\L)=\vect(\zeta_2,\dots,\zeta_N,v_2,\dots,v_N)$; and $\C[\V]$ contains lower-order (in terms of $\r$) and coupling terms.

We shall make use of the fact that one can construct a ``good'' symmetrizer of the system under the form~\eqref{eq.V}, namely we exhibit real, positive-definite matrices $\T[\V]$ such that $\T[\V]=(\T[\V])^\top$ and $\T[\V]\B[\V]=(\T[\V]\B[\V])^\top$, and satisfying the decomposition
\[ \T[\V]=\T_{(0)}+\T_{(1)}[\Pis\V]\Pis+\O(\r),\]
where $\Pis$ is the orthogonal projection onto $\ker(\L)$, the slow variables. Indeed, one obtains an energy estimate by taking the $L^2$ inner-product of the equation with $\T[\V]\V$, which only requires to estimate
\[ \norm{ \partial_x ( \frac1\r \T[\V]\B[\V]) }_{L^\infty}+\norm{ \partial_t ( \T[\V]) }_{L^\infty}.\]
Using that $\L$ and $\T_{(0)}$ are constant operators and that $\Pis\partial_t \V = - \Pis \C[\V]\partial_x\V=\O(1)$, we see that the above are estimated uniformly with respect to $\r$ small; thus we have a uniform control of the $L^2$ norm. The corresponding $H^s$ estimate with $s>d/2+1$ does not bring additional difficulties, using that $\L$ commutes with the Fourier multiplier ${\sf \Lambda}^s\eqdef (1-\Delta)^{s/2}\Id$.

There remains to understand why such symmetrizer exists for our system~\eqref{eq.V}. One could check, after tedious calculations, that the explicit one provided by Monjarret in~\cite{Monjarreta} (after applying the congruent transformation associated with the change of variables) satisfies the necessary hypotheses, but we offer in Appendix~\ref{S.spectral} an alternative and more robust construction. We show that, provided that $\V$ satisfies~\eqref{C.depth-mr},\eqref{C.hyp-mr}, then $\frac1\r \B[\V]$ has $2N$ real and distinct eigenvalues. Two of them are asymptotically equivalent to ${\lambda_\pm \approx \pm \frac1\r \sqrt{\sum_{j=1}^N\delta_j }}$ as $\r\to0$ while the other ones are uniformly bounded with respect to $\r$ small. The spectral projection corresponding to the former converge towards the projections onto the eigenspaces corresponding the two non-trivial eigenvalues of $\L$. Using the scale separation between the eigenvalues, one shows that the spectral projections corresponding to the latter are uniformly bounded, and that they converge as $\r\to 0$ towards independent, rank-one projections onto subspaces of $\ker(\L)$. Our symmetrizer is then, classically,
\[ \T[\V]\eqdef \sum_{j=1}^{2N} (\P_j[\V])^\top \P_j[\V]\]
where $\P_j[\V]$ is the spectral projection onto the $j^{\rm th}$ eigenspace of $\B[\V]$. That $\T[\V]$ enjoys the desired properties follows, using standard perturbation theory~\cite{Kato95}, from the fact that $\L$ is constant, $\ker(\L)\oplus_\perp\ran(\L)=\RR^{2N}$ and the strong scale separation between $|\lambda_\pm |\gtrsim 1/\r$ and uniformly bounded eigenvalues.

An additional difficulty arises in the situation of horizontal dimension $d=2$, due to the fact that the symmetrizer of the system ---which is constructed from the symmetrizer in dimension $d=1$ and a rotational invariance property---  is only a symbolic symmetrizer, as opposed to symmetrizers in the sense of Friedrichs. Thus we rely on para-differential calculus, but extra care must be given to ``lower order terms'' in the sense of regularity, which may effectively hurt our energy estimates if they are not uniformly bounded with respect to $\r$. As a matter of fact, we use that one can construct an explicit operator (defined as a Fourier multiplier) which symmetrizes the linear, singular contributions of the system, and use  para-differential calculus only on the next order components in terms of $\r$.

Given the uniform (with respect to $\r$ small) energy estimates, the large time well-posedness (Theorem~\ref{T.WP}) follows from the standard theory for quasilinear hyperbolic systems. The convergence results (Theorem~\ref{T.convergence} as well as additional assertions in Section~\ref{S.convergence}) proceed from rather standard techniques in the study of singular systems; see references below.

\subsection{Related earlier results}

In~\cite{Duchene14a}, the author studied the so-called inviscid bilayer Saint-Venant (or shallow water) system in the limit of small density contrast. The change of variables allowing for uniform energy estimates was exhibited therein, and convergence towards a solution of the rigid-lid limit, as well as a second-order approximation, was deduced in the case of well-prepared initial data. This work is therefore an extension of these results to the situation of (horizontal) dimension $d=2$, ill-prepared initial data as well as arbitrary number of layers.

As already noticed in the aforementioned work, our problem has many similarities with the (two-dimensional) incompressible limit for Euler equations, as studied initially in~\cite{KlainermanMajda81,KlainermanMajda82,BrowningKreiss82}. As a matter of fact, if we consider only one layer of fluid, then our problem corresponds exactly to a special case of the isentropic incompressible limit, and we recover the results of Ukai~\cite{Ukai86} and Asano~\cite{Asano87}. We will not detail the very rich history of results concerning this problem (we let the reader to \cite{Gallagher05,Masmoudi07,Alazard08} for comprehensive reviews) but rather aim at pointing out similarities and differences of our situation.

Let us first recall the two-dimensional isentropic Euler equations for inviscid, barotropic fluids:
\begin{equation}\label{Euler-isentropique}
\left\{ \begin{array}{l}
\dsp\partial_t\rho +\nabla\cdot (\rho\v )= 0,\\ 
\partial_t (\rho \v)+ \frac1{\epsilon^2} \nabla P + \nabla\cdot (\rho \v\otimes\v )=0,
\end{array} \right.
\end{equation}
where $P=P(\rho)$ is a given pressure law, $\rho>0$ is the density, $\v$ the velocity, and $\epsilon$ the dimensionless Mach number. As claimed above, one recognizes exactly~\eqref{FS-V-mr} in the one layer setting ($N=1$), by setting $P(\rho)\propto\rho^2$, and identifying 
\[ \rho \longleftrightarrow h_1=\delta_1+\zeta_1\ , \quad \rho\v \longleftrightarrow \w\quad \text{ and } \quad \epsilon \longleftrightarrow \r.\]
Of course, the difficulty in our case is that, as one considers additional layers of fluids, these equations are coupled with additional equations on additional unknowns, so as to produce a full quasilinear system. Since these additional equations are non-singular with respect to the small parameter, it is tempting to compare our situation with the incompressible limit for the non-isentropic Euler
equations, where~\eqref{Euler-isentropique} is coupled with an additional evolution equation for the entropy, $S$:
\[ \partial_t S+(\v\cdot\nabla)S=0\]
and $P=P(\rho,S)$. 

Our situation, however, is quite different. This can be seen from the fact that, contrarily to the non-isentropic Euler equations, the linearized system is balanced, in the sense that a small perturbation of the ``slow'' component of the reference state induces only a small deviation for the solution. In other words, using the notation of the above discussion, the symmetrizer of the non-isentropic Euler equations does not satisfy $\T[\V]=\T_{(0)}+\T_{(1)}[\Pis\V]\Pis+\O(\r)$ but only $\T[\V]=\T_{(0)}+\T_{(1)}[\Pis\V]+\O(\r)$; see discussion in~\cite{MetivierSchochet01}. 
This additional property in our situation allows in particular to straightforwardly deduce $H^s$ energy estimates from the corresponding $L^2$ energy estimate; and to obtain the strong convergence result of Theorem~\ref{T.convergence} simply from dispersive estimates on the ``acoustic'' component of the flow, as originally carried out by  Ukai~\cite{Ukai86} and Asano~\cite{Asano87} in the isentropic case.

In order to deal with this situation, Métivier and Schochet~\cite{MetivierSchochet01} (see also~\cite{BreschMetivier10}) rely on the fact that their system enjoys a diagonal block structure and that the symmetrizer commutes exactly with the singular operator, denoted $\frac1\r\L\partial_x $ in the above discussion. Roughly speaking, this means that we may control the compressible and isentropic component of the flow independently of the acoustic component by simply projecting onto $\ker(\L)$. Since such assumptions are only approximately satisfied in our situation, our system is rather related to the ``$\r$-balanced'' (and not ``$\r$-diagonal'') systems studied by Klainerman and Majda~\cite{KlainermanMajda81}, although we do not restrict ourselves to well-prepared initial data.

In this spirit, our proofs rely as little as possible on explicit calculations, thus we expect that the general strategy may be successfully applied to other situations and other frameworks, such as the ones presented in~\cite{ParisotVila15}. This is why we use mostly in the following the terminology of ``fast {\em vs} slow'' mode/component instead of ``barotropic {\em vs} baroclinic'' which is more relevant to our initial oceanographic motivation (see~\cite{Gill82}); or ``acoustic {\em vs} incompressible'' associated with Euler equations.

\subsection{Outline of the paper}

In Section~\ref{S.WP-naive}, we recall that our quasilinear system admits an explicit (symbolic) symmetrizer, which yields immediately a well-posedness theory for~\eqref{FS-U-mr}, for any fixed $\r>0$. 

In Section~\ref{S.normal-form}, we exhibit the structural properties enjoyed by our system, after the change of variables~\eqref{UtoV},\eqref{VtoU}, and which allow the uniform (with respect to $\r$ small) energy estimates provided in Section~\ref{S.energy-estimates-L2} (Proposition~\ref{P.energy-estimate-L2}) and Section~\ref{S.energy-estimates-Hs} (Proposition~\ref{P.energy-estimate-Hs}). 

We deduce in Section~\ref{S.WP} the large-time well-posedness result of Theorem~\ref{T.WP}. Additionally, we show in Proposition~\ref{P.well-prepared initial data} that an assumption of well-prepared initial data is propagated by the flow for positive times.

Section~\ref{S.convergence} is dedicated to convergence results. We first state in Proposition~\ref{P.weak-convergence} a weak convergence result for the solutions of the free-surface system as $\r\to0$. As in the incompressible limit for Euler equations, the convergence cannot be strong uniformly in time, due to the rapidly propagating fast mode. However, we show in Proposition~\ref{P.convergence-WP} that this small initial layer in time vanishes in the case of well-prepared initial data, and then characterize the defect for general initial data in Section~\ref{S.convergence-strong}, yielding Theorem~\ref{T.convergence}.

Appendix~\ref{S.notations} contains a description of some notations used throughout the text, as well as a short review of standard results concerning product and commutator estimates in Sobolev spaces (Section~\ref{S.product}) and Bony's paradifferential calculus (Section~\ref{S.para}). 

Finally, Appendix~\ref{S.spectral} is dedicated to some results on the eigenstructure of our system, which are used in Section~\ref{S.normal-form}.

\bigskip

\section{Standard well-posedness theory}\label{S.WP-naive}

In this section, we fix $\r>0$ and construct an explicit (symbolic) symmetrizer of~\eqref{FS-U-mr}. This offers a well-posedness result similar to Theorem~\ref{T.WP}, although non-uniformly with respect to $\r$ small. This analysis has been provided by Monjarret in~\cite{Monjarreta}; we recall it here for the sake of completeness and because the objects defined therein will be of later use.
\medskip

Let us first rewrite the free-surface system~\eqref{FS-U-mr} with a matricial, compact formulation. Provided that $\U\eqdef (\r^{-1}\zeta_1,\zeta_2,\dots,\zeta_N,u^x_1,\dots,u^x_N,u^y_1,\dots,u^y_N)^\top\in C^0(\into{T};H^s(\RR^d))^{N(1+d)}$ with ${s>d/2+1}$, one can rewrite~\eqref{FS-U-mr} equivalently as
\begin{equation}\label{FS-U-compact}
\partial_t \U+\frac1\r\A^x[\U]\partial_x \U+\frac1\r\A^y[\U]\partial_y \U= \z ,
\end{equation}
with
\begin{equation} \label{def-A}\frac1\r\A^x[\U]\eqdef
\left(\begin{MAT}(b){c:c:c}
{\sf M}(u^x) & {\sf H}(\zeta) &{\sf 0}_N\\:
{\sf R} & \D(u^x)&{\sf 0}_N \\:
{\sf 0}_N &{\sf 0}_N& \D(u^x) \\
\end{MAT}\right), \qquad \frac1\r\A^y[\U]\eqdef
\left(\begin{MAT}(b){c:c:c}
{\sf M}(u^y)  &{\sf 0}_N& {\sf H}(\zeta) \\:
{\sf 0}_N  & \D(u^y) &{\sf 0}_N\\:
{\sf R}  &{\sf 0}_N& \D(u^y) \\
\end{MAT}\right).
\end{equation}
Here and thereafter, we heavily make use of the block structure of $N(1+d)$-by-$N(1+d)$ matrices. We denote by ${\sf 0}_N$ the $N$-by-$N$ matrix with only zero entries, and for $u\in \RR^N$, $\D(u)=\diag(u_1,\dots,u_N)$. Moreover, ${\sf M},{\sf H}$ are upper-triangular and ${\sf R}$ is lower-triangular and are defined by
\[ {\sf M}(u)_{n,i}= 
\begin{cases} 
\frac1\r (u_i-u_{i-1})  & \text{ if } 1=n<i, \\
u_i-u_{i-1}  & \text{ if } 2\leq n < i, \\
 u_i  & \text{ if } i = n, \\
0 & \text{ if } i< n ,
 \end{cases} 
 \qquad 
\begin{array}{l}  {\sf H}_{n,i}= 
\begin{cases} 
\frac1\r h_i  & \text{ if } n=1, \\
h_i & \text{ if } i\geq n \geq 2,\\
0 & \text{ if } i< n ,
 \end{cases} \\ \\
  {\sf R}_{n,i}= 
\begin{cases} 
\frac1\r \frac{\gamma_1}{\gamma_n}  & \text{ if } i =1, \\
\frac{r_i}{\gamma_n} & \text{ if } n \geq i \geq 2,\\
0 & \text{ if }  n<i.
 \end{cases}
 \end{array}
 \]

\begin{Proposition}[\cite{Monjarreta},~Theorem 2.8]\label{P.WP-naive}
Let $s>d/2+1$ and $\U^\init\in H^s(\RR^d)^{N(1+d)}$ be such that
\begin{equation}\label{C.depth-U}
\forall n\in\{1,\dots,N\},\quad  \inf_{\x\in\RR^d} h_n = \inf_{\x\in\RR^d} \big(\delta_n+\zeta_n-\zeta_{n+1} \big)  > h_0>0,
\end{equation}
where we recall that $\zeta_{N+1}=0$ by convention. 
One can set $\nu_0=C(\m,h_0^{-1})>0$ such that if $\U^\init$ satisfies additionally
\begin{equation}\label{C.hyp-U}
\forall n\in\{2,\dots,N\}, \quad \sup_{\x\in\RR^d} \norm{u_n^x-u_{n-1}^x}+ \norm{u_n^y-u_{n-1}^y} < \nu_0^{-1},
\end{equation}
then there exists a unique $T_{\max}>0$ and $\U\in C^0(\into{T_{\max}};H^s(\RR^d)^{N(1+d)})$, maximal solution to~\eqref{FS-U-compact} and $\U\id{t=0}=\U^\init$.

Moreover, if $T_{\max}<\infty$, then $\norm{\U}_{W^{1,\infty}}(t)\to \infty $ ($t\uparrow T_{\max}$) or one of the hyperbolicity conditions~\eqref{C.depth-U},\eqref{C.hyp-U} ceases to be true.
\end{Proposition}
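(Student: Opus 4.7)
The plan is to construct a symbolic symmetrizer for the system~\eqref{FS-U-compact} and then invoke the standard local well-posedness theory for quasilinear symmetric hyperbolic systems. Concretely, I would look for a smooth family of Hermitian, positive-definite $N(1+d)$-by-$N(1+d)$ matrices $\S[\U](\bm{\xi})$, depending smoothly on $\U$ and homogeneously of degree $0$ in $\bm{\xi}=(\xi_1,\xi_2)\in\RR^d\setminus\{\z\}$, such that $\S[\U](\bm{\xi})\bigl(\xi_1\A^x[\U]+\xi_2\A^y[\U]\bigr)$ is symmetric, with positivity bounds controlled only by $\m$, $h_0^{-1}$ and $\nu_0$.

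The construction proceeds in two steps. In dimension $d=1$, I would exploit the block structure of $\A^x[\U]$ displayed in~\eqref{def-A} and seek a block-diagonal symmetrizer $\S[\U]=\diag(\S_\zeta,\S_u)$. The coupling condition $\S_\zeta\,{\sf H}(\zeta)={\sf R}^\top\S_u$ together with the required symmetry of $\S_\zeta\,{\sf M}(u)$ and $\S_u\D(u)$ reduces to a finite family of algebraic identities whose solvability is guided by the physical energy recalled in Remark~\ref{R.small-surface}. Positive-definiteness of the resulting $\S[\U]$ then follows from the depth condition $h_n\geq h_0$ and a sufficient smallness of the shears $|\u_n-\u_{n-1}|$, which is exactly what~\eqref{C.hyp-U} provides once $\nu_0$ is chosen large enough depending on $(\m,h_0^{-1})$. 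In dimension $d=2$, the invariance of~\eqref{FS-U-compact} under horizontal rotations (with the obvious action on the components of $\u_n$) allows one to produce a symbolic symmetrizer by conjugating the one-dimensional construction by the rotation sending $\bm{\xi}$ to $|\bm{\xi}|\e_1$; note that in $d=2$ the resulting $\S[\U](\bm{\xi})$ is generically non-constant in $\bm{\xi}$, so one leaves the framework of Friedrichs symmetrizers and enters that of symbolic ones.

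Equipped with $\S[\U](\bm{\xi})$, I would proceed through Bony's paradifferential calculus (Appendix~\ref{S.para}): replacing $\frac1\r\A^x[\U]\partial_x+\frac1\r\A^y[\U]\partial_y$ by its paraproduct counterpart produces an error of zeroth order in regularity, and symmetrising with the paradifferential operator associated with $\S[\U]$ yields an energy functional equivalent to $\Norm{\U}_{H^s}^2$ satisfying $\frac{\dd}{\dd t}\Norm{\U}_{H^s}^2\leq C(\m,h_0^{-1},\nu_0,\Norm{\U}_{W^{1,\infty}})\Norm{\U}_{H^s}^2$. A standard approximation scheme (Friedrichs mollification or a parabolic regularisation followed by compactness) then provides existence of a strong solution on a maximal interval $\into{T_{\max}}$; uniqueness follows from a comparable $L^2$ estimate on the difference of two solutions, and the blow-up alternative from the usual continuation principle: as long as $\Norm{\U}_{W^{1,\infty}}$ stays bounded and~\eqref{C.depth-U},\eqref{C.hyp-U} remain valid (possibly with slightly degraded $h_0,\nu_0^{-1}$), $\Norm{\U}_{H^s}$ cannot blow up.

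The main obstacle I anticipate is the quantitative hyperbolicity statement: showing that~\eqref{C.depth-U},\eqref{C.hyp-U} (for a suitable $\nu_0=C(\m,h_0^{-1})$) guarantee that the symbol $\xi_1\A^x[\U]+\xi_2\A^y[\U]$ admits a smooth, uniformly positive-definite symmetrizer. By the rotational reduction, this boils down to a spectral analysis of the one-dimensional matrix in~\eqref{def-A}, which I would handle by perturbation around the zero-shear state (whose spectrum is computed explicitly in Appendix~\ref{S.spectral}), using the depth condition to separate its $2N$ real eigenvalues and the small-shear condition to persist this separation. Once this is in place, the remaining steps are essentially coordinate-free, which is the viewpoint needed in Section~\ref{S.normal-form} to upgrade these non-uniform constants into the $\r$-uniform energy estimates.
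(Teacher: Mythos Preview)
Your overall architecture is right and matches the paper: reduce to $d=1$ via the rotational invariance~\eqref{A-rot}, build a symmetrizer for $\A^x[\U]$ alone, promote it to a symbolic symmetrizer $\S[\U,\xi]=\Q(\xi)^{-1}\S^x[\Q(\xi)\U]\Q(\xi)$, and then invoke the standard theory for symmetrizable quasilinear systems.

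The gap is in your one-dimensional step. The block-diagonal ansatz $\S=\diag(\S_\zeta,\S_u)$ cannot work. Since $\D(u)$ is diagonal, the condition ``$\S_u\D(u)$ symmetric for all $u$'' forces $\S_u$ itself to be diagonal; the coupling condition $\S_\zeta\,{\sf H}={\sf R}^\top\S_u$ is then indeed solved by $\S_\zeta=\D(\t r)$, $\S_u=\D(\gamma)\D(h)$ (this is exactly the energy Hessian in the zero-velocity state). But ${\sf M}(u)$ is genuinely upper-triangular with off-diagonal entries $u_i-u_{i-1}$, and for any diagonal $\S_\zeta$ the product $\S_\zeta\,{\sf M}(u)$ has nonzero $(1,i)$ entries while the $(i,1)$ entries vanish; so $\S_\zeta\,{\sf M}(u)$ is never symmetric once a shear is present. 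The remedy is not to correct $\S_\zeta$ but to allow an off-diagonal block: the paper's symmetrizer~\eqref{def-Sx} carries the coupling
\[
\L=-\D[\gamma(u^x+K^x)]\,{\sf \Delta}\,\D[\e_\r],
\]
between the $\zeta$ and $u^x$ rows, and one checks directly (using ${\sf M}(u)=\D(\e_\r)^{-1}{\sf \Delta}^{-1}\D(u){\sf \Delta}\D(\e_\r)$, ${\sf H}=-\D(\e_\r)^{-1}{\sf \Delta}^{-1}\D(h)$, ${\sf R}=-\D(\gamma^{-1})({\sf \Delta}^{-1})^\top\D(\t r)\D(\e_\r)^{-1}$) that $\S^x[\U]\A^x[\U]$ is symmetric. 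Positive-definiteness is then obtained not by a spectral perturbation argument but much more cheaply: choosing the free parameter $K^x=-u_1^x$ makes the off-diagonal block $\L$ proportional to the shears $u_n^x-u_1^x$, so that under~\eqref{C.depth-U}--\eqref{C.hyp-U} the matrix $\S^x[\Q(\xi)\U]$ is strictly diagonally dominant, hence positive-definite uniformly in $\xi$. This is where the precise form of the hyperbolicity condition (control on shears only, with $\nu_0=C(\m,h_0^{-1})$) comes from.

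Your fallback via spectral analysis of $\A^x[\U]$ around the zero-shear state would also yield a symmetrizer (this is essentially the construction~\eqref{def-T} carried out in Appendix~\ref{S.spectral} for $\B^x$), but the paper reserves that heavier machinery for the $\r$-uniform estimates of Section~\ref{S.normal-form}; for the present non-uniform statement the explicit~\eqref{def-Sx} and diagonal dominance suffice, and the well-posedness with blow-up criterion is then just a citation of the standard theory.
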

\begin{Remark} \label{R.naive}
Naively following the above strategy and keeping track of the dependency of constants with respect to the parameter $\r$ would yield a disappointing lower bound on the maximal time of existence, namely $T_{\max}^{-1}\lesssim \r^{-1}$, even when the initial surface deformation is assumed small as in Theorem~\ref{T.WP}. It is the main result of this work that, in this case, the well-posedness theory and uniform energy estimates can be extended to a non-shrinking time domain as $\r\to0$.

The conditions~\eqref{C.depth-U},\eqref{C.hyp-U} ensure that the symmetrizer, defined in~\eqref{def-Sx} below, is coercive. It is therefore a sufficient condition for hyperbolicity. Except in very specific cases, one has very few information on the domain of hyperbolicity; see discussion in Appendix~\ref{S.spectral}. 

The fact that~\eqref{C.hyp-U} requires a control on the shear velocities, $\u_n-\u_{n-1}$ rather than on the velocities themselves is allowed by some freedom in the choice of the symmetrizer. Notice in particular that the Hessian of the conserved energy yields a natural symmetrizer for our system of conservation laws in the case of irrotational flows, but it does not enjoy the desired property.

The ability to construct symmetrizers depending strongly on the shear velocities but only weakly on a background velocity (or on the total volume flux)  is also essential for us to obtain results outside of the scope of well-prepared initial data; see~\cite{Duchene14a} for an analysis where this property is not used.
\end{Remark}
\begin{proof}
Let us introduce the symbol of~\eqref{FS-U-compact}:
\[\A[\U,\xi]\eqdef \xi^x \A^x[\U]+\xi^y \A^y[\U].\]
An important property of our system is that is satisfies {\em rotational invariance}~\cite[Section~1.2]{Monjarreta}. More precisely, one can easily check that
\begin{equation}\label{A-rot}
\forall \xi=(\xi^x,\xi^y)^\top\in\RR^d\setminus\{\z\}, \qquad \A[\U,\xi]=\Q(\xi)^{-1}\A^x[\Q(\xi)\U]\Q(\xi)|\xi|,
\end{equation}
where
\begin{equation} \label{def-Q}
\Q(\xi)= \frac1{|\xi|}
\left(\begin{MAT}(b){c:c:c}
|\xi|{\sf I}_N & {\sf 0}_N & {\sf 0}_N \\:
 {\sf 0}_N & \xi^x {\sf I}_N & \xi^y{\sf I}_N  \\:
{\sf 0}_N &-\xi^y{\sf I}_N &  \xi^x {\sf I}_N \\
\end{MAT}\right).
\end{equation}
where ${\sf I}_N$ is the $N$-by-$N$ identity matrix and $|\xi|\eqdef (|\xi^x|^2+|\xi^y|^2)^{1/2}$. Obviously, $\Q(\xi)$ is homogeneous of degree $0$ in $\xi$, with entries in $C^\infty(\RR^d\setminus\{\z\})$ and is orthogonal:  $\Q(\xi)^{-1}=\Q(\xi)^\top$.

This allows to construct a (symbolic) symmetrizer of the system from a (Friedrichs) symmetrizer of $\A^x$ alone.
More precisely, define
\begin{equation} \label{def-Sx}
\S^x[\U]= \left(\begin{MAT}(b){c:c:c}
{\sf D}(\t r) & {\sf L}^\top& {\sf 0}_N \\:
{\sf L}&  {\sf D(\gamma)}{\sf D}(h) & {\sf 0}_N  \\:
{\sf 0}_N &{\sf 0}_N &  {\sf D(\gamma)}{\sf D}(h)  \\
\end{MAT}\right)
\end{equation}
with $\D(\t r)\eqdef\diag(\gamma_1,r_2,\dots,r_N)$ and
${\sf L}\eqdef-{\sf D}[\gamma (u^x+K^x)]{\sf \Delta}{\sf D}[\e_\r] $ where $\D(\e_\r)\eqdef\diag(\r,1,\dots,1)$, 
\[  {\sf \Delta} \eqdef\begin{pmatrix} -1 & 1 & & \text{\large (0)}  \\   & \ddots &  \ddots  & \\ &  & \ddots & 1  \\ \text{\large (0)}  &&  & -1 \end{pmatrix}\ , \qquad {\sf \Delta}^{-1} =\left(\begin{MAT}(b){ccc}
 -1 & & \text{\large (-1)} \\   & \ddots &  \\ \text{\large (0)} &  & -1 \\
 \end{MAT} \right)\ ,\]
and the parameter $K^x$ may be chosen freely in $\vect(u^x)$.

That $\S^x[\U]\A^x[\U]$ is symmetric is easily checked once we notice the following identities in~\eqref{FS-U-compact}: ${\sf M}(u)=\D(\e_\r)^{-1}{\sf \Delta}^{-1}\D(u){\sf \Delta}\D(\e_\r)$, ${\sf H}=-\D(\e_\r)^{-1}{\sf \Delta}^{-1}\D(h)$ and ${\sf R}=-\D(\gamma^{-1})({\sf \Delta}^{-1})^\top\D(\t r)\D(\e_\r)^{-1}$. 

It follows that $\S[\U,\xi]\eqdef \Q(\xi)^{-1}\S^x[\Q(\xi)\U]\Q(\xi)$ is a symbolic symmetrizer of~\eqref{FS-U-compact}, since
\[ \S[\U,\xi]\A[\U,\xi]=\Q(\xi)^\top\S^x[\Q(\xi)\U]\A^x[\Q(\xi)\U]\Q(\xi)|\xi| \]
is obviously symmetric. Finally, one may choose $K^x=-u^x_1$ for instance, so that as soon as $\U$ satisfies~\eqref{C.depth-U},\eqref{C.hyp-U} with $\nu_0$ sufficiently small, then $\S^x[\Q(\xi)\U]$ is strictly diagonally dominant with positive diagonal entries, and therefore definite positive, uniformly for any $\xi\in\RR^d\setminus\{\z\}$.

Proposition~\ref{P.WP-naive} is now a direct consequence of the standard theory of first-order hyperbolic quasilinear systems; see~\cite{M'etivier08} for instance.
\end{proof}

\section{Uniform energy estimates}\label{S.energy-estimates}
In this section, we establish uniform (with respect to $\r$ small) energy estimates, which are the essential ingredients in the proof of our main results. We first exhibit in Section~\ref{S.normal-form} some properties of the system obtained after the change of variables~\eqref{UtoV}-\eqref{VtoU}, and which allow the $L^2$ energy estimate of Proposition~\ref{P.energy-estimate-L2} (Section~\ref{S.energy-estimates-L2}) and in turn the $H^s$ energy estimate of Proposition~\ref{P.energy-estimate-Hs} (Section~\ref{S.energy-estimates-Hs}).

\subsection{A new formulation}\label{S.normal-form}

In what follows, we fix $h_0>0$ and always assume that
\begin{equation}\label{C.depth}  \forall n\in\{1,\dots,N\},\quad  h_n \eqdef \delta_n+\zeta_n-\zeta_{n+1}\geq h_0>0
\end{equation} 
(recall the convention: $\zeta_{N+1}=0$). More precisely, we work with $\VV\subset \RR^{N(1+d)}$
defined by
\[
\VV=\Big\{  (\r^{-1}\zeta_1,\zeta_2,\dots,\zeta_N,\star,\dots,\star)^\top\in \RR^{N(1+d)} \text{ such that~\eqref{C.depth} holds}\Big\}.
\]
Consequently, the change of variables~\eqref{UtoV}-\eqref{VtoU} define self-homeomorphisms between
\begin{align*} \U&\eqdef (\r^{-1}\zeta_1,\zeta_2,\dots,\zeta_N,u^x_1,\dots,u^x_N,u^y_1,\dots,u^y_N)^\top\in \VV \quad \text{ and }\\
\V&\eqdef (\r^{-1}\zeta_1,\zeta_2,\dots,\zeta_N,v^x_2,\dots,v^x_N,w^x,v^y_2,\dots,v^y_N,w^y)^\top\in \VV,
\end{align*}
and we denote
\[ F:\left\{
\begin{array}{ccl}
\VV & \to & \VV\\
\V  & \mapsto & \U\eqdef F(\V)
\end{array}\right. \quad \text{ and } \quad F^{-1}:\left\{\begin{array}{ccl}
\VV & \to & \VV\\
\U  & \mapsto & \V\eqdef F^{-1}(\U)
\end{array}\right. .\]

Consider the Jacobian matrix associated to $F^{-1}$:
\begin{equation} \label{def-J-1} 
\J^{F^{-1}}[\U]\eqdef
\left(\begin{MAT}(b){c:c:c}
{\sf I}_N & {\sf 0}_N& {\sf 0}_N \\:
{\sf C}(u^x)&  {\sf \Delta}_h(\zeta) & {\sf 0}_N  \\:
{\sf C}(u^y) &{\sf 0}_N & {\sf \Delta}_h(\zeta)  \\
\end{MAT}\right) 
\end{equation}
where
\[ 
{\sf \Delta}_h(\zeta)=\left(\begin{MAT}{cccc}
-\gamma_1 &\gamma_2  & & \text{\large (0)} \\
 & \ddots &  \ddots & \\
  \text{\large (0)} &  & -\gamma_{N-1} & \gamma_N \\:
  h_1 & \cdots & \cdots & h_N \\
\end{MAT}\right)
\quad \text{ and } \quad 
{\sf C}(u)=\left(\begin{MAT}(r){cccc} 
& & & \\
 &  & \text{\large (0)} & \\
 &  &  &  \\:
\r u_1 & u_2-u_1 & \cdots & u_N-u_{N-1} \\
  \end{MAT}\right).
\]
From the inverse function theorem, one has
\begin{equation} \label{def-J} 
\J^{F}[\V]= (\J^{F^{-1}}[F(\V)])^{-1} 
\end{equation}
and
\[
 (\J^{F^{-1}}[\U])^{-1} =
\left(\begin{MAT}(b){c:c:c}
{\sf I}_N & {\sf 0}_N& {\sf 0}_N \\:
-{\sf \Delta}_h(\zeta)^{-1} {\sf C}(u^x)&  {\sf \Delta}_h(\zeta)^{-1} & {\sf 0}_N  \\:
-{\sf \Delta}_h(\zeta)^{-1} {\sf C}(u^y) &{\sf 0}_N & {\sf \Delta}_h(\zeta)^{-1}  \\
\end{MAT}\right) 
\]
where (as is easier seen directly from~\eqref{VtoU}),
\[ \left({\sf \Delta}_h(\zeta)^{-1}\right)_{n,j} =\frac{\gamma_n^{-1}}{ \sum_{i=1}^N \gamma_i^{-1} h_i} \alpha_{n,j+1}\quad \text{ with } \alpha_{n,j}=
\begin{cases}
1& \text{ if }  j=N+1 ,\\
\sum_{i=1}^{j-1} \gamma_i^{-1} h_i& \text{ if } 2\leq j\leq n,\\
-\sum_{i=j}^N \gamma_i^{-1} h_i& \text{ otherwise}.
\end{cases} \]

Applying the change of variables $\U=F(\V)$ in~\eqref{FS-U-compact} yields, for sufficiently regular functions (see Lemma~\ref{L.UtoV} below),
\begin{equation}\label{FS-V-compact}
\partial_t \V+\frac1\r\B^x[\V]\partial_x \V+\frac1\r\B^y[\V]\partial_x \V=\z,
\end{equation}
which we can identify with~\eqref{FS-V-mr}, and where $\B^x[\V],\B^y[\V]$ are explicitly given in terms of $\A^x[\V],\A^y[\V]$ (displayed in~\eqref{def-A}) and $\J^F[\V] $ through
\begin{equation}\label{AtoB}
\B^x[\V] = (\J^F[\V])^{-1}\A^x[F(\V)] \J^F[\V] \quad \text{ and } \quad \B^y[\V] = (\J^F[\V])^{-1}\A^y[F(\V)] \J^F[\V] .
\end{equation}

Finally, we denote by $\Pif,\Pifx$ the orthogonal projection onto the ``fast'' variables, namely 
\begin{equation} \label{def-Pi} 
\Pif \eqdef \left(\begin{MAT}(c){c:c:c}
\D(\e_1)& {\sf 0}_N& {\sf 0}_N \\:
 {\sf 0}_N&\D(\e_N)&  {\sf 0}_N\\:
  {\sf 0}_N& {\sf 0}_N& \D(\e_N)\\
\end{MAT}\right), \quad \Pifx \eqdef \left(\begin{MAT}(c){c:c:c}
\D(\e_1)& {\sf 0}_N& {\sf 0}_N \\:
 {\sf 0}_N&\D(\e_N)&  {\sf 0}_N\\:
  {\sf 0}_N& {\sf 0}_N& {\sf 0}_N\\
\end{MAT}\right),
\end{equation}
with $\D(\e_1)\eqdef  \diag(1,0,\dots,0)$ and $\D(\e_N)\eqdef  \diag(0,\dots,0,1)$.

The following result is now straightforward.
\begin{Lemma}\label{L.J}
Let $\U=F(\V)\in \VV$, and recall the definition of $\Q(\xi)$ in~\eqref{def-Q}.
Then 
\begin{equation}\label{J-rot}
\forall \xi\in\RR^d\setminus\{\z\}, \qquad \Q(\xi)F(\V)=F(\Q(\xi)\V)\quad \text{ and } \quad \Q(\xi)\J^F[\V]= \J^F[\Q(\xi)\V]\Q(\xi).
\end{equation}
Moreover, $\J^F[\V],(\J^F[\V])^{-1}:\VV\to \M_{N(1+d)}(\RR)$ are well-defined and smooth and satisfy
\begin{subequations}\label{J-estimates}
\begin{gather}\label{J-estimate-1}
 \norm{F(\V)}\leq C(\m,h_0^{-1},\norm{\V}) \norm{\V}, \qquad \norm{F^{-1}(\U)}\leq C(\m,h_0^{-1},\norm{\U}) \norm{\U};\\\label{J-estimate-2}
\Norm{\J^F[\V]} \leq C(\m,h_0^{-1},\norm{\V}) \quad ; \quad \Norm{(\J^F[\V])^{-1}} \leq  C(\m,h_0^{-1},\norm{\V}) ;\\\label{J-estimate-3}
\Norm{\J^F[\V]-\J^F[\Pis\V]} \leq \r\ C(\m,h_0^{-1},\norm{\V}) \norm{\V}.
\end{gather}
\end{subequations}
\end{Lemma}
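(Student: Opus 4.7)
The plan is to read each claim off the explicit formulas~\eqref{UtoV}--\eqref{VtoU} and~\eqref{def-J-1}, with the only non-trivial task being the extra factor of $\r$ in~\eqref{J-estimate-3}.

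For the rotational invariance~\eqref{J-rot}, I observe that $F^{-1}$ is defined by the vector identities $\w=\sum_n h_n\u_n$ and $\v_i=\gamma_i\u_i-\gamma_{i-1}\u_{i-1}$ whose coefficients depend only on $\zeta$, so that $F^{-1}$ acts identically on the $x$- and $y$-velocity components and trivially on $\zeta$. Since $\Q(\xi)$ is the identity on the $\zeta$-block and applies the same $2\times 2$ rotation diagonally to each velocity pair $(u_n^x,u_n^y)$, the relation $\Q(\xi)\circ F^{-1}=F^{-1}\circ\Q(\xi)$ follows at once; applying $F$ on both sides gives the first identity of~\eqref{J-rot}, and differentiating it in $\V$ yields the second.

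For smoothness and the bounds~\eqref{J-estimate-1}--\eqref{J-estimate-2}, I note that $F^{-1}$ is polynomial, so $\J^{F^{-1}}[\U]$ is polynomial in the components of $\U$. Invertibility reduces to that of ${\sf \Delta}_h(\zeta)$, and the explicit formula given just after~\eqref{def-J} shows that its inverse is rational with denominator $\sum_i\gamma_i^{-1}h_i\geq h_0\sum_i\gamma_i^{-1}>0$ under~\eqref{C.depth}. Hence $\J^F=(\J^{F^{-1}})^{-1}$ is smooth on $\VV$, and the requested bounds follow by direct inspection of the entries.

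The main work is~\eqref{J-estimate-3}. Using the resolvent identity
\[
\J^F[\V]-\J^F[\Pis\V]\ =\ \J^F[\V]\,\bigl(\J^{F^{-1}}[F(\Pis\V)]-\J^{F^{-1}}[F(\V)]\bigr)\,\J^F[\Pis\V],
\]
together with~\eqref{J-estimate-2}, it suffices to show $\Norm{\J^{F^{-1}}[F(\V)]-\J^{F^{-1}}[F(\Pis\V)]}\leq \r\,C(\m,h_0^{-1},\norm{\V})\,\norm{\V}$. The heart of the argument, and the only real obstacle, is the algebraic observation that $\J^{F^{-1}}[\U]$ is a polynomial in the $h_n$'s (via ${\sf \Delta}_h$) and in the combinations $\r u_1^{x,y}$ and $u_n^{x,y}-u_{n-1}^{x,y}$ for $n\geq 2$ (via the unique non-zero row of ${\sf C}$). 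Passing from $F(\V)$ to $F(\Pis\V)$ changes each of these inputs by at most $\O(\r\norm{\V})$: among the $h_n$'s only $h_1$ is affected, and by $\r(\r^{-1}\zeta_1)$; the entries $\r u_1^{x,y}$ inherit an $\r$-prefactor from their $\w$-dependence; and, using~\eqref{VtoU}, the $\w$-contribution to $u_n^{x,y}-u_{n-1}^{x,y}$ carries the factor $\gamma_n^{-1}-\gamma_{n-1}^{-1}=\O(\r^2)$ since $\gamma_j=1-\r^2\sum_{i>j}r_i$, while its $\v$-contribution varies only through $h_1$. The bound then follows from the Lipschitz estimate of the polynomial map $\J^{F^{-1}}$ on bounded sets.
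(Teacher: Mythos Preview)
Your proof is correct and follows essentially the same approach as the paper. The paper works directly with the explicit block formula for $\J^F=(\J^{F^{-1}})^{-1}$, estimating $\Norm{{\sf \Delta}_h(\zeta)^{-1}-{\sf \Delta}_h(\Pis\zeta)^{-1}}$ and $\Norm{{\sf C}(u(\V))-{\sf C}(u(\Pis\V))}$ separately, whereas you obtain the same conclusion via the resolvent identity and an estimate on $\J^{F^{-1}}$ itself; the key algebraic observations---that $\zeta_1$ enters only through $h_1=\delta_1-\zeta_2+\r(\r^{-1}\zeta_1)$, that $\r u_1$ carries an explicit $\r$ prefactor, and that the $\w$-contribution to $u_n-u_{n-1}$ comes with a factor $\gamma_n^{-1}-\gamma_{n-1}^{-1}=\O(\r^2)$ (the paper phrases this equivalently as $u_i-u_{i-1}=v_i+(1-\gamma_i)u_i-(1-\gamma_{i-1})u_{i-1}$ with $1-\gamma_i=\O(\r^2)$)---are identical.
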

\begin{proof}
Only the last estimate requires an explanation. Remark that the first variable of $\V$ contributes to ${\sf \Delta}_h(\zeta)^{-1}$ only through $h_1=\delta_1-\zeta_2+\r\frac{\zeta_1}{\r}$. Thanks to the $\r$ prefactor, one deduces that
\[ \Norm{{\sf \Delta}_h(\zeta)^{-1} -{\sf \Delta}_h(\Pis\zeta)^{-1} } \leq  \r\ C(\m,h_0^{-1},\norm{\zeta}) \norm{\zeta_1} , \]
where we denoted $\Pis\zeta\eqdef (0,\zeta_2,\dots,\zeta_N)^\top$. Similarly,  by~\eqref{UtoV}, one has
\[ u_i-u_{i-1}=v_i+ (1-\gamma_i) u_i +  (1-\gamma_{i-1}) u_{i-1} \]
and, by definition, $1-\gamma_i=\r^2 \sum_{j=i+1}^N r_j$, so that
\[ \Norm{\C(u(\V))-\C(u(\Pis \V)) } \leq  \r\ C(\m,h_0^{-1},\norm{\V}) \norm{\V} ,\]
where $u(\V)$ represents the velocity variables of $F^{-1}(\V)$, as given by~\eqref{VtoU}.
The result is now clear.
\end{proof}

\begin{Lemma}\label{L.B}
The functions $\B^x,\B^y:\VV\to \M_{N(1+d)}(\RR)$ are well-defined and smooth and satisfy the rotational invariance
\begin{equation}\label{B-rot}
\forall \xi\in\RR^d\setminus\{\z\}, \qquad  \B[\V,\xi]\eqdef \xi^x \B^x[\V]+\xi^y \B^y[\V]=\Q(\xi)^{-1}\B^x[\Q(\xi)\V]\Q(\xi)|\xi|;
\end{equation}
as well as the following estimates:
\begin{subequations}\label{B-estimates}
\begin{gather}
\Norm{\B^x[\V]}\leq C(\m,h_0^{-1},\norm{\V}), \quad \Norm{\Pisx\B^x[\V]}\leq \r\  C(\m,h_0^{-1},\norm{\V});
 \label{B-estimates-1}\\
\Norm{\B^x[\V_1]-\B^x[\V_2]}\leq \r\ C(h_0^{-1} ,\m,\norm{\V_1},\norm{\V_2})\norm{\V_1-\V_2}.
 \label{B-estimates-4}
\end{gather}
\end{subequations}
\end{Lemma}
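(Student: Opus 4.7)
The approach to Lemma~\ref{L.B} is largely computational, relying on the explicit formulation~\eqref{FS-V-mr} for the estimates and on the previously established identities for $F$ and $\J^F$ for the symmetry. Smoothness of $\B^x,\B^y:\VV\to\M_{N(1+d)}(\RR)$ is inherited from Lemma~\ref{L.J}, since $\J^F[\V]$ and $(\J^F[\V])^{-1}$ are smooth on $\VV$ and $\A^x[\U],\A^y[\U]$ depend polynomially on $\U=F(\V)$ by~\eqref{def-A}. The rotational invariance~\eqref{B-rot} follows by a direct calculation: starting from $\B[\V,\xi]=(\J^F[\V])^{-1}\A[F(\V),\xi]\J^F[\V]$, applying~\eqref{A-rot} to $\A[F(\V),\xi]$, and then using both identities of~\eqref{J-rot} together with the orthogonality $\Q(\xi)^{-1}=\Q(\xi)^\top$, the expression rearranges into $\Q(\xi)^{-1}\B^x[\Q(\xi)\V]\Q(\xi)|\xi|$.

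The heart of the argument is a row-by-row inspection of $\frac1\r\B^x[\V]$, which I identify with the matrix multiplying $\partial_x\V$ in the reformulation~\eqref{FS-V-mr}. The key structural observation is that $\r^{-1}$-singular coefficients appear only in the rows indexed by the two fast-in-$x$ variables $\r^{-1}\zeta_1$ and $w^x$ (the range of $\Pifx$). Concretely, in the $\r^{-1}\zeta_1$ row the only nonzero entry of $\frac1\r\B^x$ is the constant $\frac1\r$ in the $w^x$ column; and in the $w^x$ row, the singular coefficient $\frac{1-\r^2}{\r^2}(\sum_j\gamma_j^{-1}h_j)$ multiplying $\partial_x\zeta_1$ produces, after restoring $\r$ and using $\zeta_1=\r\cdot(\r^{-1}\zeta_1)$, the entry $(1-\r^2)(\sum_j\gamma_j^{-1}h_j)$ in the $(w^x,\r^{-1}\zeta_1)$ position of $\B^x$. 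All other coefficients in the $w^x$ row, and every coefficient in the slow rows (indexed by $\zeta_n$, $v^x_n$, $v^y_n$ for $n\geq 2$, and by $w^y$), are uniformly bounded by $C(\m,h_0^{-1},\norm{\V})$ thanks to~\eqref{VtoU} together with the depth bound $h_n\geq h_0$. Multiplying by $\r$ immediately yields both $\Norm{\B^x[\V]}\leq C(\m,h_0^{-1},\norm{\V})$ and $\Norm{\Pisx\B^x[\V]}\leq \r\,C(\m,h_0^{-1},\norm{\V})$.

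For the Lipschitz estimate~\eqref{B-estimates-4}, my plan is to check that each $\V$-variation of each entry of $\B^x[\V]$ carries an additional $\r$ factor. In the slow rows this is immediate, since the entries are already $O(\r)$ and depend smoothly on $\V$ through quantities uniformly bounded by Lemma~\ref{L.J} and~\eqref{VtoU}. In the $w^x$ row the only entry of $O(1)$ magnitude is $(w^x,\r^{-1}\zeta_1)=(1-\r^2)\sum_j\gamma_j^{-1}h_j$: its derivative with respect to $\r^{-1}\zeta_1$ is $(1-\r^2)\gamma_1^{-1}\r=O(\r)$ because $h_1=\delta_1+\r\cdot(\r^{-1}\zeta_1)-\zeta_2$; its derivative with respect to $\zeta_n$ for $n\geq 2$ is $(1-\r^2)(\gamma_n^{-1}-\gamma_{n-1}^{-1})=O(\r^2)$ because $\gamma_n=1-\r^2\sum_{k>n}r_k$; and its derivatives in the velocity variables vanish. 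The other entries of the $w^x$ row are $O(\r)$ to begin with, so the same strategy (combined with the product rule and the bounds on $\u_n$ in terms of $\V$ from~\eqref{VtoU}) yields $O(\r)$ Lipschitz constants there as well.

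The main obstacle is precisely this systematic $\r$-bookkeeping: one must verify that no $\V$-derivative of any entry of $\B^x$ ever produces an $O(1)$ contribution. This is guaranteed by the two structural facts $\gamma_n-1=-\r^2\sum_{k>n}r_k$ and $\zeta_1=\r\cdot(\r^{-1}\zeta_1)$, which reflect exactly why the change of variables~\eqref{UtoV}-\eqref{VtoU} puts the system in a form amenable to uniform energy estimates. Once the entrywise bounds are collected, the mean-value theorem together with the estimates of Lemma~\ref{L.J} yields~\eqref{B-estimates-4}, completing the proof.
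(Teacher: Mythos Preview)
Your proof is correct and follows essentially the same approach as the paper: both derive~\eqref{B-rot} from~\eqref{A-rot} and~\eqref{J-rot} via~\eqref{AtoB}, and both obtain~\eqref{B-estimates} by reading off the structure of~\eqref{FS-V-mr} row-by-row, using that the only $\V$-dependent $O(1)$ entry of $\B^x[\V]$ is $(1-\r^2)\sum_j\gamma_j^{-1}h_j$ in the $(w^x,\r^{-1}\zeta_1)$ position, whose variation is $O(\r)$ precisely because $h_1=\delta_1+\r(\r^{-1}\zeta_1)-\zeta_2$ and $\gamma_n^{-1}-\gamma_{n-1}^{-1}=O(\r^2)$. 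The paper phrases the latter computation as the expansion $\sum_j\gamma_j^{-1}h_j=\sum_j\delta_j+\r(\r^{-1}\zeta_1)+\r^2 r_2(\V)$, which is equivalent to your entrywise differentiation.
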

\begin{proof}
Identity~\eqref{B-rot} is deduced from~\eqref{AtoB},~\eqref{A-rot} and~\eqref{J-rot}:
\begin{align*} 
 \xi^x \B^x[\V]+\xi^y \B^y[\V]&=  (\J^F[\V])^{-1} (\xi^x \A^x[F(\V)] + \xi^y \A^y[F(\V)] ) \J^F[\V] \\
 &= (\J^F[\V])^{-1}  \Q(\xi)^{-1}\A^x[\Q(\xi)F(\V) ]\Q(\xi) \J^F[\V]|\xi|\\
 &= \Q(\xi)^{-1}(\J^F[\Q(\xi)\V])^{-1}\A^x[F(\Q(\xi) \V) ]\J^F[\Q(\xi)\V]\Q(\xi)|\xi|\\
 &= \Q(\xi)^{-1}\B^x[\Q(\xi)\V]\Q(\xi)|\xi|.
 \end{align*}

Estimates~\eqref{B-estimates} may be deduced from identity~\eqref{AtoB} and the explicit expressions of $\A^x[F(\V)]$ and $\J^F[\V] ,\  (\J^F[\V])^{-1}$ given in~\eqref{def-A},\eqref{def-J},\eqref{def-J-1}.  They are also apparent when identifying~\eqref{FS-V-compact} with~\eqref{FS-V-mr}. In particular, one immediately sees that the evolution equations for $\zeta_2,\dots,\zeta_N,\v_2,\dots,\v_N$ are non-singular (for $\r$ small), and the singular term on the evolution equation for $w^y$ involves only $\partial_y\zeta_1$, so that
\[ \Norm{\frac1\r\Pisx \B^x[\V]}\leq   C(\m,h_0^{-1},\norm{\V}) \]
and
\[
\Norm{\frac1\r\Pisx\B^x[\V_1]-\frac1\r\Pisx\B^x[\V_2]}\leq \ C(h_0^{-1} ,\m,\norm{\V_1},\norm{\V_2})\norm{\V_1-\V_2}.
\]
The only singular terms (for $\r$ small) arise from the first and last equations, which read
\[ \partial_t(\r^{-1}\zeta_1)+\r^{-1}\nabla\cdot \w =0 \quad \text{ and } \quad   \partial_t \w+\left(\sum_{j=1}^N \gamma_j^{-1}h_j\right)\frac{\r\gamma_1}{1-\gamma_1} \nabla(\r^{-1}\zeta_1) =r_1^x[\V]\partial_x\V+r_1^y[\V]\partial_y\V ,\]
where $r_1^x,r_1^y$ are smooth and enjoy the same estimates as $\frac1\r\Pis \B^x$ above.
We now only need to remark that $\frac{\r\gamma_1}{1-\gamma_1}=\gamma_1\r^{-1}$ (by definition) and, since $1-\gamma_i=\r^2 \sum_{j=i+1}^N r_j$,
\[\sum_{j=1}^N \gamma_j^{-1}h_j=\sum_{j=1}^N h_j+\sum_{j=1}^N \frac{1-\gamma_j}{\gamma_j} h_j=\sum_{j=1}^N\delta_j+ \r (\r^{-1}\zeta_1)+\r^2 r_2(\V),\]
where, again, $r_2$ is smooth and uniformly estimated as above. Lemma~\ref{L.B} is now straightforward.
\end{proof}

Finally, let us provide some estimates on the symbolic symmetrizer of the system.
\begin{Lemma}\label{L.T}
There exists $\T^x:\VV\to \M_{N(1+d)}(\RR)$ a smooth function such that 
\[\forall \V\in \VV, \quad  (\T^x[\V])^\top=\T^x[\V]\quad \text{ and } \quad (\T^x[\V]\B^x[\V])^\top=\T^x[\V]\B^x[\V].\]
Moreover, one can set $\r_0^{-1},\nu=C(\m,h_0^{-1},\norm{\V})>0$ such that if $\r\in(0,\r_0)$ and $\V,\V_1,\V_2$ satisfy
\begin{equation}\label{C.hyp-V}
\forall n\in\{2,\dots,N\}, \quad \sup_{\x\in\RR^d} \norm{v^x_n} +\norm{v^y_n} < \nu^{-1}\ ,
\end{equation}
then one has the following estimates:
\begin{subequations}\label{T-estimates}
\begin{gather}
c_0 \Id \leq \T^x[\V] , \quad  \Norm{\T^x[\V]}\leq C_0, \quad \text{ with } c_0^{-1}=N(1+d),\ C_0=C(\m,h_0^{-1},\norm{\V});
\label{T-estimates-1}\\
\Norm{\T^x[\V_1]-\T^x[\V_2]}+\frac1\r \Norm{(\T^x[\V_1]-\T^x[\V_2])\Pifx}\leq  C(h_0^{-1} ,\m,\norm{\V_1},\norm{\V_2}) \norm{\V_1-\V_2};
\label{T-estimates-2}\\
 \Norm{\T^x[\V_1]-\T^x[\V_2]}\leq  C(h_0^{-1} ,\m,\norm{\V_1},\norm{\V_2}) \left(\norm{\Pis(\V_1-\V_2)}+\r \norm{\V_1-\V_2}\right) .
\label{T-estimates-3}
\end{gather}
\end{subequations}
\end{Lemma}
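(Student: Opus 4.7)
My plan is to build $\T^x$ from the spectral projections of $\B^x[\V]$, as suggested in the strategy discussion. By the spectral analysis of Appendix~\ref{S.spectral}, under~\eqref{C.hyp-V} and for $\r_0^{-1}=C(\m,h_0^{-1},\norm{\V})$ sufficiently large, $\B^x[\V]$ has $D\eqdef N(1+d)$ real simple eigenvalues $\lambda_1[\V],\dots,\lambda_D[\V]$: two of them, $\lambda_\pm[\V]$, satisfy $\lambda_\pm[\V]\sim\pm\r^{-1}\sqrt{\delta_1+\cdots+\delta_N}$ as $\r\to 0$, while the remaining $D-2$ stay in a bounded set, any two being separated by a gap depending only on $\m$, $h_0^{-1}$, $\norm{\V}$. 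Letting $\P_j[\V]$ denote the (generally oblique) spectral projection onto the $\lambda_j[\V]$-eigenspace, I set
\[ \T^x[\V]\eqdef\sum_{j=1}^D \P_j[\V]^\top\P_j[\V]. \]
Symmetry of $\T^x$ is immediate, and since $\P_j[\V]\B^x[\V]=\lambda_j[\V]\P_j[\V]$ with $\lambda_j\in\RR$, $\T^x[\V]\B^x[\V]=\sum_j\lambda_j\P_j^\top\P_j$ is symmetric as well.

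\textbf{Coercivity and routine bounds.} The completeness identity $\sum_{j=1}^D\P_j[\V]=\Id$, combined with Cauchy--Schwarz, gives for every $x\in\RR^D$
\[ \abs{x}^2=\Big|\sum_{j=1}^D\P_j[\V]x\Big|^2\leq D\sum_{j=1}^D\abs{\P_j[\V]x}^2=D\,\langle\T^x[\V]x,x\rangle, \]
which is the lower bound in~\eqref{T-estimates-1} with the announced $c_0=1/D=1/(N(1+d))$. The upper bound in~\eqref{T-estimates-1} and the non-sharp Lipschitz estimate in~\eqref{T-estimates-2} (without the $1/\r$ factor) follow from the smooth dependence of $\V\mapsto\P_j[\V]$ expressed by the contour representation $\P_j[\V]=\tfrac{1}{2\pi i}\oint_{\Gamma_j}(zI-\B^x[\V])^{-1}\,\dd z$, with $\Gamma_j$ a small loop around $\lambda_j[\V]$ chosen uniformly clear of the rest of the spectrum.

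\textbf{Sharp $\r$-dependent estimates.} The crux of the proof is the $\r^{-1}$ enhancement on $\Pifx$ in~\eqref{T-estimates-2} and the splitting~\eqref{T-estimates-3}. The decomposition established in the proof of Lemma~\ref{L.B} reads $\B^x[\V]=\r^{-1}\B_{-1}+\B_0[\V]$, where $\B_{-1}$ is \emph{constant}, satisfies $\B_{-1}=\Pifx\B_{-1}\Pifx$, and has spectrum $\{0,\pm\sqrt{\delta_1+\cdots+\delta_N}\}$; in particular $\ker(\B_{-1})=\ker(\B_{-1}^\top)=(\Id-\Pifx)\RR^D$ while the non-zero eigenvectors lie in $\Pifx\RR^D$. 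It follows from perturbation theory, using the scale separation $|\lambda_\pm|\gtrsim\r^{-1}$ versus the bounded slow eigenvalues, that $\P_\pm[\V]$ are $\O(\r)$-perturbations of the $\V$-independent spectral projections of $\B_{-1}$, whereas each slow projection takes the form $\P_k[\V]=(\Id-\Pifx)\hat\P_k[\V](\Id-\Pifx)+\O(\r)$. Consequently $\P_j[\V]\Pifx$ is, for every $j$, an $\O(\r)$ perturbation of a $\V$-independent operator, and~\eqref{B-estimates-4} upgrades this to
\[ \Norm{(\P_j[\V_1]-\P_j[\V_2])\Pifx}\leq \r\,C(\m,h_0^{-1},\norm{\V_1},\norm{\V_2})\,\norm{\V_1-\V_2}. \]
Substituting into $\T^x[\V_1]-\T^x[\V_2]=\sum_j\bigl((\P_j[\V_1]-\P_j[\V_2])^\top\P_j[\V_1]+\P_j[\V_2]^\top(\P_j[\V_1]-\P_j[\V_2])\bigr)$ and multiplying on the right by $\Pifx$ produces~\eqref{T-estimates-2}. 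The estimate~\eqref{T-estimates-3} follows along the same lines, splitting $\V_1-\V_2=\Pis(\V_1-\V_2)+\Pif(\V_1-\V_2)$ and using also that $\B_0[\V]-\B_0[\Pis\V]=\O(\r\norm{\V})$ (again from Lemma~\ref{L.B}).

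\textbf{Main obstacle.} The genuine technical difficulty sits in the contour integral near the fast eigenvalues: $\Gamma_\pm$ encircling $\lambda_\pm[\V]\sim\pm\r^{-1}$ has itself radius of order $\r^{-1}$, so na\"ive bounds on the resolvent are \emph{not} uniform in $\r$. One must rescale $z\mapsto z/\r$ and exploit the block structure $\B_{-1}=\Pifx\B_{-1}\Pifx$ together with the strong scale separation: after rescaling, the contour has radius $\O(1)$, $\B_0[\V]$ appears as an $\O(\r)$ perturbation of $zI$ on the slow block, and $(zI-\B^x[\V])^{-1}\Pifx=\O(\r)$ uniformly on $\Gamma_\pm$, its $\V$-dependence being confined to the slow components up to an additional $\O(\r)$ correction---precisely the gain required on the fast-variable columns of $\T^x$.
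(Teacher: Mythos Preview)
Your construction and overall strategy are exactly those of the paper: set $\T^x[\V]=\sum_j\P_j[\V]^\top\P_j[\V]$ over a full family of rank-one projections commuting with $\B^x[\V]$, and pull the estimates~\eqref{T-estimates} back to the projection estimates of Appendix~\ref{S.spectral} (specifically~\eqref{PV-1}--\eqref{PV-3} and~\eqref{PZ-3}). The coercivity argument with $c_0^{-1}=N(1+d)$ and the telescoping for~\eqref{T-estimates-2} are identical to the paper's.

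Two points deserve correction. First, your claim that $\B^x[\V]$ has $N(1+d)$ \emph{simple} eigenvalues is false when $d=2$: the $N$ ``trivial'' eigenvalues of Lemma~\ref{L.B-trivialeigen} are the velocities $u^x_n(\V)$, which can coincide with one another (or with slow eigenvalues). The paper sidesteps this by defining the associated $\P^0_n$ explicitly via~\eqref{def-P0} rather than as spectral projections; they remain smooth rank-one projections with $\P^0_n\P^0_m=0$ and $\B^x\P^0_n=\r u^x_n\P^0_n$ regardless of multiplicity, which is all the construction of $\T^x$ needs.

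Second, your ``main obstacle'' paragraph misplaces the genuine difficulty. After the rescaling $z\mapsto z/\r$ you describe, the fast projections $\P_{\pm1}$ become a textbook $\O(\r)$ perturbation of the (simple) nonzero eigenprojections of $\L$; this is the easy case. The hard case is the \emph{slow} projections $\P_{\pm n}$, $n\geq 2$: they split from the highly degenerate eigenvalue $0$ of $\L$, the contours around them (for $\B^x$) have radius $\O(\r)$, and the na\"ive Neumann series does not converge. The paper handles this first for $\Z\in\ZZ$ by reducing to a tridiagonal Sturm--Liouville problem and invoking Kato's reduction process for semisimple exceptional points, then perturbs from $\Z$ to $\V$ using the smallness~\eqref{C.hyp-V}. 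Your sketch ``$\P_k[\V]=(\Id-\Pifx)\hat\P_k[\V](\Id-\Pifx)+\O(\r)$'' is the correct target, but the mechanism producing it is not the one you describe.
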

\begin{proof}
One could define the symmetrizer as
$
\T^x[\V]=  (\J^F[\V])^\top \S^x[F(\V)] \J^F[\V],
$
where $\S^x[F(\V)]$ is the symmetrizer associated with $\A^x[F(\V)]$ and has been displayed in~\eqref{def-Sx}, and check the properties directly on this explicit symmetrizer. Some of the estimates, however, rely on delicate cancellations which have no obvious explanation using this method. This is why we find it more instructive to construct our symmetrizer using the spectral properties of our system, whose study we postpone to Appendix~\ref{S.spectral} for the sake of readability. It is proved in Lemmas~\ref{L.B-trivialeigen} and~\ref{L.B-eigen} that $\B^x[\V]$ has only real and semisimple eigenvalues:
\[ \frac1\r \B^x[\V] = \sum_{n=1}^N \mu_{n}[\V]  \P_{ n}[\V]+ \mu_{-n}[\V] \P_{-n}[\V] +u^x_n[\V]  \P^0_n[\V],\]
where $\mu_{\pm n}[\V] ,u^x_n[\V] \in \RR$, and $\P_{ \pm n}[\V], \P^0_n[\V]$ are rank-one spectral projections. We now define
\begin{equation}\label{def-T}
 \T^x[\V] = \sum_{n=1}^N (\P_{ n}[\V])^\top \P_{ n}[\V]+ (\P_{ -n}[\V])^\top  \P_{-n}[\V]+(\P^0_n[\V])^\top  \P^0_n[\V].
 \end{equation}
Indeed, $\T^x[\V]$ is obviously symmetric, and so is
\[ \frac1\r\T^x[\V]\B^x[\V]= \sum_{n=1}^N \mu_{n}[\V] (\P_{ n}[\V])^\top \P_{ n}[\V]+ \mu_{-n}[\V]  (\P_{ -n}[\V])^\top  \P_{-n}[\V] +u^x_n[\V]  (\P^0_n[\V])^\top  \P^0_n[\V],\]
and one has for any $\W\in\RR^{N(1+d)}$, 
\[  \big(\T^x[\V] \W,\W\big)=\sum_{n=1}^N \norm{ \P_{ n}[\V]\W}^2+\norm{  \P_{ -n}[\V]\W}^2+\norm{ \P^0_n[\V]\W}^2\geq \frac1{(N(1+d))^2} \norm{\W}^2.\]
The upper bound in~\eqref{T-estimates-1} follows from 
\[  \sum_{n=1}^N\Norm{\P_{ n}[\V]}+\Norm{\P_{ -n}[\V]}+\Norm{\P^0_{ n}[\V]} \leq C(\m,h_0^{-1},\norm{\V}),\] 
which is given by~\eqref{J-estimate-2} in Lemma~\ref{L.J} with~\eqref{def-P0} in Lemma~\ref{L.B-trivialeigen}, and~\eqref{PZ-1} in Lemma~\ref{L.B0-eigen} with~\eqref{PV-1}-\eqref{PV-3} in Lemma~\ref{L.B-eigen}. This proves~\eqref{T-estimates-1}.

Similarly,~\eqref{T-estimates-2} and~\eqref{T-estimates-3} are easily deduced from the explicit expression for $\P^0_n$ in~\eqref{def-P0}, and the estimates~\eqref{PV-1},\eqref{PV-2},\eqref{PV-3} as well as~\eqref{PZ-3}.
\end{proof}

We conclude this section by collecting the above information on the symbols of our operators:
\begin{Corollary}\label{C.prop}
Let $\V\in \VV$ satisfying~\eqref{C.hyp-V} and $\xi=(\xi^x,\xi^y)^\top\in\RR^d\setminus\{\z\}$, define
\begin{gather*}
 \B[\V,\xi]\eqdef \xi^x \B^x[\V]+\xi^y \B^y[\V]=\Q(\xi)^{-1}\B^x[\Q(\xi)\V]\Q(\xi)|\xi| ;\\
 \T[\V,\xi]\eqdef \Q(\xi)^{-1}\T^x[\Q(\xi)\V]\Q(\xi)  \quad \text{ and } \quad \P_{\rm f}(\xi)\eqdef \Q(\xi)^{-1} \Pifx \Q(\xi) . 
\end{gather*}
Then one has $(\T[\V,\xi])^\top=\T[\V,\xi]$, $(\T[\V,\xi]\B[\V,\xi])^\top=\T[\V,\xi]\B[\V,\xi]$ and the following estimates:
\begin{subequations}
\begin{gather}
\Norm{\B[\V,\xi]} \leq C_0\  |\xi| \quad \text{ and } \quad \Norm{(\Id-\P_{\rm f}(\xi))\B[\V,\xi]}\leq \r\  C_0\  |\xi|;
 \label{C-estimates-1}\\
 c_0 \Id \leq \T[\V,\xi]  \quad \text{ and } \quad   \Norm{\T[\V,\xi]}\leq C_0 ;
 \label{C-estimates-2}
 \end{gather}
 with $c_0^{-1}=N(1+d)$, $C_0=C(\m,h_0^{-1},\norm{\V})$; and for any $\V_1,\V_2\in\VV$ satisfying~\eqref{C.hyp-V}, one has
 \begin{align}
 \Norm{\B[\V_1,\xi]-\B[\V_2,\xi]} &\leq \r\ C_1\ \norm{\V_1-\V_2} \ |\xi|;
  \label{C-estimates-3} \\
 \Norm{\T[\V_1,\xi]\B[\V_1,\xi]-\T[\V_2,\xi]\B[\V_2,\xi]}&\leq  \r\ C_1\ \norm{\V_1-\V_2}\ |\xi|;
  \label{C-estimates-4}\\
  \Norm{\T[\V_1,\xi]-\T[\V_2,\xi]}&\leq  C_1\ \left(\norm{\Pis(\V_1-\V_2)}+\r \norm{\V_1-\V_2}\right);
 \label{C-estimates-5}\\
  \Norm{(\T[\V_1,\xi]-\T[\V_2,\xi])\P_{\rm f}(\xi)}&\leq  \r\ C_1\  \norm{\V_1-\V_2}\ .
  \label{C-estimates-6}
\end{align}
\end{subequations}
 with $C_1=C(h_0^{-1} ,\m,\norm{\V_1},\norm{\V_2})$.
\end{Corollary}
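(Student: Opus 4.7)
The plan is to transport each assertion from the corresponding $x$-direction statement of Lemma~\ref{L.T} (for $\T^x$) or Lemma~\ref{L.B} (for $\B^x$) by conjugation with $\Q(\xi)$. The key inputs are: $\Q(\xi)$ is orthogonal, hence preserves both the operator norm on $\M_{N(1+d)}(\RR)$ and the Euclidean norm on $\RR^{N(1+d)}$; $\Q(\xi)$ restricts to the identity on the $\zeta$-block, so $\Q(\xi)\V\in\VV$ whenever $\V\in\VV$; and the hyperbolicity condition~\eqref{C.hyp-V} is preserved under $\V\mapsto\Q(\xi)\V$ provided $\nu$ is taken a factor $\sqrt{2}$ smaller than in Lemma~\ref{L.T}, since the shear velocities at $\Q(\xi)\V$ are planar rotations (in the $(x,y)$-plane) of those at $\V$. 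With these remarks, the symmetry of $\T[\V,\xi]$ is immediate from $\Q(\xi)^\top=\Q(\xi)^{-1}$ and the symmetry of $\T^x[\Q(\xi)\V]$; using~\eqref{B-rot}, $\T[\V,\xi]\B[\V,\xi]=|\xi|\Q(\xi)^{-1}(\T^x[\Q(\xi)\V]\B^x[\Q(\xi)\V])\Q(\xi)$ is symmetric by the same Lemma.

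Next I would dispatch the norm and positivity estimates~\eqref{C-estimates-1}--\eqref{C-estimates-2}: the upper bounds transfer directly through conjugation; the lower bound $c_0\Id\leq \T[\V,\xi]$ comes from $(\T[\V,\xi]\W,\W)=(\T^x[\Q(\xi)\V]\Q(\xi)\W,\Q(\xi)\W)\geq c_0\norm{\Q(\xi)\W}^2=c_0\norm{\W}^2$; and the projected estimate in~\eqref{C-estimates-1} follows from the identity $\Id-\P_{\rm f}(\xi)=\Q(\xi)^{-1}\Pisx\Q(\xi)$ together with the second bound of~\eqref{B-estimates-1}. The Lipschitz estimate~\eqref{C-estimates-3} is then immediate from~\eqref{B-estimates-4} applied to $\Q(\xi)\V_1,\Q(\xi)\V_2$, and~\eqref{C-estimates-6} is obtained by rewriting $(\T[\V_1,\xi]-\T[\V_2,\xi])\P_{\rm f}(\xi)=\Q(\xi)^{-1}(\T^x[\Q(\xi)\V_1]-\T^x[\Q(\xi)\V_2])\Pifx\Q(\xi)$ and invoking~\eqref{T-estimates-2}.

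The one point that deserves a separate verification is~\eqref{C-estimates-5}, where the slow-variable projection $\Pis$ must be commuted through $\Q(\xi)$: here I would observe that $\Pif$ (unlike $\Pifx$) commutes with $\Q(\xi)$. This is a direct computation from the block structure, since $\Pif$ projects each of the three size-$N$ blocks onto a single coordinate (the first for $\zeta$, the last for the two velocity blocks) while $\Q(\xi)$ acts blockwise as the identity on the first block and as a coordinate-independent linear combination of the other two. Consequently $\Pis(\Q(\xi)(\V_1-\V_2))=\Q(\xi)\Pis(\V_1-\V_2)$ has the same norm as $\Pis(\V_1-\V_2)$, and~\eqref{T-estimates-3} yields~\eqref{C-estimates-5}. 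Finally,~\eqref{C-estimates-4} is deduced from the splitting
\[ \T\B(\V_1)-\T\B(\V_2) = (\T(\V_1)-\T(\V_2))\P_{\rm f}(\xi)\B(\V_1)+(\T(\V_1)-\T(\V_2))(\Id-\P_{\rm f}(\xi))\B(\V_1)+\T(\V_2)(\B(\V_1)-\B(\V_2)), \]
each of the three pieces being $\O(\r\norm{\V_1-\V_2}|\xi|)$ by the bounds already established. There is no real obstacle: once the spectral and structural work of Lemmas~\ref{L.B} and~\ref{L.T} is in place, the Corollary is essentially bookkeeping, the only non-mechanical observation being the commutation $\Pif\Q(\xi)=\Q(\xi)\Pif$ that underlies~\eqref{C-estimates-5} and, in turn, the independent treatment of fast and slow variables in the subsequent energy estimates.
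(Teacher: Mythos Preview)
Your proof is correct and matches the paper's (implicit) approach: the paper states the Corollary without proof, treating it as an immediate consequence of Lemmas~\ref{L.B} and~\ref{L.T} via conjugation by the orthogonal matrix $\Q(\xi)$. You have correctly filled in the details, in particular the commutation $\Pif\Q(\xi)=\Q(\xi)\Pif$ needed for~\eqref{C-estimates-5} and the splitting argument for~\eqref{C-estimates-4}, neither of which the paper spells out.
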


\subsection{$L^2$ energy estimate}\label{S.energy-estimates-L2}

The following proposition shows that, thanks to the structure of the system exhibited in the previous section, one is able to control the ($L^2$) energy of solutions, uniformly on a time interval independent of $\r$ small. This is the key ingredient in the proof of our main results.
\begin{Proposition}\label{P.energy-estimate-L2}
Let $\V\in  W^{1,\infty}_{t,\x}((0,T)\times \RR^d)^{N(1+d)}$ satisfying~\eqref{C.depth},\eqref{C.hyp-V} with $h_0,\nu>0$, and $\W\in C^0(\into{T};L^2(\RR^d))^{N(1+d)}$, $\R \in L^1(0,T;L^2(\RR^d))^{N(1+d)}$ be such that for any $t\in\into{T}$, one has
\begin{equation}\label{FSR-L2}
 \partial_t \W\ +\ \frac1\r\B^x[\V] \partial_x \W \ + \ \frac1\r\B^y[\V] \partial_y \W \ =\ \R .
\end{equation}
One can set $\r_0^{-1},\nu_0=C(\m,h_0^{-1},\Norm{\V}_{L^\infty((0,T)\times\RR^d)})$  such that if $\r\in(0,\r_0)$ and $\nu\geq \nu_0$, then
\begin{equation}\label{L2-estimate} \norm{\W}_{L^2}(t)\leq C_0(0) e^{C_1\bNorm{\V} t} \ \norm{\W\id{t=0}}_{L^2}+ \int_0^t e^{C_1\bNorm{\V}(t-t')} C_0(t')\norm{\R}_{L^2}(t') \ \dd t',\end{equation}
with $C_0(t)=C(\m,h_0^{-1},\norm{\V}_{W^{1,\infty}}(t))$, $C_1=C(\m,h_0^{-1},\Norm{\V}_{L^\infty((0,T)\times\RR^d)})$ and 
\[\bNorm{\V}\eqdef  \Norm{\V}_{L^\infty(0,T;W^{1,\infty})}+\r\Norm{\partial_t\V}_{L^\infty((0,T)\times\RR^d)}+\Norm{\Pis\partial_t\V}_{L^\infty((0,T)\times\RR^d)}.\]
\end{Proposition}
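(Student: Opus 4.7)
My plan is to construct an energy functional equivalent to $\Norm{\W}_{L^2}^2$ from the symbolic symmetrizer $\T[\V,\xi]$ of Corollary~\ref{C.prop} and estimate its time derivative using the equation~\eqref{FSR-L2}. Since in dimension $d=2$ we only have a symbolic (and not a Friedrichs) symmetrizer, I would introduce Bony's paradifferential operator $\T_\V^{\rm op}$ associated to the symbol $\T[\V,\xi]$ (truncated suitably at low frequencies). Thanks to the coercivity and upper bound provided by~\eqref{C-estimates-2} and G\aa rding's inequality, the quantity $E(t)\eqdef (\T_\V^{\rm op}\W,\W)_{L^2}$, modulo a lower-order correction if needed, satisfies $E(t)\approx \Norm{\W}_{L^2}^2(t)$ with constants depending only on $c_0,C_0$, uniformly with respect to $\r$.

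Differentiating $E$ in time and substituting~\eqref{FSR-L2} gives
$$\tfrac12 \tfrac{\dd}{\dd t} E \ =\  \tfrac12(\partial_t \T_\V^{\rm op}\,\W,\W) \ -\ \tfrac1\r\mathrm{Re}\big(\T_\V^{\rm op}\B_\V^{\rm op}\W,\W\big)\ +\ \mathrm{Re}(\T_\V^{\rm op}\R,\W),$$
where $\B_\V^{\rm op}$ is the paradifferential operator of symbol $i\B[\V,\xi]$ corresponding to the true operator $\B^x[\V]\partial_x+\B^y[\V]\partial_y$; the discrepancy between the two is an $L^2$-bounded paralinearization remainder, uniformly in $\r$, since by~\eqref{C-estimates-3} the coefficient $\frac1\r \B[\V]$ has Lipschitz constant in $\V$ bounded independently of $\r$. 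The symbolic identity $(\T[\V,\xi]\B[\V,\xi])^\top=\T[\V,\xi]\B[\V,\xi]$ from Corollary~\ref{C.prop} makes $\T_\V^{\rm op}\B_\V^{\rm op}$ self-adjoint modulo a paradifferential operator of order zero. By Bony's symbolic calculus, its antisymmetric part has symbol involving Poisson brackets of the form $\partial_\xi\T\cdot D_\V\B\cdot\nabla\V$, and~\eqref{C-estimates-3} extracts a factor $\r$ from $D_\V\B$ which cancels the singular $\frac1\r$ prefactor. Consequently $\tfrac1\r\bigl(\T_\V^{\rm op}\B_\V^{\rm op}-(\T_\V^{\rm op}\B_\V^{\rm op})^*\bigr)$ is $L^2$-bounded by $C_0(t)\Norm{\V}_{W^{1,\infty}}$, uniformly in $\r$.

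The crucial remaining term is $(\partial_t\T_\V^{\rm op}\,\W,\W)$. Writing it as the paradifferential operator with symbol $D_\V\T[\V,\xi]\cdot \partial_t\V$, estimate~\eqref{C-estimates-5} gives $\Norm{D_\V\T\cdot\delta\V}\leq C_1(\Norm{\Pis\delta\V}+\r\Norm{\delta\V})$, so the $L^2\to L^2$ operator norm of $\partial_t\T_\V^{\rm op}$ is bounded by $C_1\bigl(\Norm{\Pis\partial_t\V}_{L^\infty}+\r\Norm{\partial_t\V}_{L^\infty}\bigr)$, which is exactly the part of $\bNorm{\V}$ designed to absorb any singular $\frac1\r$ behaviour carried by $\partial_t\V$ through the fast variables. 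The source term is handled via $|(\T_\V^{\rm op}\R,\W)|\leq C_0(t)\Norm{\R}_{L^2}\Norm{\W}_{L^2}$ using~\eqref{C-estimates-2}. Assembling these bounds produces a differential inequality of the form $\tfrac{\dd}{\dd t}E(t)\leq C_1\bNorm{\V}\,E(t)+C_0(t)\Norm{\R}_{L^2}\sqrt{E(t)}$, and Grönwall's lemma delivers~\eqref{L2-estimate}. The main obstacle is the meticulous bookkeeping of all paradifferential remainders at the symmetrization step, making sure that the singular $\frac1\r$ factor never escapes; this is guaranteed precisely by the package of structural ``smallness'' estimates~\eqref{C-estimates-3}, \eqref{C-estimates-5} and \eqref{C-estimates-6} enjoyed by $\B$ and $\T$ with respect to the fast components of $\V$.
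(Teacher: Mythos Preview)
Your overall strategy matches the paper's: build a paradifferential symmetrizer from $\T[\V,\xi]$, differentiate the associated energy, and use the structural estimates of Corollary~\ref{C.prop} to absorb every occurrence of $\tfrac1\r$. The treatment of $\partial_t\T_\V^{\rm op}$ via~\eqref{C-estimates-5} is exactly what the paper does (Lemma~\ref{L.S}, estimate~\eqref{S-commutator}).

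The gap is at the symmetrization step. Your heuristic ``the antisymmetric part has symbol $\partial_\xi\T\cdot D_\V\B\cdot\nabla\V$, and~\eqref{C-estimates-3} extracts a factor $\r$'' correctly identifies the subprincipal symbol, but the black-box composition estimate you have available (Proposition~\ref{P.paradiff-estimates}~(ii)) bounds $\mT_\T\mT_{i\B}-\mT_{i\T\B}$ by $M_1^0(\T)\,M_1^1(\B)$, and $M_1^1(\B)$ is dominated by $\Norm{\B}_{L^\infty}=\O(1)$, \emph{not} by $\Norm{\nabla_\x\B}_{L^\infty}=\O(\r)$. So the remainder is $\O(1)$ and, after the $\tfrac1\r$ prefactor, singular. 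With only $W^{1,\infty}$ regularity on $\V$ you cannot push the symbolic expansion one step further to isolate the $\O(\r)$ subprincipal term. This is precisely the danger the paper flags: paradifferential calculus yields remainders that are lower order in regularity but not necessarily small in $\r$.

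The paper's fix is to split $\B[\V,\xi]=\B[\z,\xi]+\delta\B$ and $\T[\V,\xi]=\T[\z,\xi]+\delta\T$, conjugate by $\Q(D)$ so that the leading parts $\T^x[\z]$ and $\B^x[\z]$ become genuine constant matrices, and handle those products exactly (no paradifferential error at all). Only the increments $\delta\B,\delta\T$ are paradifferentialized, and these carry the factor $\r$ from~\eqref{C-estimates-3},\eqref{C-estimates-4} already at the level of $M_1$ seminorms, so the black-box estimates suffice. Relatedly, the Gårding-type correction the paper inserts is not generic: it is $\lambda\Lambda_\r^{-1}=\lambda(1+|D|^2)^{-1/2}(\Pisx+\r\Pifx)$, with an $\r$-weight on the fast block calibrated to~\eqref{C-estimates-6}; an unweighted lower-order correction would again spoil uniformity. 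Your sketch is on the right track, but to close it you need this constant-plus-increment decomposition rather than the full-symbol paraproduct.
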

This section is dedicated to the proof of this result. The main ingredients are the properties of the symbol of system~\eqref{FSR-L2} as well as its symmetrizer, collected in Corollary~\ref{C.prop}. 
Energy estimates for such symmetrizable systems can be obtained thanks to Bony's paradifferential calculus associated with these symbols; see~\cite{M'etivier08}. We shall however be cautious as paradifferential calculus typically provides estimates ``up to lower order operators'': while this is sufficient for regularity aspects, this could induce order-zero but large (namely non uniformly bounded with respect to $\r$ small) remainder terms, preventing the desired uniform energy control stated in~\eqref{L2-estimate}.

This is why we decompose the symbols into a first order contribution which admits a natural quantization as a Fourier multiplier, whereas only second-order contributions will be paradifferentialized. Let us be more specific. Define
\[ \delta\B(t,\x,\xi)\eqdef \chi(\xi) \delta\B^x[\Q(\xi)\V(t,\x)] |\xi| \eqdef \chi(\xi) \big(\B^x[\Q(\xi)\V(t,x)]  - \B^x[\z] \big)|\xi|\]
where $\chi$ is a smooth non-negative cut-off function ($\chi(\xi)=0$ for $|\xi|\leq 1/2$ and $\chi(\xi)=1$ for $|\xi|\geq 1$);
and let $\mT_{i\delta\B}$ be the associated paradifferential operator (see Definition~\ref{D.paradiff}). 
Similarly, define
\[  \delta\T(t,\x,\xi) \eqdef \chi(\xi) \delta\T^x[\Q(\xi)\V(t,\x)] \eqdef \chi(\xi) \big(\T^x[\Q(\xi)\V(t,\x)]  - \T^x[\z] \big) \]
and $\mT_{\delta\T}$ the associated paradifferential operator; and
\begin{equation} \label{def-S}
  \mS(t)\eqdef  (\Q(D))^{-1}  \left( \T^x[\z] + \frac12  \left( \mT_{\delta\T} + \mT_{\delta\T}^\star\right)  +\lambda {\sf \Lambda}_\r^{-1} \right) \Q(D) ,\end{equation}
where ${\sf \Lambda}_\r^{-1}=(\Id+|D|^2)^{-1/2}(\Pisx+\r\Pifx)$ and $\lambda>0$ will be determined later on.
\medskip

We claim in Lemma~\ref{L.S}, below, that the properties on $\T[\V,\xi]$ given in Corollary~\ref{C.prop} are sufficient to show that $\mS(t)$ is a uniformly bounded and coercive operator, and show a precise estimate on its time derivative, $\mS'(t)$. In Lemma~\ref{L.paralinearization}, we then rewrite~\eqref{FSR-L2} as a symmetric, paradifferential equation, from which the energy estimate~\eqref{L2-estimate} is easily deduced.

\begin{Lemma}\label{L.S}
Let $\V$ be as in Proposition~\ref{P.energy-estimate-L2} and fix $t\in \into{T}$. Then $\mS(t):L^2\to L^2$, defined by~\eqref{def-S}, is self-adjoint and one can set $\lambda_0=C(\m,h_0^{-1},\norm{\V}_{W^{1,\infty}})$ and $\nu_0,\r_0^{-1}=C(\m,h_0^{-1},\norm{\V}_{L^\infty})$, such that if $\lambda\geq \lambda_0$ and $\nu\geq \nu_0,\r\in(0,\r_0)$, then one has 
\begin{equation}\label{S-coercive} \forall \W\in L^2(\RR^d)^{N(1+d)}, \quad  \frac{c_0}{2}\norm{\W}_{L^2}^2 \leq \big( \mS(t )\W,\W\big)_{L^2} \quad \text{ and } \quad \norm{ \mS(t )\W}_{L^2}\leq C_0\norm{\W}_{L^2} ,\end{equation}
where $c_0^{-1}=N(1+d),C_0=C(\m,h_0^{-1},\norm{\V}_{W^{1,\infty}})$. Moreover, $\mS'(t):L^2\to L^2$ is well-defined and one has
\begin{equation}\label{S-commutator} \forall \W\in L^2(\RR^d)^{N(1+d)}, \quad   \norm{\mS'(t)\W}_{L^2}\leq C_0' \big(\norm{\Pis \partial_t\V}_{L^\infty} +\r \norm{\partial_t\V}_{L^\infty}  \big)\norm{\W}_{L^2} ,\end{equation}
where $C_0'=C(\m,h_0^{-1},\norm{\V}_{L^\infty})$.
\end{Lemma}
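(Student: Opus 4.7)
The plan treats the four claims in order: self-adjointness, the upper bound in~\eqref{S-coercive}, the coercivity in~\eqref{S-coercive}, and the estimate~\eqref{S-commutator}.

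First I would dispatch self-adjointness and boundedness. Since $\Q(\xi)$ is orthogonal for $\xi\neq\z$, the Fourier multiplier $\Q(D)$ is unitary on $L^2$, so it suffices to analyze the operator inside the definition~\eqref{def-S}. The constant matrix $\T^x[\z]$ is symmetric by the construction~\eqref{def-T}, the symmetrization $\tfrac12(\mT_{\delta\T}+\mT_{\delta\T}^\star)$ is self-adjoint by construction, and ${\sf \Lambda}_\r^{-1}$ is self-adjoint as a Fourier multiplier with real, diagonal matrix symbol. The $L^2$-boundedness of $\mS(t)$ with constant $C_0$ then follows from the standard boundedness of paradifferential operators of order zero (Section~\ref{S.para}), applied to the uniform bound $\Norm{\delta\T(t,\cdot,\cdot)}_{L^\infty}\leq C_0$ granted by~\eqref{C-estimates-2}.

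The core of the argument is the coercivity. Conjugating away the unitary $\Q(D)$, it reduces to proving $(\M(t)\W,\W)_{L^2}\geq \tfrac{c_0}{2}\Norm{\W}_{L^2}^2$ for the middle operator $\M(t)\eqdef \T^x[\z]+\tfrac12(\mT_{\delta\T}+\mT_{\delta\T}^\star)+\lambda{\sf \Lambda}_\r^{-1}$. Its principal symbol equals $\chi(\xi)\T^x[\Q(\xi)\V(t,\x)]+(1-\chi(\xi))\T^x[\z]$, which is uniformly $\geq c_0\Id$ by~\eqref{C-estimates-2} since $\Q(\xi)$ is orthogonal. A G\aa rding-type inequality for symmetric paradifferential operators then yields
\begin{equation*}
\Bigl(\bigl(\T^x[\z]+\tfrac12(\mT_{\delta\T}+\mT_{\delta\T}^\star)\bigr)\W,\W\Bigr)_{L^2}\geq c_0\Norm{\W}_{L^2}^2 - (\text{remainder}),
\end{equation*}
the remainder being of order $-1$ with constant depending on the Lipschitz seminorm in $\x$ of $\delta\T$. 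Here lies the main difficulty: a naive estimate of this seminorm yields a constant $\sim\Norm{\V}_{W^{1,\infty}}$ on all components, whereas the compensator $\lambda{\sf \Lambda}_\r^{-1}$ provides a factor $\r$ only on the fast component. To resolve this I would split the remainder according to $\Id=\Pisx+\Pifx$ after applying $\Q(D)$ and invoke the sharper estimate~\eqref{C-estimates-6}, which asserts precisely that the fast-direction Lipschitz constant of $\T^x$ in $\V$ carries an extra factor $\r$. The remainder therefore takes the form $K_0\Norm{\Pisx\Q(D)\W}_{H^{-1/2}}^2+\r K_0\Norm{\Pifx\Q(D)\W}_{H^{-1/2}}^2$ with $K_0=C(\m,h_0^{-1},\Norm{\V}_{W^{1,\infty}})$, which matches exactly the structure of $\lambda({\sf \Lambda}_\r^{-1}\W,\W)_{L^2}$ and is absorbed by choosing $\lambda\geq\lambda_0\eqdef 2K_0$.

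Finally, since $\T^x[\z]$, ${\sf \Lambda}_\r^{-1}$, and $\Q(D)$ are $t$-independent, $\mS'(t)$ is unitarily conjugate to $\tfrac12(\mT_{\partial_t\delta\T}+\mT_{\partial_t\delta\T}^\star)$. Applying~\eqref{C-estimates-5} to $\V_1=\V(t+h,\x)$, $\V_2=\V(t,\x)$ and letting $h\to 0$ gives the pointwise bound $\Norm{\partial_t\delta\T(t,\x,\xi)}\leq C_0'(\Norm{\Pis\partial_t\V}_{L^\infty}+\r\Norm{\partial_t\V}_{L^\infty})$, uniformly in $\xi$. The $L^2$-boundedness of the associated paradifferential operator then delivers~\eqref{S-commutator}.
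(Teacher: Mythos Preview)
Your treatment of self-adjointness, the upper bound in~\eqref{S-coercive}, and the time-derivative estimate~\eqref{S-commutator} follows the paper's lines and is correct.

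The coercivity argument, however, has a genuine gap. You invoke G\aa rding for the full positive symbol $\chi(\xi)\T^x[\Q(\xi)\V]+(1-\chi(\xi))\T^x[\z]\geq c_0\Id$ and then assert that the order~$-1$ remainder splits as $K_0\|\Pisx\Q(D)\W\|_{H^{-1/2}}^2+\r K_0\|\Pifx\Q(D)\W\|_{H^{-1/2}}^2$. Standard G\aa rding only yields a remainder $K\|\W\|_{H^{-1/2}}^2$ with $K$ controlled by the full $W^{1,\infty}$ seminorm of $\delta\T$; estimate~\eqref{C-estimates-6} tells you that the \emph{symbol} $\delta\T\Pifx$ carries an extra~$\r$, but not that the G\aa rding \emph{remainder operator} inherits this block structure. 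The sharp G\aa rding proof passes through the square root $(A-c_0)^{1/2}$, which does not preserve block smallness, so the claimed split needs a separate argument that you do not supply. You also do not explain where the hypotheses $\nu\geq\nu_0$ and $\r<\r_0$ enter beyond the well-definedness of $\T^x$.

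The paper resolves this by a different decomposition: it introduces the intermediate point $\Z$ (obtained from $\V$ by zeroing all velocity entries) and writes $\Q(D)\mS(\Q(D))^{-1}=\T^x[\Z]+\mS_{(1)}+\mS_{(2)}+\mS_{(\rm r)}+\lambda{\sf\Lambda}_\r^{-1}$. Three observations drive the proof. First, the multiplication operator $\T^x[\Z(t,\x)]$ is itself $\geq c_0$ pointwise, so no G\aa rding is needed. Second, $\mS_{(1)}$, built from $\chi(\xi)(\T^x[\Q(\xi)\V]-\T^x[\Q(\xi)\Z])$, is small in $L^2\to L^2$ operator norm (of order $\nu^{-1}+\r\|\V\|_{L^\infty}$) by~\eqref{C-estimates-5}; this is precisely where the restrictions on $\nu$ and $\r$ are used. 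Third, since $\Q(\xi)\Z=\Z$, the symbol $\T^x[\Z]-\T^x[\z]$ is a function of $\x$ alone, so $\mS_{(2)}+\mS_{(\rm r)}$ is a paraproduct-type correction of order~$-1$, and then~\eqref{C-estimates-6} applied to this \emph{multiplication} operator directly yields the $\r$-factor on $\Pifx$ without passing through any square root. This third point is the device that makes compensation by $\lambda{\sf\Lambda}_\r^{-1}$ work cleanly and is what your sketch is missing.
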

\begin{proof}
That $\mS(t)$ is self-adjoint is obvious (recall in particular that $\Q(\xi)$ is orthogonal). Let us now decompose
\begin{align*} \Q(D)  \mS (\Q(D))^{-1} &= \T^x[\Z] + \frac12  \left( \mT_{\delta\T_{(1)}} + \mT_{\delta\T_{(1)}}^\star\right)  +\lambda{\sf \Lambda}_\r^{-1}  \\
&\hspace{2cm}+ \frac12  \left( \mT_{\delta\T_{(2)}} + \mT_{\delta\T_{(2)}}^\star\right)  - \chi(D)^{1/2} (\T^x[\Z]-\T^x[\z])\chi(D)^{1/2} \\
&\hspace{2cm}+ \chi(D)^{1/2} (\T^x[\Z]-\T^x[\z])\chi(D)^{1/2} - (\T^x[\Z]-\T^x[\z]) \\
&\eqdef \T^x[\Z] +\mS_{(1)}+\lambda{\sf \Lambda}_\r^{-1}+\mS_{(2)}+\mS_{({\rm r})} ,
\end{align*}
where $\Z$ is obtained from $\V$ by setting to zero all $n^{\rm th}$ entries with $n\geq N+1$, and  (notice  $\Q(\xi)\Z=\Z$)
\begin{align*}  
 \delta\T_{(1)}(t,\x,\xi) &\eqdef \chi(\xi)  (\T^x[\Q(\xi)\V(t,\x)] -\T^x[\Q(\xi)\Z])  , \\
 \delta\T_{(2)}(t,\x,\xi) &\eqdef \chi(\xi)  (\T^x[\Z]-\T^x[\z])  . 
  \end{align*}

Using~\eqref{C-estimates-2} in Corollary~\ref{C.prop}, we can define $C_0=C(\m,h_0^{-1},\norm{\Z}_{L^\infty})$ such that
\[\forall \W\in L^2(\RR^d)^{N(1+d)}, \quad   c_0\norm{\W}_{L^2}^2 \leq \big( \T^x[\Z] \W,\W\big)_{L^2} \quad \text{ and } \quad \norm{ \T^x[\Z] \W}_{L^2}\leq \frac12 C_0\norm{\W}_{L^2} .\]
By~\eqref{C-estimates-5} in Corollary~\ref{C.prop} and since $\V$ satisfies~\eqref{C.hyp-V}, one has for any $ (t,\x,\xi)\in \into{T}\times \RR^d\times \RR^d\setminus\{\z\}$,
\[\Norm{\T^x[\Q(\xi)\V(t,\x)] -\T^x[\Q(\xi)\Z(t,\x)]} \leq C(\m,h_0^{-1},\norm{\V(t,\x)}) (\nu^{-1}+\r\norm{\V(t,\x)}).\]
Similarly, the Lipschitz estimate~\eqref{C-estimates-5} yields for any $|\alpha|\leq 2$ and $ (t,\x,\xi)\in \into{T}\times \RR^d\times \RR^d$,
\[ \sup_{\x\in\RR^d}\Norm{\partial_\xi^\alpha \delta\T_{(1)}(t,\x,\xi) } \leq C(\m,h_0^{-1},\norm{\V}_{L^\infty},\alpha) (\nu^{-1}+\r\norm{\V}_{L^\infty}) (1+|\xi|)^{-|\alpha|},\]
so that the contribution from $\mS_{(1)} $ is estimated thanks to Proposition~\ref{P.paradiff-estimates} (item i) as follows:
\[ \Norm{\mS_{(1)}}_{L^2\to L^2}\leq  C(\m,h_0^{-1},\norm{\V}_{L^\infty}) (\nu^{-1}+\r\norm{\V}_{L^\infty}) .\]
Similarly, using~\eqref{C-estimates-6} in Corollary~\ref{C.prop} yields, uniformly in $t\in\into{T}$,
\[ \Norm{\T^x[\Z]-\T^x[\z]}_{W^{1,\infty}}+\frac1\r \Norm{(\T^x[\Z]-\T^x[\z])\Pifx}_{W^{1,\infty}}\leq  C(h_0^{-1} ,\m,\norm{\V}_{L^\infty}) \norm{\V}_{W^{1,\infty}};\]
and one obtains identically the corresponding estimates for derivatives with respect to $\xi$. Thus Propositions~\ref{P.paradiff-estimates} and~\ref{P.paradiff-particular cases} yield for any $\W\in L^2(\RR^d)^{N(1+d)}$,
\[ \norm{\mS_{(2)}\W }_{L^2}+\frac1\r \norm{\mS_{(2)} \Pifx \W }_{L^2}\leq C(\m,h_0^{-1},\norm{\V}_{L^\infty})\norm{\V}_{W^{1,\infty}} \norm{\W}_{H^{-1}}.\]
One easily checks that the last contribution satisfies the same estimate:
\[ \norm{\mS_{({\rm r})}\W }_{L^2}+\frac1\r \norm{\mS_{({\rm r})}\Pifx \W }_{L^2}\leq C(\m,h_0^{-1},\norm{\V}_{L^\infty})\norm{\V}_{W^{1,\infty}} \norm{\W}_{H^{-1}}.\]
Here, we used that for scalar functions $v\in W^{1,\infty}(\RR^d)$ and $w\in H^{-1}(\RR^d)$, one has
\begin{equation}\label{prod-H-1} \norm{vw}_{H^{-1}}\leq C(d)  \norm{v}_{W^{1,\infty}}\norm{w}_{H^{-1}},\end{equation}
where $C(d)$ is a universal constant (by duality, since for any $\varphi\in H^1$, one has $v\varphi\in H^1$ and $\langle w,v\varphi\rangle_{H^{-1}-H^1}\leq C \norm{w}_{H^{-1}}\norm{v}_{W^{1,\infty}}\norm{\varphi}_{H^1}$).

Thanks to the above estimates, one may choose $\lambda\geq \lambda_0=C(\m,h_0^{-1},\norm{\V}_{L^\infty})\norm{\V}_{W^{1,\infty}}$ sufficiently large so that one has 
\[ \forall \W\in L^2(\RR^d)^{N(1+d)}, \quad \big( (\mS_{(2)}+\mS_{({\rm r})} )\W,\W\big)_{L^2}+\lambda \big( {\sf \Lambda}_\r^{-1} (D)\W,\W\big)_{L^2} \geq 0.\]
It is now clear (using that $\Q(\xi)$ is orthogonal for $\xi\in\RR^d\setminus\{\z\}$) that one can restrict $\r,\nu,\lambda$ as in the statement so that~\eqref{S-coercive} holds.
\medskip

As for the second part of the statement, by definition of the quantization $\mT$ (see Definition~\ref{D.paradiff}) and since $\partial_t$ commutes with constant operators and Fourier multipliers, one has
\[ \Q(D) \mS'(t) (\Q(D))^{-1} =  \frac12  \left( \mT_{\partial_t (\delta\T)} + \mT_{\partial_t (\delta\T)}^\star\right).\]
Moreover, using~\eqref{C-estimates-5} in Corollary~\ref{C.prop}, one has
\[ \Norm{ \partial_t(\T^x[\V(t,\x)] )}\leq C(\m,h_0^{-1},\norm{\V(t,\x)}) \left(\norm{\Pis\partial_t\V(t,\x)}+\r\norm{\partial_t\V(t,\x)}\right).\]
Again, one easily obtains the corresponding estimates for derivatives with respect to $\xi$ and Proposition~\ref{P.paradiff-estimates} (item i) yields
\[ \Norm{\mS'(t)}_{L^2\to L^2}\leq  C(\m,h_0^{-1},\norm{\V}_{L^\infty}) \left(\norm{\Pis\partial_t\V}_{L^\infty}+\r\norm{\partial_t\V}_{L^\infty}\right).\]
Estimate~\eqref{S-commutator} is proved, and the proof of Lemma~\ref{L.S} is complete.
\end{proof}

\begin{Lemma}\label{L.paralinearization}
Let $\V,\W,\R$ be as in Proposition~\ref{P.energy-estimate-L2} and $\lambda,\r,\nu$ as in Lemma~\ref{L.S}. Then one has
\begin{equation}\label{FS-para}
 \mS \partial_t \W +   (\Q(D))^{-1} \left( i \frac1\r  \T^x[\z]  \B^x[\z] |D| +\mT_{i{\sf \Sigma}}  \right) \Q(D) \W = \mR [\V,\W,\R],
\end{equation}
where
\[  {\sf \Sigma}(t,\x,\xi) =\frac1\r \chi(\xi)\big(  \T^x[\Q(\xi)\V(t,\x)]\B^x[\Q(\xi)\V(t,\x)] - \T^x[\z]\B^x[\z]  \big)|\xi| ,\]
and $\mR[\V,\W,\R]\in L^1(0,T;L^2(\RR^d))^{N(1+d)}$ satisfies 
\[  \norm{\mR[\V,\W,\R] }_{L^2} (t)\leq C_0 \norm{\R }_{L^2}(t) +C_1 \norm{\W }_{L^2}(t)  \]
with $C_0=C(\m,h_0^{-1},\norm{\V}_{W^{1,\infty}}),C_1=C(\m,h_0^{-1},\norm{\V}_{L^\infty})\times \norm{\V}_{W^{1,\infty}}$. Moreover, one has
\[ \forall t\in \into{T}, \quad \left|\Re\big(\mT_{i{\sf \Sigma}} \W,\W\big)_{L^2} \right|\leq C(\m,h_0^{-1},\norm{\V}_{L^\infty}) \norm{\V}_{W^{1,\infty}}\norm{\W}_{L^2}^2.\]
\end{Lemma}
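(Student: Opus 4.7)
The plan is to apply $\mS$ to both sides of \eqref{FSR-L2} and reorganize the singular operator $\mS\cdot\tfrac{1}{\r}(\B^x[\V]\partial_x+\B^y[\V]\partial_y)\W$ into the announced form $(\Q(D))^{-1}(\tfrac{i}{\r}\T^x[\z]\B^x[\z]|D|+\mT_{i{\sf\Sigma}})\Q(D)\W$ up to an $L^2$-remainder absorbed into $\mR[\V,\W,\R]$. By the rotational invariance \eqref{B-rot}, the full symbol of the singular operator is $\tfrac{i}{\r}\B[\V,\xi]=\tfrac{i}{\r}\Q(\xi)^{-1}\B^x[\Q(\xi)\V]\Q(\xi)|\xi|$, so after conjugating by $\Q(D)$ one is reduced to analyzing $\tfrac{i}{\r}\B^x[\Q(\xi)\V]|D|$. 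I would split this into the exact constant Fourier multiplier $\tfrac{i}{\r}\B^x[\z]|D|$ and the variable part, which I paralinearize using Bony's calculus as $\tfrac{i}{\r}\mT_{\delta\B}+\mR_\B$, where $\delta\B$ is precisely the symbol defined before Lemma~\ref{L.S}. The residue $\mR_\B:L^2\to L^2$ is controlled uniformly in $\r$ since \eqref{B-estimates-1}--\eqref{B-estimates-4} give that $\delta\B$ is $O(\r)$ both in sup-norm and in Lipschitz norm in $\V$, in the appropriate paradifferential semi-norms, which cancels the explicit $\r^{-1}$.

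Next, I compose $\mS$ as given in \eqref{def-S} with this decomposition. The contribution from the constant piece $\T^x[\z]$ produces exactly the target Fourier multiplier $\tfrac{i}{\r}\T^x[\z]\B^x[\z]|D|$ together with the paradifferential term $\tfrac{i}{\r}\mT_{\T^x[\z]\delta\B}$. Composing $\tfrac{1}{2}(\mT_{\delta\T}+\mT_{\delta\T}^\star)$ with $\tfrac{i}{\r}(\B^x[\z]|D|+\mT_{\delta\B})$ produces, by the standard symbol composition and adjoint rules (Proposition~\ref{P.paradiff-estimates}), a paradifferential operator of principal symbol $\tfrac{i}{\r}\delta\T(\B^x[\z]+\delta\B)|\xi|$ plus an order-zero residue $L^2$-bounded by $C_1\|\W\|_{L^2}$; the composition residue absorbs the $\r^{-1}$ thanks to the joint $\r$-smallness of $\delta\T$ and $\delta\B$ from \eqref{T-estimates-3} and \eqref{B-estimates-4}. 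Finally, $\lambda{\sf\Lambda}_\r^{-1}\cdot\tfrac{i}{\r}\B^x[\Q(\xi)\V]|D|$ is $L^2$-bounded uniformly in $\r$: on the slow subspace $\Pisx\B^x[\V]$ is already $O(\r)$ by \eqref{B-estimates-1}, while on the fast subspace the explicit $\r$-factor built into ${\sf\Lambda}_\r^{-1}$ cancels the $\r^{-1}$. Summing the principal symbols, the paradifferential part is exactly $\tfrac{i}{\r}\chi(\xi)(\T^x[\Q(\xi)\V]\B^x[\Q(\xi)\V]-\T^x[\z]\B^x[\z])|\xi|=i{\sf\Sigma}$. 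All residues together with $\mS\R$---bounded by $C_0\|\R\|_{L^2}$ via \eqref{S-coercive}---are then collected into $\mR[\V,\W,\R]$.

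For the pointwise bound on $\mathrm{Re}\,(\mT_{i{\sf\Sigma}}\W,\W)_{L^2}$, the key observation is that by Lemma~\ref{L.T}, $\T^x[\V]\B^x[\V]$ is symmetric for every admissible $\V$; since $\Q(\xi)^{-1}=\Q(\xi)^\top$, the matrix $\T[\V,\xi]\B[\V,\xi]$, and hence ${\sf\Sigma}(t,\x,\xi)$, is real-symmetric for every $(t,\x,\xi)$. Therefore $i{\sf\Sigma}$ is anti-Hermitian and the paradifferential adjoint formula yields $\mT_{i{\sf\Sigma}}^\star=-\mT_{i{\sf\Sigma}}+\mR_{\rm ad}$, where $\mR_{\rm ad}:L^2\to L^2$ has operator norm bounded by the appropriate $W^{1,\infty}_\x$-semi-norms of ${\sf\Sigma}$, which by \eqref{C-estimates-4} are at most $C(\m,h_0^{-1},\|\V\|_{L^\infty})\|\V\|_{W^{1,\infty}}$ uniformly in $\r$. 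Consequently $2\,\mathrm{Re}\,(\mT_{i{\sf\Sigma}}\W,\W)_{L^2}=(\mR_{\rm ad}\W,\W)_{L^2}$, yielding the claimed estimate.

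The main obstacle is not the paradifferential machinery itself, which is standard, but the careful $\r$-bookkeeping at every step: isolating the constant Fourier multiplier $\tfrac{i}{\r}\T^x[\z]\B^x[\z]|D|$ \emph{exactly}, rather than burying it inside a paradifferential symbol, is what allows the $\r$-smallness of $\B^x[\V]-\B^x[\z]$, $\T^x[\V]-\T^x[\z]$, and $\T[\V,\xi]\B[\V,\xi]-\T^x[\z]\B^x[\z]$ supplied by \eqref{B-estimates-1}--\eqref{B-estimates-4}, \eqref{T-estimates-3}, and \eqref{C-estimates-4} to genuinely cancel the explicit $\r^{-1}$ prefactors in each composition residue, keeping every remainder uniformly controlled by $\|\V\|_{W^{1,\infty}}\|\W\|_{L^2}$.
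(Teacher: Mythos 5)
Your overall architecture matches the paper's: apply $\mS$ to \eqref{FSR-L2}, use the rotational invariance to conjugate by $\Q(D)$ and reduce to the symbol $\B^x[\Q(\xi)\V]|\xi|$, isolate the exact Fourier multiplier $\frac{i}{\r}\T^x[\z]\B^x[\z]|D|$, identify the remaining principal symbol with $i{\sf\Sigma}$, and obtain the last estimate from the symmetry of ${\sf\Sigma}$, the uniform bound \eqref{C-estimates-4}, and the paradifferential adjoint rule. Those steps are sound and are exactly the paper's.

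The gap is in your $\r$-bookkeeping for the residues involving $\mT_{\delta\T}$. You justify them by ``the joint $\r$-smallness of $\delta\T$ and $\delta\B$'' and, in your final paragraph, explicitly list $\T^x[\V]-\T^x[\z]$ among the $\r$-small quantities supplied by \eqref{T-estimates-3}. That premise is false: taking $\V_2=\z$ in \eqref{T-estimates-3} gives only $\Norm{\T^x[\V]-\T^x[\z]}\leq C\big(\norm{\Pis\V}+\r\norm{\V}\big)$, and the slow component $\Pis\V$ is of order one (this is precisely the ``$\r$-balanced, not $\r$-diagonal'' feature of the system). Hence the residue from composing $\frac12(\mT_{\delta\T}+\mT_{\delta\T}^\star)$ with the singular multiplier $\frac{i}{\r}\B^x[\z]|D|$ is a priori of size $\r^{-1}M^0_1(\delta\T;3)\,\Norm{\B^x[\z]}$, which does not close. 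What actually saves the argument is a cancellation of the \emph{product}, not of the factors: writing
\[ \delta\T\,\B^x[\z]=\big(\delta\T\,\Pifx\big)\B^x[\z]+\delta\T\,\big(\Pisx\B^x[\z]\big), \]
and using $\Norm{\delta\T\,\Pifx}\leq\r\,C\norm{\V}$ (from \eqref{T-estimates-2}, i.e.\ \eqref{C-estimates-6}) together with $\Norm{\Pisx\B^x[\z]}\leq\r\,C$ (from \eqref{B-estimates-1}, i.e.\ \eqref{C-estimates-1}), one gets $\Norm{\delta\T\,\B^x[\z]}\leq\r\,C\norm{\V}$, and likewise for the $\xi$-derivatives of the symbol. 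This insertion of the projections $\Pifx,\Pisx$ is the step you must add (it is also what makes the ${\sf\Lambda}_\r^{-1}$-term and the $\T^x[\z]\,\delta\B$-term harmless); with it, your decomposition coincides with the paper's treatment of the terms $\mR_{(1)}$ and $\mR_{(2)}$.
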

\begin{proof}
Notice first that, using the rotational invariance property~\eqref{B-rot}, one has
\[\B^x[\z]\partial_x\W+\B^y[\z]\partial_y \W = i (\Q(D))^{-1} \B^x[\z]  \Q(D)|D|\W. \]
Thus applying the operator $\mS$ to~\eqref{FSR-L2} yields~\eqref{FS-para}
with
\begin{multline} \label{def-mR}
 \mR[\V,\W,\R]\eqdef \mS\R \ +\ (\Q(D))^{-1}  \mT_{i{\sf \Sigma}} \Q(D) \W\\
 -(\Q(D))^{-1} \left( \frac12  \left( \mT_{\delta\T} + \mT_{\delta\T}^\star\right)  +\lambda {\sf \Lambda}_\r^{-1}(D)\right) \Q(D)\left(\frac1\r\B^x[\V]\partial_x\W+\frac1\r\B^y[\V]\partial_y \W \right) \\
 -(\Q(D))^{-1} \T^x[\z] \Q(D) \left(\frac1\r\big(\B^x[\V]-\B^x[\z]\big)\partial_x\W+\frac1\r\big(\B^y[\V]-\B^y[\z]\big)\partial_y \W \right)  .
\end{multline}
We first note that, using estimate~\eqref{C-estimates-1} in Corollary~\ref{C.prop} yields immediately
\[  \frac1\r \norm{ {\sf \Lambda}_\r^{-1}(D)\Q(D)\left(\B^x[\z]\partial_x\W+\B^y[\z]\partial_y \W \right)}_{L^2}=\frac1\r \norm{ {\sf \Lambda}_\r^{-1}(D)\B^x[\z] \Q(D) |D|\W}_{L^2}\leq C(\m,h_0^{-1},\lambda) \norm{\W}_{L^2}.\]
We then deduce from estimate~\eqref{C-estimates-3} in Corollary~\ref{C.prop} that
\[ \Norm{\B^x[\V]-\B^x[\z]}_{W^{1,\infty}}+\Norm{\B^y[\V]-\B^y[\z]}_{W^{1,\infty}}\leq  \r\ C(h_0^{-1} ,\m,\norm{\V}_{L^\infty}) \norm{\V}_{W^{1,\infty}};\]
and in turn, by~\eqref{prod-H-1}, 
\[ \forall t\in\into{T},\quad  \norm{ {\sf \Lambda}_\r^{-1}(D)\Q(D)\left(\frac1\r\B^x[\V]\partial_x\W+\frac1\r\B^y[\V]\partial_y \W \right)}_{L^2}\leq C(\m,h_0^{-1},\norm{\V}_{L^\infty}) \norm{\V}_{W^{1,\infty}} \norm{\W}_{L^2}.\]
Thus there only remains to estimate
\begin{align*}
 \mR_{(1)}[\V,\W]&\eqdef (\Q(D))^{-1} \mT_{i{\sf \Sigma}_{(1)}}\Q(D)\W- \frac12   (\Q(D))^{-1}\left( \mT_{\delta\T} + \mT_{\delta\T}^\star\right) \Q(D) \left(\frac1\r\B^x[\V]\partial_x\W+\frac1\r\B^y[\V]\partial_y \W \right) ,\\
\mR_{(2)}[\V,\W]&\eqdef (\Q(D))^{-1} \mT_{i{\sf \Sigma}_{(2)}}\Q(D)\W\\
&\hspace{2cm} -(\Q(D))^{-1} \T^x[\z] \Q(D) \left(\frac1\r \big(\B^x[\V]-\B^x[\z]\big)\partial_x\W+\frac1\r \big(\B^y[\V]-\B^y[\z]\big)\partial_y \W \right) ,
 \end{align*}
 where
\[
  {\sf \Sigma}_{(1)}(t,\x,\xi) \eqdef \frac1\r \delta\T(t,\x,\xi)\B^x[\Q(\xi)\V]  \quad \text{ and } \quad 
  {\sf \Sigma}_{(2)}(t,\x,\xi) \eqdef \frac1\r \T^x[\z]\delta\B(t,\x,\xi) .
\]
 
 These terms are estimated exactly as in the proof of Lemma~\ref{L.S}, \ie using the paradifferential calculus of Propositions~\ref{P.paradiff-estimates} and~\ref{P.paradiff-particular cases} together with the estimates of Corollary~\ref{C.prop}, thus we do not detail. Let us just indicate why these contributions are uniformly bounded with respect to $\r$ small. The case of $ \mR_{(2)}[\V,\W]$ is quickly settled by~\eqref{C-estimates-3} in Corollary~\ref{C.prop}. As for $ \mR_{(1)}[\V,\W]$, we decompose as above
 \[ \frac1\r \B^x[\V] = \frac1\r\B^x[\z] + \frac1\r \big( \B^x[\V] -\B^x[\z]\big).\]
 The contribution from the second component is uniformly bounded with respect to $\r$ small, again thanks to~\eqref{C-estimates-3} in Corollary~\ref{C.prop}. The contribution from the first component may also be uniformly bounded by remarking that
 \[ \Norm{ \delta\T(t,\x,\xi)\B^x[\z]  } \leq \Norm{\big(\delta\T(t,\x,\xi) \Pif \big) \B^x[\z] }+\Norm{\delta\T(t,\x,\xi)\big(\Pis\B^x[\z] \big) }\leq \r\times C(\m,h_0^{-1},\norm{\V})\norm{\V},\]
 where we used~\eqref{C-estimates-1},~\eqref{C-estimates-5} and~\eqref{C-estimates-6} in Corollary~\ref{C.prop}.
 
 Altogether, one estimates $\mR$ in~\eqref{def-mR} as desired, namely
\[  \norm{\mR[\V,\W,\R] }_{L^2} \leq C_0 \norm{\R }_{L^2} +C_1 \norm{\W }_{L^2}  ,\]
with $C_0=C(\m,h_0^{-1},\norm{\V}_{W^{1,\infty}}),C_1=C(\m,h_0^{-1},\norm{\V}_{L^\infty})\times \norm{\V}_{W^{1,\infty}}$.
\medskip

There remains to estimate $\Re\big(\mT_{i{\sf \Sigma}} \W,\W\big)_{L^2} $.
By~\eqref{C-estimates-4} in Corollary~\ref{C.prop}, one has
\[\forall (t,\xi)\in \into{T}\times\RR^d,\qquad  \Norm{{\sf \Sigma}(t,\cdot,\xi)  }_{W^{1,\infty}}\leq \chi(\xi)|\xi| C(\m,h_0^{-1},\norm{\V}_{L^\infty})\norm{\V}_{W^{1,\infty}} ,\]
and ${\sf \Sigma}(t,\x,\xi)$ is symmetric. Proposition~\ref{P.paradiff-estimates} (items ii. and iii.) as well as Proposition~\ref{P.paradiff-particular cases} yield
\[ \forall t\in \into{T}, \quad \left|\Re\big(\mT_{i{\sf \Sigma}} \W,\W\big)_{L^2} \right|\leq C(\m,h_0^{-1},\norm{\V}_{L^\infty}) \norm{\V}_{W^{1,\infty}}\norm{\W}_{L^2}^2.\]
The proof of Lemma~\ref{L.paralinearization} is complete.
\end{proof}

\paragraph{Completion of the proof.}

Assume $\W$ is sufficiently regular, say $\W\in C^1(\into{T};H^1(\RR^d))^{N(1+d)}$, so that all the calculations below are well-defined.
We compute the $L^2$ inner-product of identity~\eqref{FS-para} with $\W$: it follows
\begin{multline*} \frac12\frac{\dd}{\dd t}\big( \mS \W ,\W\big)_{L^2}\leq \frac12 \Big|\big( [\partial_t,\mS] \W ,\W\big)_{L^2}\Big|+\frac12 \Big|\Re\big( i \frac1\r   (\Q(D))^{-1} \T^x[\z]  \B^x[\z] \Q(D)|D| \W , \W \big)\Big|\\+\Big|\Re\big(\mT_{i{\sf \Sigma}} \W,\W\big)_{L^2} \Big|+\Big| \big(\mR [\R,\V,\W],\W\big)_{L^2}\Big|.
\end{multline*}

The second term on the right-hand-side is identically zero since $ \T^x[\z]  \B^x[\z] $ is symmetric. The other terms are estimated thanks to Cauchy-Schwarz inequality and Lemmata~\ref{L.S} and~\ref{L.paralinearization}. Altogether, this shows that provided we restrict $\r\in(0,\r_0)$ and $\nu\geq \nu_0$ as in Lemma~\ref{L.S}, one has
\begin{equation} \label{diff-inequality}
\frac12\frac{\dd}{\dd t}\big( \mS \W ,\W\big)_{L^2}\leq C_1 \norm{\W}_{L^2}^2 +  \norm{\mR [\R,\V,\W]}_{L^2}\norm{\W}_{L^2},
\end{equation}
with $C_1=C(\m,h_0^{-1},\norm{\V}_{L^\infty})\times( \norm{\V}_{W^{1,\infty}}+\r\norm{\partial_t\V}_{L^\infty}+\norm{\Pis\partial_t\V}_{L^\infty})$.
Estimate~\eqref{L2-estimate} follows from Gronwall's Lemma and the coercivity of $\mS$, \ie~\eqref{S-coercive} in Lemma~\ref{L.S}, together with the control of $\mR\in L^1(0,T;L^2)$ provided in Lemma~\ref{L.paralinearization}. The fact that the same estimate holds for general $\W\in C^0(\into{T};L^2(\RR^d))^{N(1+d)}$ solution to~\eqref{FSR-L2} may be obtained by density and thanks to a standard regularization process; see~\cite[Theorem 7.1.11]{M'etivier08}. Proposition~\ref{P.energy-estimate-L2} is proved.

\subsection{$H^s$ energy estimate}\label{S.energy-estimates-Hs}

The $L^2$ energy estimate derived in Proposition~\ref{P.energy-estimate-L2} quickly induces a similar $H^s$ estimate, for any $s>d/2+1$, by using once more the specific structure of our system of equations, namely that singular terms appear only as linear components of the system~\eqref{FS-V-compact}.
\begin{Proposition}\label{P.energy-estimate-Hs}
Let $s>d/2+1$ and $\V ,\W\in C^0(\into{T};H^s(\RR^d))^{N(1+d)} $ be such that $\V$ satisfies~\eqref{C.depth},\eqref{C.hyp-V} with $h_0,\nu>0$ and $\partial_t\V \in L^\infty((0,T)\times\RR^d)^{N(1+d)}$, and 
\[
 \partial_t \W \ + \ \frac1\r \B^x[\V] \partial_x \W \ + \ \frac1\r \B^y[\V] \partial_y \W \ = \ \R ,
\]
with $\R \in L^1(0,T;H^s(\RR^d))^{N(1+d)}$. Assume that $\r\in(0,\r_0)$ and $\nu\geq \nu_0$ with $\r_0,\nu_0$ as in Proposition~\ref{P.energy-estimate-L2}. Then one has, for any $t\in \into{T}$,
\begin{equation}\label{Hs-estimate} \norm{\W}_{H^s}(t)\leq C_0(0) e^{C_1\bNorm{\V}_s t} \ \norm{\W\id{t=0}}_{L^2}+ \int_0^t e^{C_1\bNorm{\V}_s(t-t')} C_0(t')\norm{\R}_{H^s}(t') \ \dd t',\end{equation}
with $C_0(t)=C(\m,h_0^{-1},\Norm{\V}_{L^\infty(0,t;W^{1,\infty})})$, $C_1=C(\m,h_0^{-1},\Norm{\V}_{L^\infty(0,T;H^s)})$ and
\[ \bNorm{\V}_s\eqdef  \Norm{\V}_{L^\infty(0,T; H^s)}+\r\Norm{\partial_t\V}_{L^\infty((0,T)\times\RR^d)}+\Norm{\Pis\partial_t\V}_{L^\infty((0,T)\times\RR^d)}.\]
\end{Proposition}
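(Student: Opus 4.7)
The plan is to reduce the question to the $L^2$ estimate already established in Proposition~\ref{P.energy-estimate-L2} by applying the Fourier multiplier ${\sf \Lambda}^s\eqdef(1-\Delta)^{s/2}\,{\sf I}_{N(1+d)}$ to the equation. Setting $\W_s\eqdef {\sf \Lambda}^s\W$, one obtains formally
\[
\partial_t \W_s \ + \ \frac{1}{\r}\B^x[\V]\partial_x \W_s \ + \ \frac{1}{\r}\B^y[\V]\partial_y \W_s \ = \ {\sf \Lambda}^s \R + \mR_s,
\]
with
\[
\mR_s \ \eqdef \ \frac{1}{\r}\bigl[{\sf \Lambda}^s,\B^x[\V]\bigr]\partial_x\W \ + \ \frac{1}{\r}\bigl[{\sf \Lambda}^s,\B^y[\V]\bigr]\partial_y\W.
\]
I would then apply Proposition~\ref{P.energy-estimate-L2} to $\W_s$ with source term ${\sf \Lambda}^s\R+\mR_s$, and close the resulting inequality by Gronwall's lemma. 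Since $\V$ is the same as in the $L^2$ estimate, the hypotheses~\eqref{C.depth} and~\eqref{C.hyp-V} are already satisfied.

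The only genuine difficulty is that $\mR_s$ carries the singular $\r^{-1}$ prefactor, which a priori could destroy uniformity. The key structural observation that rescues us is already built into our formulation: $\B^x[\z]$ and $\B^y[\z]$ are constant matrices, and hence commute with ${\sf \Lambda}^s$. This allows one to rewrite
\[
\mR_s \ = \ \frac{1}{\r}\bigl[{\sf \Lambda}^s,\B^x[\V]-\B^x[\z]\bigr]\partial_x\W \ + \ \frac{1}{\r}\bigl[{\sf \Lambda}^s,\B^y[\V]-\B^y[\z]\bigr]\partial_y\W.
\]
By the Lipschitz estimate~\eqref{C-estimates-3} of Corollary~\ref{C.prop} (or directly~\eqref{B-estimates-4} in Lemma~\ref{L.B}), the differences $\B^x[\V]-\B^x[\z]$ and $\B^y[\V]-\B^y[\z]$ are of size $\O(\r)$ times smooth functions of $\V$, so the $\r^{-1}$ factor exactly compensates. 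A standard Kato--Ponce type commutator estimate (Appendix~\ref{S.product}), applied componentwise, then yields
\[
\Norm{\mR_s}_{L^2} \ \leq \ C(\m,h_0^{-1},\norm{\V}_{H^s})\,\bigl(\norm{\V}_{W^{1,\infty}}\norm{\W}_{H^s}+\norm{\V}_{H^s}\norm{\W}_{W^{1,\infty}}\bigr),
\]
uniformly with respect to $\r\in(0,\r_0)$. The Sobolev embedding $H^s\hookrightarrow W^{1,\infty}$ (valid since $s>d/2+1$ and $d\in\{1,2\}$) allows to absorb $\norm{\W}_{W^{1,\infty}}$ into $\norm{\W}_{H^s}$.

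Injecting this control into the $L^2$ estimate~\eqref{L2-estimate} gives
\[
\norm{\W}_{H^s}(t)=\norm{\W_s}_{L^2}(t)\leq C_0(0)e^{C_1\bNorm{\V}_s t}\norm{\W\id{t=0}}_{H^s}+\int_0^t e^{C_1\bNorm{\V}_s(t-t')}C_0(t')\bigl(\norm{\R}_{H^s}+\norm{\mR_s}_{L^2}\bigr)(t')\,\dd t',
\]
and an application of Gronwall's lemma absorbing the linear-in-$\norm{\W}_{H^s}$ contribution from $\mR_s$ into the exponential yields exactly~\eqref{Hs-estimate}, with the $W^{1,\infty}$ norm of $\V$ still entering $C_0$ and the $H^s$ norm contributing to $C_1$ and to $\bNorm{\V}_s$. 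A final regularization argument (truncating $\W$ in frequency, or proceeding as in~\cite[Theorem 7.1.11]{M'etivier08}) extends the identity to general $\W\in C^0(\into{T};H^s)$ and legitimates the formal manipulations. The main conceptual obstacle is really the commutator step: absent the fact ---exhibited in Lemma~\ref{L.B}--- that the singular component of $\B[\V]$ is independent of $\V$, the standard high-norm estimate would lose a factor $\r^{-1}$ and be useless on the desired timescale.
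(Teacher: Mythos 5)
Your proposal is correct and follows essentially the same route as the paper: conjugate by $\Lambda^s$, use that the singular part $\frac1\r\B^{x,y}[\z]$ is constant and hence commutes with $\Lambda^s$ so that the commutator only involves $\frac1\r(\B^{x,y}[\V]-\B^{x,y}[\z])=\O(1)$ (estimated via Kato--Ponce and the Lipschitz bound~\eqref{B-estimates-4}), then invoke the $L^2$ theory and Gronwall. The only cosmetic difference is that the paper re-enters the differential inequality~\eqref{diff-inequality} rather than applying Proposition~\ref{P.energy-estimate-L2} as a black box, which keeps the factor multiplying $\norm{\R}_{H^s}$ dependent only on $\Norm{\V}_{W^{1,\infty}}$; your Gronwall absorption yields the same stated estimate since $C_1$ is allowed to depend on $\Norm{\V}_{L^\infty(0,T;H^s)}$.
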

\begin{proof}
Denote $\W^s=\Lambda^{s}\V\in C^0(\into{T};L^2(\RR^d))^{N(1+d)} $, with $\Lambda^s=(\Id-\Delta)^{s/2}$. Then one has
\begin{equation}\label{FSV-Hs} \partial_t\W^s+\frac1\r\B^x[\V]\partial_x\W^s+\frac1\r\B^y[\V]\partial_y\W^s=\Lambda^s\R+\big[\Lambda^s,\frac1\r\B^x[\V]\big]\partial_x\W+\big[\Lambda^s,\frac1\r\B^y[\V]\big]\partial_x\W .
\end{equation}
We shall apply Proposition~\ref{P.energy-estimate-L2} to the above system, thanks to the standard tools on Sobolev spaces recalled in Section~\ref{S.product}. Notice first that since since $s>d/2+1$, Sobolev embedding yields
\[ \norm{\V}_{W^{1,\infty}}\ \leq\ C\ \norm{\V}_{H^s}, \]
so that we only need to estimate the commutator on the right-hand-side of~\eqref{FSV-Hs} to apply Proposition~\ref{P.energy-estimate-L2}. Since $\Lambda^s$ commutes with $\B^x[\z]$, one has
\[ \big[\Lambda^s,\frac1\r\B^x[\V]\big]\partial_x\W \ = \ \frac1\r\big[\Lambda^s,\B^x[\V]-\B^x[\z]\big]\partial_x\W.\]
Now, thanks to the product and commutator estimates recalled in Section~\ref{S.product}, and following the proof of Lemma~\ref{L.B}, one easily checks that, for any $t\in\into{T}$,
\[ \norm{\big[\Lambda^s,\frac1\r\B^x[\V]\big]\partial_x\W}_{L^2} =\frac1\r\norm{ \big[\Lambda^s,\B^x[\V]-\B^x[\z]\big]\partial_x\W}_{L^2}\leq C(\m,h_0^{-1} ,\norm{\V}_{H^s}) \norm{\V}_{H^s}\norm{\partial_x \W}_{H^{s-1}}.\]
Obviously, since $\B^y[\V]=\Q((0,1))^{-1}\B^x[\Q((0,1))\V]\Q((0,1))$ (by~\eqref{B-rot} ), one has
\[ \norm{\big[\Lambda^s,\frac1\r\B^y[\V]\big]\partial_y\W}_{L^2} \leq C(\m,h_0^{-1} ,\norm{\V}_{H^s}) \norm{\V}_{H^s}\norm{\partial_y \W}_{H^{s-1}}.\]

One could apply the $L^2$ estimate of Proposition~\ref{P.energy-estimate-L2} to $\V,\W^s,\Lambda^s\R$ satisfying~\eqref{FSV-Hs}, but one obtains a slightly stronger result by stepping back and using directly the differential inequality of the proof, namely~\eqref{diff-inequality}:
\[
\frac12\frac{\dd}{\dd t}\big( \mS \W^s ,\W^s\big)_{L^2}\leq C_1 \norm{\W^s}_{L^2}^2 +  \norm{\mR [\R,\V,\W]}_{L^2}\norm{\W^s}_{L^2},
\]
with $C_1=C(\m,h_0^{-1},\norm{\V}_{L^\infty})\times( \norm{\V}_{W^{1,\infty}}+\r\norm{\partial_t\V}_{L^\infty}+\norm{\Pis\partial_t\V}_{L^\infty})$, and
\[ \norm{\mR [\R,\V,\W]}_{L^2}\leq C_0 \norm{\R }_{H^s} +C_s \norm{\W }_{H^s} , \]
where $C_0=C(\m,h_0^{-1},\norm{\V}_{W^{1,\infty}})$, and $C_s=C(\m,h_0^{-1} ,\norm{\V}_{H^s}) \norm{\V}_{H^s}$.

Estimate~\eqref{Hs-estimate} follows from Gronwall's Lemma and the coercivity of $\mS$, \ie~\eqref{S-coercive} in Lemma~\ref{L.S}. Again, this estimate is proved for sufficiently regular $\W^s=\Lambda^s\W$, but may be extended to general $\W^s\in C^0(\into{T};L^2(\RR^d))^{N(1+d)}$ solution to~\eqref{FSV-Hs} by density and thanks to a standard regularization process. 
This concludes the proof of Proposition~\ref{P.energy-estimate-Hs}.
\end{proof}

\section{Well-posedness and stability estimates}\label{S.WP}
In this section, we collect the information gathered in the previous sections, which quickly yield Theorem~\ref{T.WP}, as well as the propagation of well-prepared initial data (see Proposition~\ref{P.well-prepared initial data}).

Let us first recall that Proposition~\ref{P.WP-naive} offers the existence and uniqueness of a (maximal) solution to~\eqref{FS-U-mr}, for sufficiently regular initial data. Our results give additional information on the large time behaviour of these solutions, using in particular the energy estimates of Proposition~\ref{P.energy-estimate-L2} and~\ref{P.energy-estimate-Hs}. These estimates, however, are based on a different formulation of the equations, namely~\eqref{FS-V-mr}, which we claimed to be equivalent. 
Let us precisely state below in which sense.
\begin{Lemma}\label{L.UtoV}
Let $\U\eqdef (\r^{-1}\zeta_1,\zeta_2,\dots,\zeta_N,u^x_1,\dots,u^x_N,u^y_1,\dots,u^y_N)^\top\in C^0(\into{T};H^s(\RR^d))^{N(1+d)}$ with $s>d/2+1$, satisfying~\eqref{C.depth}. Then $\V\eqdef (\r^{-1}\zeta_1,\zeta_2,\dots,\zeta_N,v^x_2,\dots,v^x_N,w^x,v^y_2,\dots,v^y_N,v^y)^\top$, defined by~\eqref{UtoV}, satisfies $\V\in C^0(\into{T};H^s(\RR^d))^{N(1+d)}$ and
\[ \norm{\V}_{H^s}\leq C(\m,h_0^{-1},\norm{\U}_{H^s}) \norm{\U}_{H^s}.\]
Conversely, if $\V\in C^0(\into{T};H^s(\RR^d))^{N(1+d)}$ satisfies~\eqref{C.depth}, then the change of variables~\eqref{VtoU} defines $\U\in C^0(\into{T};H^s(\RR^d))^{N(1+d)}$ and
\[ \norm{\U}_{H^s}\leq C(\m,h_0^{-1},\norm{\V}_{H^s}) \norm{\V}_{H^s}.\]
Moreover, if $\U$ as above is a strong solution to~\eqref{FS-U-compact} (or, equivalently,~\eqref{FS-U-mr}), then $\V$ is a strong solution to~\eqref{FS-V-compact} (or, equivalently,~\eqref{FS-V-mr}); and conversely.
\end{Lemma}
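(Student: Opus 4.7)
The plan is to split the proof into a functional part (existence and continuity of the change of variables as a map between Sobolev-regular functions, with the stated norm estimates) and an algebraic part (pointwise equivalence of the two PDE systems under the change of variables). Both parts rely only on $H^s$ being a Banach algebra for $s>d/2$ together with the Moser product and composition estimates recalled in Appendix~\ref{S.product}; no uniformity in $\r$ is required here.

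For the forward map $F^{-1}:\U\mapsto\V$, I observe that each component of $\V$ is a polynomial of degree at most two in the components of $\U$, with coefficients depending only on $\delta_n$ and $\gamma_n$: indeed $\v_i=\gamma_i\u_i-\gamma_{i-1}\u_{i-1}$ is linear and $\w=\sum_{n=1}^N h_n\u_n$ with $h_n=\delta_n+\zeta_n-\zeta_{n+1}$ is bilinear. The tame $H^s$ product estimate then yields directly $\norm{\V}_{H^s}\leq C(\m,\norm{\U}_{H^s})\norm{\U}_{H^s}$, and continuity in time follows from continuity of sums and products in $H^s$. No depth condition is needed for this direction. For the inverse map $F:\V\mapsto\U$ given by~\eqref{VtoU}, each $\u_n$ is a polynomial in the components of $\V$ and of $h_1,\dots,h_N$ divided by $\Sigma(\V)\eqdef \sum_{i=1}^N\gamma_i^{-1}h_i$. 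The assumption~\eqref{C.depth} combined with $\gamma_i^{-1}\leq C(\m)$ ensures $\Sigma(\V)\geq c(\m)h_0>0$ pointwise, so the Moser composition estimate applied to the smooth function $z\mapsto 1/z$ on a neighbourhood of $[c(\m)h_0,\infty)$ yields $\norm{1/\Sigma(\V)}_{H^s}\leq C(\m,h_0^{-1},\norm{\V}_{H^s})$, and the algebra property gives the desired bound $\norm{\U}_{H^s}\leq C(\m,h_0^{-1},\norm{\V}_{H^s})\norm{\V}_{H^s}$, together with time continuity.

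For the equivalence of the PDE systems, I suppose $\U\in C^0(\into{T};H^s)^{N(1+d)}$ is a strong solution of~\eqref{FS-U-mr}; then the functional mapping just established ensures $\V\in C^0(\into{T};H^s)^{N(1+d)}$ and $\partial_t\V\in C^0(\into{T};H^{s-1})^{N(1+d)}$, so all the identities below are verified in $H^{s-1}$. The equations for $\r^{-1}\zeta_1$ and $\zeta_n$ ($n\geq 2$) are identical in the two systems. For $\v_i$ with $i\geq 2$, I form the combination $\gamma_i\partial_t\u_i-\gamma_{i-1}\partial_t\u_{i-1}$ using the momentum equations of~\eqref{FS-U-mr}: because the coefficient of $\nabla\zeta_1$ in $\gamma_n$ times the $n^{\rm th}$ velocity equation equals $\gamma_1/(1-\gamma_1)$, independent of $n$, the singular contribution cancels, and the lower-index pressure terms $r_j\nabla\zeta_j$ with $j<i$ cancel as well, leaving exactly $-r_i\nabla\zeta_i-\gamma_i(\u_i\cdot\nabla)\u_i+\gamma_{i-1}(\u_{i-1}\cdot\nabla)\u_{i-1}$ as required. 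For $\w=\sum_n h_n\u_n$, I apply the product rule, substitute $\partial_t h_n=-\nabla\cdot(h_n\u_n)$ (which follows from the telescoping mass equations), and substitute the momentum equation for each $\partial_t\u_n$. After regrouping, the advective contribution $\sum_n h_n(\u_n\cdot\nabla)\u_n+\u_n\nabla\cdot(h_n\u_n)$ collects into $\sum_n\nabla\cdot(h_n\u_n\otimes\u_n)$, and the pressure contributions yield, by interchanging the order of summation, $\big(\sum_{j=1}^N\gamma_j^{-1}h_j\big)\frac{\gamma_1}{1-\gamma_1}\nabla\zeta_1+\sum_{i=2}^N\big(\sum_{j=i}^N\gamma_j^{-1}h_j\big)r_i\nabla\zeta_i$, matching the last equation of~\eqref{FS-V-mr}. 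The converse implication is obtained by running the same manipulation backwards, using that $F\circ F^{-1}=\Id$ on $\VV$.

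The only obstacle is the bookkeeping of the telescoping double sum yielding the coefficient $\sum_{j=i}^N\gamma_j^{-1}h_j$ of $r_i\nabla\zeta_i$ in the $\w$-equation; once this identity is carried out, the rest of the proof consists of standard Sobolev calculus and pointwise algebraic rearrangements, which is why I expect no genuine analytic difficulty.
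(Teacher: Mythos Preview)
Your proof is correct and follows essentially the same approach as the paper: the Sobolev algebra property and Moser-type composition estimates handle the change of variables, and the PDE equivalence is a direct algebraic computation (you actually give more detail than the paper on the cancellation in the $\v_i$ equations and the double-sum interchange for $\w$). One minor imprecision worth fixing: $1/\Sigma(\V)$ is not itself in $H^s$ since it tends to the nonzero constant $(\sum_i\gamma_i^{-1}\delta_i)^{-1}$ at infinity, so the Moser estimate as stated does not apply; what you actually need---and what the paper's Appendix invokes---is that \emph{multiplication} by $1/\Sigma(\V)$ is bounded on $H^s$, which follows by writing $f/\Sigma = f/c - f\cdot(\Sigma-c)/(c\Sigma)$ with $c=\sum_i\gamma_i^{-1}\delta_i$ and applying the composition estimate to the $H^s$ function $(\Sigma-c)/\Sigma$.
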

\begin{proof}
Notice that $\U\in C^0(\into{T};H^s(\RR^d))^{N(1+d)}$ implies immediately $\V\in C^0(\into{T};H^s(\RR^d))^{N(1+d)}$ since $H^\sigma(\RR^d)$ is an algebra for any $\sigma>d/2$. The converse is also true since the multiplication by $\left( \sum_{i=1}^N \gamma_i^{-1} h_i\right)^{-1}$  is continuous from $H^\sigma(\RR^d)$ to $H^\sigma(\RR^d)$; see Section~\ref{S.product}.
It follows, by continuous Sobolev embedding, that all the terms in~\eqref{FS-U-mr} and~\eqref{FS-V-mr} as well as in the calculations below are well-defined in $C^0(\into{T}\times\RR^d)^{N(1+d)}$.

The evolution equations for $\v_n$ in~\eqref{FS-V-mr} are straightforwardly deduced from the ones for $\u_n$ in~\eqref{FS-U-mr}. The evolution equation for $\w$ follows from
\[ \partial_t\w= \sum_{i=1}^N h_i  \partial_t \u_i +  \u_i \partial_t (\zeta_i-\zeta_{i+1}) =  \sum_{i=1}^N h_i  \partial_t \u_i -  \u_i  \nabla\cdot(h_i\u_i).\]
Plugging the expression for $\partial_t\u_i$ in~\eqref{FS-U-mr}, and using
$ \nabla\cdot (h_i \u_i \otimes \u_i)=h_i(\u_i\cdot \nabla)\u_i+\u_i \nabla\cdot(h_i\u_i)$, yields immediately the desired expression of the evolution equation for $\w$ in~\eqref{FS-V-mr}.

The corresponding result concerning the compact matricial formulation of the systems, and in particular~\eqref{AtoB}, is obvious by the chain rule.

This proves the first part of the statement. The second part is identical since all these calculations are reversible: the Jacobian associated to the change of variables is invertible; see~\eqref{def-J}.
\end{proof}

\subsection{Large time well-posedness; proof of Theorem~\ref{T.WP}}

By Proposition~\ref{P.WP-naive}, one can set $\nu_0=C(\m,h_0^{-1})$ such that if~\eqref{C.hyp-mr} holds with $\nu_0$, then
there exists a unique $\U\in C^0(\into{T_{\rm max}};H^s(\RR^d))^{N(1+d)}$ strong solution to~\eqref{FS-U-compact} and $\U\id{t=0}=\U^\init$. 
By Lemma~\ref{L.UtoV}, the change of variable~\eqref{UtoV} defines $\V\in C^0(\into{T_{\rm max}};H^s(\RR^d))^{N(1+d)} $ and $\V$ satisfies
\[ \partial_t\V+\frac1\r\B^x[\V]\partial_x\V+\frac1\r\B^y[\V]\partial_y\V=\z,\]
and $\norm{\V\id{t=0}}_{H^s}=C(\m,h_0^{-1},\norm{\U^\init}_{H^s})\norm{\U^\init}_{H^s}$. 
Let us denote
\[ T^\star(M)=\sup\{ t\in\into{T_{\max}} , \quad \Norm{\V}_{L^\infty(0,t;H^s)} \leq M \text{ and \eqref{C.depth-mr}-\eqref{C.hyp-mr} holds with $h_0/2,2\nu_0$}\}.\]
We restrict our discussion below to $M>\norm{\V\id{t=0}}_{H^s}$, so that (by continuity) $T^\star(M)>0$.

Using the system satisfied by $\V$,~\eqref{C-estimates-1} in Corollary~\ref{C.prop} as well as Sobolev embeddings, one checks
\[\bNorm{\V}_s\eqdef  \Norm{\V}_{L^\infty(0,T; H^s)} + \r\Norm{\partial_t\V}_{L^\infty((0,T)\times\RR^d)} +\Norm{\Pis\partial_t\V}_{L^\infty((0,T)\times\RR^d)}\leq C(\m,h_0^{-1},M)M,\]
for any $T\in\into{T^\star(M)}$; recall $\Pif$ is defined in~\eqref{def-Pi}.

In particular, one has for any $T\in\into{T^\star(M)}$, 
\[ \sup_{(t,\x)\in\intf{T}\times \RR^d}\r \norm{\partial_t(\r^{-1}\zeta_1)}+\sum_{n=2}^N \norm{\partial_t\zeta_n}+\norm{\partial_t v_n^x}+\norm{\partial_t v_n^y}\leq \bNorm{\V}_s\leq C(\m,h_0^{-1},M)M,\]
from which we deduce
\[ \forall n\in \{1,\dots,N\}, \quad \left| h_n(t,\cdot) -h_n(0,\cdot) \right| \leq  \int_0^T \left|  \zeta_{n+1}-\zeta_n\right|\leq C(\m,h_0^{-1},M)M \times T \]
and, similarly,
\[ \forall n\in \{2,\dots,N\}, \quad \norm{v_n^x}+\norm{ v_n^y} \leq  \norm{ v_n^x\id{t=0}}+\norm{ v_n^y\id{t=0}}+  C(\m,h_0^{-1},M)M \times T
.\]
It follows that, for given $M$, one can set $\nu_0,\r_0^{-1}=C(\m,h_0^{-1},M)$ such that if $\r\in(0,\r_0)$ and~\eqref{C.hyp-mr} holds with $\nu_0$, then $\V$ satisfies the assumptions of Proposition~\ref{P.energy-estimate-Hs} for any $t\in \into{\min\{T^\star(M),T^\sharp(M)\}}$ with
\[(T^\sharp(M))^{-1} =  C(\m,h_0^{-1},M)M.\]
We thus deduce the energy estimate
\[ \forall t\in\into{\min\{T^\star(M),T^\sharp(M)\}}, \quad \norm{\V}_{H^s}\leq   \big(C_0(0)\norm{\U^\init}_{H^s}+ C_M  M t \big)   e^{C_M M t} ,\]
with $C_0=C(\m,h_0^{-1},\norm{\U^\init}_{W^{1,\infty}})$ and $C_M=C(\m,h_0^{-1},M)$. In particular this shows that one can choose $M=2 C_0(0)\norm{\U^\init}_{H^s}$ such that 
\[ T^\star(M)^{-1}\leq \max\left\{T_{\max}^{-1},C(\m,h_0^{-1},\norm{\U^\init}_{H^s})\norm{\U^\init}_{H^s}\right\}.\]

Going back to the original variables through~\eqref{VtoU} and by Lemma~\ref{L.UtoV}, this shows that one can restrict $\nu_0=C(\m,h_0^{-1},\norm{\U^\init}_{H^s})$ and the time interval $\into{T}$, with $T^{-1}$ bounded as above, so that $\norm{\U}_{H^s}$ is uniformly bounded and~\eqref{C.depth-U}-\eqref{C.hyp-U} remain satisfied.
By continuity, uniqueness of the maximal solution and thanks to the blow-up criteria in Proposition~\ref{P.WP-naive}, we deduce that one can set
\[T_{\max}^{-1}\leq T^{-1}\leq C(\m,h_0^{-1},\norm{\U^\init}_{H^s})\norm{\U^\init}_{H^s}.\]
This concludes the proof of Theorem~\ref{T.WP}.

\subsection{Propagation of well-prepared initial data}
\begin{Proposition}\label{P.well-prepared initial data}
Let $s>d/2+1$ and $\U^\init\in H^{s}(\RR^d)^{N(1+d)}$ satisfying~\eqref{C.depth-mr}-\eqref{C.hyp-mr} as in Theorem~\ref{T.WP}, and denote $\U\eqdef(\r^{-1}\zeta_1,\zeta_2,\dots,\zeta_N,u_1^x,\dots,u_N^x,u_1^y,\dots,u_N^y)^\top\in C^0(\into{T};H^{s}(\RR^d))^{N(1+d)}$ the solution to~\eqref{FS-U-mr} and $\U\id{t=0}=\U^\init$. If $\U^\init$ satisfies initially
\[ \norm{\U^\init}_{H^s}+\frac1\r \norm{\nabla(\r^{-1}\zeta_1^\init)}_{L^2}+\frac1\r \norm{\sum_{n=1}^N\nabla\cdot (h_n^\init\u_n^\init)}_{L^2}\leq \ M_0,\]
then there exists $C_0= C(M_0,\m,h_0^{-1})$ such that 
\[ \forall t\in\into{T}, \quad\ \norm{\U}_{H^s}+ \frac1\r\norm{\nabla(\r^{-1}\zeta_1)}_{L^2}+\frac1\r \norm{\sum_{n=1}^N\nabla\cdot (h_n\u_n)}_{L^2}\ \leq  C_0 M_0 \exp(C_0 M_0 t),\]
uniformly with respect to $\r\in(0,\r_0)$. 
\end{Proposition}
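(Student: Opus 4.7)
The plan is to apply the $L^2$ energy estimate of Proposition~\ref{P.energy-estimate-L2} to $\W\eqdef \partial_t \V$, where $\V$ is associated to $\U$ through the change of variables~\eqref{UtoV}, and then to read off the desired quantities by inspection of the first row and the $\w$-row of~\eqref{FS-V-mr}. The heuristic is that the well-prepared hypothesis amounts precisely to a uniform-in-$\r$ bound on $\norm{\W\id{t=0}}_{L^2}$; the uniform $L^2$ estimate on $\W$ then propagates this bound in time.

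Differentiating~\eqref{FS-V-compact} in time, we see that $\W$ solves the linear symmetric-hyperbolic system
\[
 \partial_t\W \ + \ \frac1\r \B^x[\V]\partial_x\W\ +\ \frac1\r \B^y[\V]\partial_y\W\ =\ \R,
\]
with $\R\eqdef-\frac1\r \big(d\B^x[\V]\cdot\W\big)\partial_x\V-\frac1\r \big(d\B^y[\V]\cdot\W\big)\partial_y\V$. The key observation is that the Lipschitz bound~\eqref{B-estimates-4} in Lemma~\ref{L.B} ensures that the tensors $\frac1\r d\B^x[\V],\frac1\r d\B^y[\V]$ are uniformly bounded in $\r$ by $C(\m,h_0^{-1},\norm{\V}_{L^\infty})$. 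Combined with the Sobolev embedding $\norm{\partial_x\V}_{L^\infty}\leq C\norm{\V}_{H^s}$, this yields $\norm{\R}_{L^2}\leq C(\m,h_0^{-1},\norm{\V}_{H^s})\norm{\V}_{H^s}\norm{\W}_{L^2}$ uniformly in $\r$. The hypotheses of Proposition~\ref{P.energy-estimate-L2} are met by Theorem~\ref{T.WP}, so applying it together with Gronwall's inequality gives $\norm{\W(t)}_{L^2}\leq C(M_0)\norm{\W\id{t=0}}_{L^2}\exp(C(M_0) t)$.

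To bound $\norm{\W\id{t=0}}_{L^2}$, split $\W\id{t=0}=\Pis\W\id{t=0}+\Pif\W\id{t=0}$. The slow part is controlled using~\eqref{B-estimates-1}: since $\frac1\r \Pis\B^x[\V^\init]$ and $\frac1\r\Pis\B^y[\V^\init]$ are uniformly bounded, one has $\norm{\Pis\W\id{t=0}}_{L^2}\leq C(\m,h_0^{-1},\norm{\V^\init}_{H^s})\norm{\V^\init}_{H^s}$. The fast part is controlled by direct inspection of~\eqref{FS-V-mr} at $t=0$: the $\r^{-1}\zeta_1$-row gives $\partial_t(\r^{-1}\zeta_1)\id{t=0}=-\frac1\r\sum_n\nabla\cdot(h_n^\init\u_n^\init)$, and the $\w$-row gives
\[ \partial_t\w\id{t=0}\ =\ -\gamma_1\Big(\sum_j\gamma_j^{-1}h_j^\init\Big)\frac1\r \nabla(\r^{-1}\zeta_1^\init)+\O_{L^2}\big(\norm{\V^\init}_{H^s}^2\big),\]
both of which are bounded in $L^2$ by $C(\m,h_0^{-1},M_0)M_0$ under the well-prepared hypothesis.

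The conclusion is then immediate: by construction $\frac1\r\norm{\sum_n\nabla\cdot(h_n\u_n)}_{L^2}$ coincides with the $L^2$ norm of the first scalar component of $\W$, hence is bounded by $\norm{\W}_{L^2}$; and solving for the singular term in the $\w$-row of~\eqref{FS-V-mr} yields $\frac1\r\norm{\nabla(\r^{-1}\zeta_1)}_{L^2}\leq C(\m,h_0^{-1},\norm{\V}_{H^s})\big(\norm{\partial_t\w}_{L^2}+\norm{\V}_{H^s}^2\big)\leq C(M_0)\big(\norm{\W}_{L^2}+1\big)$. The one subtle point on which the whole argument hinges is the uniform-in-$\r$ control of the source $\R$, which rests precisely on the structural estimate~\eqref{B-estimates-4} asserting that the singular part of $\B$ is essentially independent of $\V$ --- the very feature that made the uniform analysis of Section~\ref{S.energy-estimates} possible in the first place.
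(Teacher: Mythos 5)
Your proposal is correct and follows essentially the same route as the paper's proof: differentiate the system~\eqref{FS-V-compact} in time, apply the uniform $L^2$ energy estimate of Proposition~\ref{P.energy-estimate-L2} to $\W=\partial_t\V$ (the source term being uniformly controlled precisely because of the $\r$-prefactor in the Lipschitz estimate~\eqref{B-estimates-4}, equivalently~\eqref{C-estimates-3}), observe that the well-prepared hypothesis is exactly a uniform bound on $\norm{\W\id{t=0}}_{L^2}$, and recover the singular quantities from the first and last rows of~\eqref{FS-V-mr}. The only (minor) point the paper makes explicit that you leave implicit is that $\partial_t\V\in C^0(\into{T};H^{s-1})\subset C^0(\into{T};L^2)$, so that Proposition~\ref{P.energy-estimate-L2} indeed applies to $\W$.
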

\begin{proof}
Let us denote by $\U\in C^0(\into{T};H^{s}(\RR^d))^{N(1+d)}$ the solution to~\eqref{FS-U-mr} and $\U\id{t=0}=\U^\init$ defined by Theorem~\ref{T.WP}; and by $\V\in C^0(\into{T};H^{s}(\RR^d))^{N(1+d)} $ the associated solution to~\eqref{FS-V-compact}, namely
\[ \partial_t\V+\frac1\r\B^x[\V]\partial_x\V+\frac1\r\B^y[\V]\partial_y\V=\z,\]
obtained through the change of variables~\eqref{UtoV} (see Lemma~\ref{L.UtoV}). Finally we denote $\W =\partial_t\V$. Using the above equation and the product estimates recalled in Section~\ref{S.product}, one has immediately $\W\in C^0(\into{T};H^{s-1}(\RR^d))^{N(1+d)}\subset C^0(\into{T}\times\RR^d) $.

Differentiating the above system of equations yields
\[ \partial_t\W+\frac1\r\B^x[\V]\partial_x\W+\frac1\r\B^y[\V]\partial_y\W=  -\frac1\r  \partial_t(\B^x[\V])\partial_x\V -\frac1\r  \partial_t(\B^y[\V])\partial_y\V .\]

By construction (see the proof of Theorem~\ref{T.WP} above), one can restrict $\r_0^{-1},\nu_0=C(\m,h_0^{-1},\norm{\U^\init}_{H^s})$ so that $\V$ satisfies the assumptions of Proposition~\ref{P.energy-estimate-L2}, namely~\eqref{C.depth}-\eqref{C.hyp-V} for $t\in\into{T}$; and one has
\begin{equation}\label{est-V-WP}
\Norm{\V}_{L^\infty(0,T;W^{1,\infty})} \leq C \Norm{\V}_{L^\infty(0,T;H^{s})}\leq C(\m,h_0^{-1},M_0)M_0.
\end{equation}
By estimate~\eqref{C-estimates-1} in Corollary~\ref{C.prop}, one has
\[ \r\Norm{\partial_t\V}_{L^\infty((0,T)\times\RR^d)}+\Norm{\Pis\partial_t\V}_{L^\infty((0,T)\times\RR^d)} \leq C(\m,h_0^{-1},M_0)M_0.\]
What is more, the additional smallness assumption on $\nabla\zeta_1^\init,\nabla\cdot\w^\init$ yields
\[ \norm{\W\id{t=0}}_{L^2}\leq C(\m,h_0^{-1},M_0)M_0,\]
uniformly for $\r\in(0,\r_0)$.

Finally, by~\eqref{C-estimates-3} in Corollary~\ref{C.prop}, one has
\[\norm{\partial_t(\B^x[\V])\partial_x\V }_{L^2}+\norm{ \partial_t(\B^y[\V])\partial_y\V}_{L^2}\leq \r\ C(\m,h_0^{-1},\norm{\V}_{L^{\infty}}) \norm{\V}_{W^{1,\infty}}\norm{\partial_t \V}_{L^2}.\]

Altogether, after applying Proposition~\ref{P.energy-estimate-L2} with $\W=\partial_t\V$, one deduces
\begin{equation}\label{est-dtV-WP}
\norm{\partial_t\V}_{L^2}(t)\leq C_0 M_0 e^{C_0 M_0 t} \ \norm{\W\id{t=0}}_{L^2}+ C_0 \int_0^t e^{C_0  M_0 (t-t')}\norm{\partial_t \V}_{L^2}(t') \ \dd t',
\end{equation}
with $C_0=C(\m,h_0^{-1},M_0)$. Applying Gronwall's Lemma to $\norm{\partial_t\V}_{L^2}(t)\exp(-C_0 M_0 t) $ yields
\[ \forall t\in \into{T} , \quad \norm{\partial_t\V }_{L^2}(t) \leq C_0 M_0 \exp(C_0 M_0 t), \]
for $C_0=C(\m,h_0^{-1},M_0)$ sufficiently large.

Using again the system of equations satisfies by $\V$, namely~\eqref{FS-V-mr}, estimates~\eqref{est-V-WP} and~\eqref{est-dtV-WP} yield
\[ \frac1\r\norm{\r^{-1}\nabla\zeta_1}_{L^2}+\frac1\r\norm{\nabla\cdot\w}_{L^2}\leq C_0 M_0 \exp(C_0 M_0 t)\]
for $C_0=C(\m,h_0^{-1},M_0)$ and uniformly with $\r\in(0,\r_0)$. Proposition~\ref{P.well-prepared initial data} is proved.
\end{proof}

\section{Convergence results}\label{S.convergence}

In this section, we investigate the asymptotic behaviour of the previously obtained solutions in the limit $\r\to0$. We first show that the solutions of the free-surface system~\eqref{FS-U-mr} converge weakly towards solutions of the rigid-lid system~\eqref{RL}. Strong convergence results are then obtained, first by assuming that the initial data is well-prepared, and then by approaching the oscillatory ``defect'' through rapidly propagating acoustic waves.

\subsection{Weak convergence: the rigid-lid limit}
Our first (weak) convergence result is the following.
\begin{Proposition}\label{P.weak-convergence}
As $\r\to0$, let $\U^\init_\r\to\U^\init \in H^{s}(\RR^d)^{N(1+d)}$ satisfying~\eqref{C.depth-mr}-\eqref{C.hyp-mr}. Denote, for $\r$ sufficiently small, $\U_\r\eqdef \left(\zeta_{2,\r},\dots,\zeta_{N,\r},\u_{1,\r},\dots,\u_{N,\r}\right)^\top \in C^0(\into{T};H^s(\RR^d))^{N(1+d)-1}$ the solution to~\eqref{FS-U-mr} with $U_\r\id{t=0}=\U^\init_\r$. Then, as $\r\to 0$, $\U_\r$ converges weakly (in the sense of distributions and up to a subsequence) towards $\U^\RL\eqdef (\zeta_2,\dots,\zeta_N,\u_1,\dots,\u_N)^\top\in L^\infty(0,T;H^s(\RR^d))^{N(1+d)-1}$ a solution of the rigid-lid system~\eqref{RL}, with initial data 
\[ \U^\RL\id{t=0}=\left(\zeta_2^\init,\dots,\zeta_N^\init,\u_1^\init-\delta^{-1}\Pi\w^\init,\dots,\u_N^\init-\delta^{-1}\Pi\w^\init\right)^\top,\]
where $\delta\eqdef\sum_{n=1}^N \delta_n$, $\w^\init\eqdef\sum_{n=1}^N(\delta_n+\zeta_n^\init-\zeta_{n+1}^\init)\u_n^\init$  with convention $\zeta_1^\init=\zeta_{N+1}^\init=0$ and $\Pi\eqdef\nabla\Delta^{-1}\nabla\cdot$ is the orthogonal projection onto irrotational vector fields.
\end{Proposition}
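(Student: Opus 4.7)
The plan is to combine the uniform bounds of Theorem~\ref{T.WP} with compactness arguments for the ``slow'' modes and cancellations for the ``fast'' modes, exploiting that the singular contributions to the velocity equations are pure gradients (absorbed into the rigid-lid pressure) and that the fast acoustic component is irrotational (so its quadratic self-interaction is again a gradient).

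First, Theorem~\ref{T.WP} furnishes, for $\r$ small, a uniform time $T>0$ and a uniform bound $\norm{\U_\r}_{L^\infty(0,T;H^s)}\leq C$. In particular $\zeta_{1,\r}=\r\,(\r^{-1}\zeta_{1,\r})\to 0$ strongly in $L^\infty(0,T;H^s)$, and up to a subsequence Banach--Alaoglu extracts weak-$\ast$ limits $\zeta_{n,\r}\rightharpoonup^\ast\zeta_n$ ($n\geq 2$) and $\u_{n,\r}\rightharpoonup^\ast \u_n$. To upgrade this to strong compactness on the slow modes, I switch to the $\V$-variables~\eqref{FS-V-mr}. Writing $\Pi\eqdef\nabla\Delta^{-1}\nabla\cdot$, the evolution equations for $\zeta_n$ ($n\geq 2$), for the shear velocities $\v_n$ ($n\geq 2$), and for $(\Id-\Pi)\w_\r$ all have right-hand sides uniformly bounded in $H^{s-1}$: the first two are non-singular by Lemma~\ref{L.B}, and the singular gradient $\nabla\zeta_{1,\r}$ in the $\w$-equation is annihilated by $(\Id-\Pi)$. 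With the uniform $H^s$ bound in hand, Aubin--Lions then yields strong convergence of these slow components in $C^0(\intf{T};H^{s'}_{\rm loc})$ for any $s'<s$. The fast component $\Pi\w_\r$ itself tends weakly to zero by a direct argument: the $\zeta_{1,\r}$-equation gives $\nabla\cdot\w_\r=-\partial_t\zeta_{1,\r}\to 0$ in $\mathcal{D}'((0,T)\times\RR^d)$ (since $\zeta_{1,\r}\to 0$ strongly), hence $\Pi\w_\r=\nabla\Delta^{-1}\nabla\cdot\w_\r\rightharpoonup 0$.

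Passing to the limit is then a structural check. The $\zeta_{1,\r}$-equation produces in the limit the rigid-lid constraint $\nabla\cdot\w=0$. The equations for $\zeta_n$ ($n\geq 2$) pass easily, each product $h_i\u_i$ pairing a strongly convergent factor (thanks to $\zeta_{1,\r}\to 0$ and to the strong convergence of $\zeta_{n,\r}$) with a weakly convergent one. For the velocity equations, I use the leading-order decomposition $\u_{n,\r}=\u_n^{\rm sl}+\delta^{-1}\Pi\w_\r+o(1)$ read off from~\eqref{VtoU}, where $\u_n^{\rm sl}$ is built from the slow variables $(\Id-\Pi)\w$ and $\v_j$ and therefore converges strongly. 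In $(\u_{n,\r}\cdot\nabla)\u_{n,\r}$, the slow/slow piece converges to $(\u_n^\RL\cdot\nabla)\u_n^\RL$; the slow/fast cross-terms vanish because $\Pi\w_\r\rightharpoonup 0$ while the other factor is strongly convergent; and the fast/fast contribution $(\Pi\w_\r\cdot\nabla)\Pi\w_\r=\tfrac12\nabla|\Pi\w_\r|^2$ is a pure gradient by irrotationality, whose weak limit is therefore a gradient that can be absorbed into the rigid-lid pressure $\nabla p$---as is the singular gradient $\gamma_n^{-1}\gamma_1(1-\gamma_1)^{-1}\nabla\zeta_{1,\r}$. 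The initial data match comes from the instantaneous acoustic transient: the weak limit at $t=0^+$ retains only the slow part $\u_n^\init-\delta^{-1}\Pi\w^\init$ of the prescribed initial velocity, which is precisely the initial datum for $\u_n^\RL$.

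The main obstacle is precisely this quadratic self-interaction of the fast mode in $(\u_n\cdot\nabla)\u_n$: a naive passage to the limit is not available since both factors converge only weakly, and the saving structural fact---that $\Pi\w$ is irrotational, so the fast/fast piece is a total gradient absorbed by the rigid-lid pressure---is what reconciles the weak limit with the rigid-lid system rather than yielding some anomalous effective nonlinearity.
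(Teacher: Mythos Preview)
Your proposal is correct and follows essentially the same route as the paper: uniform bounds from Theorem~\ref{T.WP}, Aubin--Lions compactness on the slow components, weak convergence $\Pi\w_\r\rightharpoonup 0$ from the first conservation equation, and the key structural observation that the fast/fast self-interaction $(\Pi\w_\r\cdot\nabla)\Pi\w_\r=\tfrac12\nabla|\Pi\w_\r|^2$ is a pure gradient absorbed into the rigid-lid pressure. The only organizational difference is that you work throughout in the $\V$-variables and reconstruct $\u_n^{\rm sl}$ from~\eqref{VtoU}, whereas the paper stays in the $\U$-variables, obtains strong compactness directly for $\zeta_{n,\r}$, $\u_{n,\r}-\u_{n-1,\r}$, and $(\Id-\Pi)\u_{n,\r}$, and introduces $\tilde\u_{n,\r}\eqdef\u_{n,\r}-\delta^{-1}\Pi\w_\r$ to show this combination converges strongly in $C^0(\into{T};H^{s'}_{\rm loc})$; your initial-data identification, which you describe somewhat informally as ``retaining only the slow part at $t=0^+$'', is made precise in the paper by this strong-in-time convergence of $\tilde\u_{n,\r}$, which in particular pins down $\u_n\id{t=0}=\lim\tilde\u_{n,\r}\id{t=0}=\u_n^\init-\delta^{-1}\Pi\w^\init$.
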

\begin{proof}
Restricting $\r\in(0,\r_0)$ if necessary, $\U^\init_\r$ satisfies the hypotheses of Theorem~\ref{T.WP}. Thus one can define $\U_\r\eqdef (\zeta_{2,\r},\dots,\zeta_{N,\r},\u_{1,\r},\dots,\u_{N,\r})^\top \in C^0(\into{T};H^s(\RR^d))^{N(1+d)} $ from the solution to~\eqref{FS-U-mr} with initial data $\U\id{t=0}=\U^\init_\r$; and
\[ \Norm{\U_\r}_{L^\infty(0,T;H^s(\RR^d))^{N(1+d)}} \leq M,\]
with $T^{-1},M=C(\m,h_0^{-1},\norm{\U^\init}_{H^s})\norm{\U^\init}_{H^s}$.

Thus, by Banach-Alaoglu theorem on $L^\infty(0,T;H^s(\RR^d))=L^1(0,T;H^{-s}(\RR^d))'$, we can extract a weakly converging subsequence (in the sense of distributions), that we still denote $\U_\r$:
\[ \U_\r \rightharpoonup \U\eqdef (\zeta_2,\dots,\zeta_N,\u_1,\dots,\u_N)^\top,\]
with $\U\in L^\infty(0,T;H^s(\RR^d))^{N(1+d)}$. 

Let us first notice that since $\U_\r$ satisfies~\eqref{FS-U-mr} one has, uniformly for $\r\in (0,\r_0)$,
\[ \forall n\in\{2,\dots,N\}, \quad \Norm{ \partial_t \zeta_{n,\r}}_{L^\infty(0,T;H^{s-1}(\RR^d))}\leq C(\m,h_0^{-1},M) M.\]
Thus, since the embedding of $H^{s-1}(\RR^d)$ in $H^s(\RR^d)$ is locally compact and by Aubin-Lions lemma and Cantor's diagonal argument, one has (again up to the extraction of a subsequence) $ \zeta_{n,\r}  \to\zeta_n$ strongly in $C^0(\into{T};H^{s-1}_{\rm loc})$. It follows by the logarithmic convexity of Sobolev norms, that for any $s'<s$,
\[ \forall n\in\{2,\dots,N\}, \quad \zeta_{n,\r} \to \zeta_n \quad \text{ in $C^0(\into{T};H^{s'}_{\rm loc})$ } .\]
Thereafter, we fix $s'\in (1+d/2,s)$ so that $H^{s'-1}_{\rm loc}\subset C^0(\RR)$ and is an algebra. For the same reasons as above, we have also
\[ \forall n\in\{2,\dots,N\}, \quad \u_{n,\r}-\u_{n-1,\r} \to \u_{n}-\u_{n-1} \quad \text{ in $C^0(\into{T};H^{s'}_{\rm loc})^d$ } \]
and, using $(\Id-\Pi)\nabla\zeta_{1,\r}\equiv0$,
\[ \forall n\in\{1,\dots,N\}, \quad (\Id-\Pi) \u_{n,\r}\to (\Id-\Pi)\u_{n}\quad \text{ in $C^0(\into{T};H^{s'}_{\rm loc})^d$ }.\]

Let us now define
\[ \w_\r\eqdef \sum_{n=1}^N \big(\delta_n+\zeta_{n,\r} -\zeta_{n+1,\r}\big)\u_{n,\r}=\sum_{n=1}^N \delta_n\u_{n,\r} + \sum_{n=2}^N \zeta_{n,\r} (\u_{n,\r}-\u_{n-1,\r})+\r (\r^{-1}\zeta_{1,\r})\u_{1,\r}.\]
Notice now that all the (quadratic) nonlinear terms are strongly convergent, so that
\[ \w_\r\rightharpoonup \w \eqdef \sum_{n=1}^N \big(\delta_n+\zeta_{n} -\zeta_{n+1}\big)\u_{n} \]
and
\[  (\Id-\Pi) \w_\r\to (\Id-\Pi)\w\quad \text{ in $C^0(\into{T};H^{s'}_{\rm loc})^d$ } ,\]
with convention $\zeta_1=\zeta_{N+1}\equiv0$. 
Notice also that passing to the (weak) limit in the first equation of~\eqref{FS-U-mr} yields
\[ \nabla \cdot \w =0 \quad \text{ and thus }\quad  \Pi\w_\r  \rightharpoonup \z .\]

We now define
\[  \t \u_{n,\r} \eqdef  \u_{n,\r}-\delta^{-1}\Pi\w_\r \quad \text{ with } \quad \delta\eqdef\sum_{n=1}^N\delta_n .\]
Notice the identity
\[ \sum_{n=1}^N \big(\delta_n+\zeta_{n,\r} -\zeta_{n+1,\r}\big) \t \u_{n,\r} = \w_\r - \left(\sum_{n=1}^N h_{n,\r} \right) \delta^{-1}\Pi\w_\r  = (\Id-\Pi) \w_\r - \r(\r^{-1}\zeta_{1,\r}) \delta^{-1}\Pi\w_\r  \]
so that
 \[ \t\w_\r \eqdef \sum_{n=1}^N \big(\delta_n+\zeta_{n,\r} -\zeta_{n+1,\r}\big) \t \u_{n,\r}  \to (\Id-\Pi)\w \quad \text{ in $C^0(\into{T};H^{s'}_{\rm loc})$ }.\]
It follows, using the formula~\eqref{VtoU} with $\t \v_{n,\r}\eqdef \gamma_n \t\u_{n,\r}-\gamma_{n-1}\t\u_{n-1,\r}=\gamma_n (\u_{n,\r}-\u_{n-1,\r})+\r^2 r_n \t\u_{n-1,\r}$ and $\t\w_\r $, and since all the components converge strongly in $C^0(\into{T};H^{s'}_{\rm loc})$, that $\t \u_{n,\r} $ converges strongly as well. By virtue of uniqueness of the weak limit, one has
\[ \t \u_{n,\r} \to   \u_n \quad \text{ in $C^0(\into{T};H^{s'}_{\rm loc})$ }.\]
We conclude by plugging the decomposition $\u_{n,\r}=\t \u_{n,\r} +\delta^{-1}\Pi\w_\r $ into the evolution equations for velocities in~\eqref{FS-U-mr}. Notice the identity, using that $\delta^{-1}\Pi\w_\r $ is irrotational,
\[ (\u_{n,\r}\cdot\nabla)\u_{n,\r}=(\t \u_{n,\r}\cdot\nabla)\u_{n,\r}+(\u_{n,\r}\cdot\nabla)\t \u_{n,\r}-(\t\u_{n,\r}\cdot\nabla)\t \u_{n,\r}-\frac12\nabla\big(|\delta^{-1}\Pi\w_\r|^2 \big).\]
Thus the only (quadratic) term which does not involve at least one strongly convergent factor turns out to be an exact gradient and independent of $n$; and so is the unbounded (linear) component of the equation, namely $\frac1\r\nabla(\r^{-1}\zeta_{1,\r})$. This shows, passing to the weak limit all the other terms in the equation, that there exists $\nabla p$, independent of $n$, such that
\[ \forall n\in\{1,\dots,N\}, \quad \partial_t \u_n \ +\ \sum_{i=2}^n r_i \nabla\zeta_i\ + \  (\u_{n}\cdot\nabla)\u_n \ = \ -\nabla p . \]
It is straightforward to pass to the limit in the conservation of mass equations, so that
\[ \forall n\in\{2,\dots,N\}, \quad   \partial_{ t}{\zeta_n} \ + \ \sum_{i=n}^N\nabla\cdot (h_i\u_i) \ =\ 0,   \]
and we have already seen that $\nabla \cdot \w =0$. 

Thus $\U^\RL\eqdef (\zeta_2,\dots,\zeta_N,\u_1,\dots,\u_N)^\top\in L^\infty(0,T;H^s) \cap C^0(\into{T};H^{s'}_{\rm loc}) $ is a solution to~\eqref{RL}, and one checks immediately that
$ \U^\RL\id{t=0}=\left(\zeta_2^\init,\dots,\zeta_N^\init,\u_1^\init-\delta^{-1}\Pi\w^\init,\dots,\u_N^\init-\delta^{-1}\Pi\w^\init\right)^\top$.
This concludes the proof of Proposition~\ref{P.weak-convergence}.
\end{proof}

\subsection{Strong convergence for well-prepared initial data}

\begin{Proposition}\label{P.convergence-WP}
Using the notations of Proposition~\ref{P.weak-convergence}, let $\U^\init_\r\to\U^\init \in H^{s}(\RR^d)^{N(1+d)}$ as $\r\to0$, with $\U_\r^\init$ well-prepared as in Proposition~\ref{P.well-prepared initial data}. Then (up to extracting a subsequence) $\U_\r \to\U^\RL $ strongly in $C^0(\into{T};H^{s'}_{\rm loc}(\RR^d))^{N(1+d)-1}$, for all $s'<s$, as $\r\to0$. 

Moreover, $\partial_t\u_{n,\r}\rightharpoonup \partial_t\u_n\in L^\infty(0,T;L^2(\RR^d))$ and $\frac1\r \nabla(\r^{-1}\zeta_{1,\r})\rightharpoonup \nabla p^\RL\in L^\infty(0,T;L^2(\RR^d))$, where $\nabla p^\RL$ is the pressure associated with $\U^\RL $ solution to~\eqref{RL}.
\end{Proposition}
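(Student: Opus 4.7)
The plan is to upgrade the weak convergence of Proposition~\ref{P.weak-convergence} to strong convergence by exploiting the additional time regularity conferred by the well-preparedness hypothesis. Proposition~\ref{P.well-prepared initial data} provides, uniformly in $\r\in(0,\r_0)$,
\begin{equation*}
\Norm{\U_\r}_{L^\infty(0,T;H^s)}+\frac1\r\Norm{\nabla(\r^{-1}\zeta_{1,\r})}_{L^\infty(0,T;L^2)}+\frac1\r\Norm{\nabla\cdot\w_\r}_{L^\infty(0,T;L^2)}\leq C.
\end{equation*}
Plugging these back into the free-surface system~\eqref{FS-U-mr} and noting that the only singular contribution in the $n$-th momentum equation is $\frac{1-\r^2}{\gamma_n}\frac1\r\nabla(\r^{-1}\zeta_{1,\r})$, while the only singular contribution in the $\zeta_1$-equation is $\frac1\r\nabla\cdot\w_\r$, one deduces that $\partial_t\U_\r$ is bounded in $L^\infty(0,T;L^2(\RR^d))^{N(1+d)}$ uniformly in $\r$.

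Armed with these bounds, I would apply a standard Aubin--Lions/Arzelà--Ascoli argument: on any compact $K\subset\RR^d$ the embedding $H^s(K)\hookrightarrow H^{s'}(K)$ is compact for $s'<s$, while the uniform Lipschitz bound $\Norm{\U_\r(t)-\U_\r(t')}_{L^2}\leq C|t-t'|$ furnishes time-equicontinuity. A diagonal extraction along an exhaustion of $\RR^d$ by compacts yields a subsequence converging strongly in $C^0(\into{T};H^{s'}_{\rm loc}(\RR^d))$; extracting a further subsequence so that Proposition~\ref{P.weak-convergence} applies simultaneously, the strong limit is forced to coincide with the distributional limit $\U^\RL$. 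Moreover, since $\Pi\w_\r=\nabla\Delta^{-1}\nabla\cdot\w_\r$ satisfies $\Norm{\Pi\w_\r}_{L^\infty(0,T;H^{-1})}=O(\r)$ while remaining bounded in $L^\infty(0,T;H^s)$, Sobolev interpolation yields $\Pi\w_\r\to\z$ strongly in $L^\infty(0,T;H^{s'}_{\rm loc})$; in particular $\delta^{-1}\Pi\w^\init\to\z$, so the shift in the initial data of $\U^\RL$ appearing in Proposition~\ref{P.weak-convergence} vanishes asymptotically and $\u_{n,\r}\to\u_n^\RL$ directly (not merely $\u_{n,\r}-\delta^{-1}\Pi\w_\r$).

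Next, to identify the pressure I would apply Banach--Alaoglu to the uniform $L^\infty(0,T;L^2)$ bounds on $\partial_t \U_\r$ and on $\frac1\r\nabla(\r^{-1}\zeta_{1,\r})$, extracting a further subsequence along which $\partial_t\u_{n,\r}\rightharpoonup\partial_t\u_n$ and $\frac1\r\nabla(\r^{-1}\zeta_{1,\r})\rightharpoonup G$ weakly-$\star$ in $L^\infty(0,T;L^2(\RR^d))$. The strong $H^{s'}_{\rm loc}$ convergence of $\U_\r$ permits passing to the limit in the nonlinear term $(\u_{n,\r}\cdot\nabla)\u_{n,\r}$, and, combined with $\frac{1-\r^2}{\gamma_n}\to 1$, passing to the limit in the $n$-th momentum equation of~\eqref{FS-U-mr} yields, for every $n\in\{1,\dots,N\}$,
\begin{equation*}
\partial_t\u_n + G + \sum_{i=2}^n r_i \nabla\zeta_i + (\u_n\cdot\nabla)\u_n = \z.
\end{equation*}
Comparison with the rigid-lid equations~\eqref{RL} satisfied by $\U^\RL$ forces $G=\nabla p^\RL$, as claimed.

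The principal obstacle is that the singular term $\frac1\r\nabla(\r^{-1}\zeta_{1,\r})$ has no a~priori pointwise limit and can only be identified with $\nabla p^\RL$ \emph{after} the nonsingular components have been shown to converge strongly enough to make sense of the quadratic terms in the limit. This is why the compactness/Aubin--Lions step must be performed before attempting to identify the pressure; the remainder reduces to careful bookkeeping on the initial data, accounting for the fact that well-preparedness precisely ensures $\Pi\w^\init\to\z$.
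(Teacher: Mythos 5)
Your argument follows essentially the same route as the paper: the well-preparedness bounds of Proposition~\ref{P.well-prepared initial data}, fed back into~\eqref{FS-U-mr}, give a uniform $L^\infty(0,T;L^2)$ bound on $\partial_t\U_\r$; Aubin--Lions then upgrades the weak convergence of Proposition~\ref{P.weak-convergence} to strong convergence in $C^0(\into{T};H^{s'}_{\rm loc})$; and Banach--Alaoglu plus passage to the limit in the momentum equations identifies the weak limit of $\frac1\r\nabla(\r^{-1}\zeta_{1,\r})$ as $\nabla p^\RL$. This is exactly the paper's proof.

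One side remark: the estimate $\Norm{\Pi\w_\r}_{L^\infty(0,T;H^{-1})}=\O(\r)$ is not correct as stated, since $\nabla\Delta^{-1}$ (symbol $i\xi/|\xi|^2$) is not bounded from $L^2$ to $H^{-1}$ at low frequencies; a divergence-free-to-leading-order field concentrated at frequency $|\xi|\sim\epsilon$ defeats it. This step is, however, dispensable: $\Pi\w_\r$ is bounded in $L^\infty(0,T;H^s)$ with $\partial_t(\Pi\w_\r)=\Pi\partial_t\w_\r$ bounded in $L^\infty(0,T;L^2)$, so the same compactness argument applies to it directly, and its strong local limit must coincide with its weak limit, which Proposition~\ref{P.weak-convergence} already identifies as $\z$ (this is how the paper proceeds). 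Likewise the vanishing of the initial-data shift follows simply from $\nabla\cdot\w^\init=0$ for the limiting datum, which forces $\Pi\w^\init=\z$.
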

\begin{proof}
We follow the proof of Proposition~\ref{P.weak-convergence}. However, we may use additionally that, thanks to Proposition~\ref{P.well-prepared initial data} (and using the system of equations~\eqref{FS-U-mr} to control time derivatives)
\[ \Norm{ \partial_t (\r^{-1}\zeta_{1,\r}) }_{L^2}+ \sum_{n-1}^N\Norm{ \partial_t \u_{n,\r} }_{L^2} \leq C(\m,h_0^{-1},M_0) M_0.\]
It follows (up to the extraction of a subsequence)  $\u_{n,\r}  \to\u_n$ and $\Pi \w_\r\to 0$ strongly in $C^0(\into{T};L^2_{\rm loc})$, and therefore in $C^0(\into{T};H^{s'}_{\rm loc})$ for $s'<s$; and $\partial_t\u_{n,\r} \rightharpoonup \partial_t \u_n\in L^\infty(0,T;L^2(\RR^d))$ (in the sense of distributions).

By Banach-Alaoglu theorem, $\frac1\r \nabla(\r^{-1}\zeta_{1,\r}) $ has a weak limit in $ L^\infty(0,T;L^2(\RR^d))$ when $\r\to 0$, and passing to the weak limit in the velocity evolution equations shows that this limit is $\nabla p^\RL$.
\end{proof}

\subsection{Strong convergence for ill-prepared initial data; proof of Theorem~\ref{T.convergence}}\label{S.convergence-strong}

The proof of Theorem~\ref{T.convergence} is divided in three parts. We first construct a ``slow mode'' approximate solution, thanks to the appropriate rigid-lid solution. We then construct a ``fast mode'' approximate solution, satisfying an acoustic wave equation with appropriate initial data. Finally, we show that, thanks to dispersive estimates on the fast mode, the coupling effects between the two modes are small, so that the superposition of the two components solves approximately the free-surface system~\eqref{FS-U-mr} with appropriate initial data. The energy estimate of Proposition~\ref{P.energy-estimate-L2}, applied to the difference between the exact and the approximate solution, allows to conclude.
\medskip

\noindent{\em Construction of the slow mode.} Using Proposition~\ref{P.convergence-WP} with well-prepared initial data
\[\U^\init_\r\eqdef \left(0,\zeta_2^\init,\dots,\zeta_n^\init,\u_1^\init-\delta^{-1}\Pi\w^\init,\dots,\u_N^\init-\delta^{-1}\Pi\w^\init\right)^\top\]
 and artificially setting $\r\to0$, one obtains in the limit  
\[\U^\RL\eqdef (\zeta_2^\RL,\dots,\zeta_N^\RL, \u_1^\RL,\dots,\u_N^\RL)\in L^\infty(0,T;H^{s}(\RR^d))^{N(1+d)-1}\]
 a solution to~\eqref{RL} with initial data
\[ \U^\RL\id{t=0}=\left(\zeta_2^\init,\dots,\zeta_N^\init,\u_1^\init-\delta^{-1}\Pi\w^\init,\dots,\u_N^\init-\delta^{-1}\Pi\w^\init\right)^\top,\]
where we recall that $\delta\eqdef\sum_{n=1}^N \delta_n$ is the total depth, $\w^\init\eqdef\sum_{n=1}^N(\delta_n+\zeta_n^\init-\zeta_{n+1}^\init)\u_n^\init$ with convention $\zeta_1^\init=\zeta_{N+1}^\init=0$ and $\Pi\eqdef\nabla\Delta^{-1}\nabla\cdot$ the orthogonal projection onto irrotational vector fields. Moreover, the corresponding pressure satisfies $\nabla p^\RL\in L^\infty(0,T;L^2(\RR^d))$, and one has 
\begin{equation}\label{est-URL} 
\norm{\U^\RL}_{L^\infty(0,T;H^{s}(\RR^d))}\leq C(\m,h_0^{-1},\norm{\U^\init}_{H^s})\norm{\U^\init}_{H^s}
\end{equation}
(since $\U_\r$, the solutions of~\eqref{FS-U-mr} with $\U_\r\id{t=0}=\U^\init$ from which $\U^\RL$ is constructed, satisfy the same estimate by Theorem~\ref{T.WP}).

Let us prove that one may (uniquely) choose $p^\RL\in L^\infty(0,T;H^s(\RR^d))$. The regularity of $\U^\RL$ strong solution to~\eqref{RL} is sufficient to claim that $\nabla p^\RL\in L^\infty(0,T;L^2(\RR^d))$ is uniquely defined by
\[ \left( \sum_{n=1}^N \delta_n\right) \Delta p^\RL \ + \ \sum_{n=1}^N \nabla\cdot \left(   h_n^\RL(\u_n^\RL\cdot\nabla)\u_n^\RL+\u_n^\RL \nabla\cdot(h_n^\RL\u_n^\RL) + h_n^\RL \sum_{i=2}^n r_i\nabla\zeta_i^\RL \right) \ = \ 0,\]
 where $h_n^\RL=\delta_n+\zeta_n^\RL-\zeta_{n+1}^\RL$ with convention $\zeta_1^\RL=\zeta_{n+1}^\RL\equiv 0$. Notice now that\[  h_n^\RL(\u_n^\RL\cdot\nabla)\u_n^\RL+\u_n^\RL \nabla\cdot(h_n^\RL\u_n^\RL) =\nabla\cdot (h_n^\RL \u_n^\RL \otimes \u_n^\RL)\]
and
 \[ \sum_{n=1}^N h_n^\RL \sum_{i=2}^n r_i\nabla\zeta_i^\RL = \sum_{i=2}^N r_i\nabla\zeta_i^\RL  \sum_{n=i}^N h_n^\RL = \sum_{i=2}^N r_i\nabla\zeta_i^\RL  \left(\zeta_i^\RL+\sum_{n=i}^N \delta_n\right) .\]
 It follows, using that $H^{s}(\RR^d)$ is an algebra, that there exists $\varphi^{x,x},\varphi^{x,y},\varphi^{y,y}\in L^\infty(0,T;H^{s}(\RR^d))$ such that
 \[ \Delta p^\RL = \partial_x^2 \varphi^{x,x}+\partial_x\partial_y  \varphi^{x,y}+\partial_y^2 \varphi^{y,y}.\]
 Thus one may define a unique solution $p^\RL\in L^\infty(0,T;H^{s}(\RR^d))$ by Fourier analysis.
 \footnote{Let us remark incenditally that such a choice of pressure with bounded energy is allowed thanks to the Boussinesq approximation applied to the rigid-lid system. Without the Boussinesq approximation, generic initial conditions will generate horizontal pressure imbalances, which in turn yield an apparently paradoxical evolution in time of the total horizontal momentum; see~\cite{CamassaChenFalquiEtAl13}. One can check that the total horizontal momentum is preserved for the free-surface system as well as for the rigid-lid system with Boussinesq approximation.
  }
 From the product estimates in Sobolev spaces (see Section~\ref{S.product}) and estimate~\eqref{est-URL}, one has
 \[ \norm{p^\RL}_{L^\infty(0,T;H^s(\RR^d))}\leq C(\m,h_0^{-1},\norm{\U^\init}_{H^s})\norm{\U^\init}_{H^s}.\]
 
We denote $\U^{\rm slow}\eqdef(\r p^\RL,\zeta_2^\RL,\dots,\zeta_N^\RL, \u_1^\RL,\dots,\u_N^\RL)\in L^\infty(0,T;H^{s}(\RR^d))^{N(1+d)}$. If $\r$ is chosen sufficiently small, then $\U^{\rm slow}$ satisfies~\eqref{C.depth-mr} and therefore the change of variables~\eqref{UtoV} defines $\V^{\rm slow}$ satisfying (see Lemma~\ref{L.UtoV})
\begin{equation}\label{est-Vslow}  \Norm{\V^{\rm slow}}_{L^\infty(0,T;H^s(\RR^d))}
\leq C(\m,h_0^{-1},\norm{\U^\init}_{H^s})\norm{\U^\init}_{H^s}.
\end{equation}

It is easy to check, using $\gamma_i=1-\r^2 \sum_{j=i+1}^N r_j$ and since $\nabla\cdot \big(\sum_{n=1}^N h_n^\RL \u_n^\RL\big)=0$, that $\V^{\rm slow}$ satisfies~\eqref{FS-V-mr} up to a remainder term denoted $\R^{\rm slow}\in L^\infty(0,T;H^{s-1}(\RR^d))$; and
\begin{equation}\label{est-Rslow} 
 \norm{\R^{\rm slow}}_{L^\infty(0,T;H^{s-1}(\RR^d))} \leq \r\  C(\m,h_0^{-1},\norm{\U^\init}_{H^s})\norm{\U^\init}_{H^s}.
 \end{equation}
\medskip

\noindent{\em Construction of the fast mode.} We constructed above an approximate solution of~\eqref{FS-U-mr} (in the sense of consistency), but which does not fulfil the required initial condition. We correct this defect through an explicit ``fast mode''. Define $\V^{\rm fast}\eqdef (\r^{-1}\zeta_1^{\rm ac},0,\dots,0,w^{x,\rm ac},0,\dots,0,w^{y,\rm ac})^\top$ where $\r^{-1}\zeta_1^{\rm ac}$ and $\w^{\rm ac}\eqdef (w^{x,\rm ac},w^{y,\rm ac})^\top$ solve
\begin{equation}\label{acoustic}
\left\{ \begin{array}{l}
\displaystyle\partial_{ t}( \r^{-1}\zeta_1^{\rm ac}) \ + \ \frac1\r \nabla\cdot \w^{\rm ac} \ =\ 0,  \\ 
\\
\dsp\partial_t \w^{\rm ac}+ \frac{\delta}{\r}\nabla(\r^{-1}\zeta_1^{\rm ac})\ =\ \z,
\end{array}
\right. 
\end{equation}
with initial condition $\zeta_1^{\rm ac}\id{t=0}=\zeta_1^\init$ and $\w^{\rm ac}\id{t=0}=\Pi \w^\init$, so that
  \begin{equation}\label{est-initial data} 
\norm{\V^{\rm fast}\id{t=0}+\V^{\rm slow}\id{t=0}-\V^\init}_{L^2} \leq \r^2\ C(\m,h_0^{-1},\norm{\U^\init}_{H^s})\norm{\U^\init}_{H^s} ,
\end{equation}
where $\V^\init$ is defined from $\U^\init$ after the change of variables~\eqref{UtoV}.

The above is an acoustic wave equation and is well understood. The following results may be found in~\cite{BahouriCheminDanchin11} for instance. There exists a unique solution $(\r^{-1}\zeta_1^{\rm ac},\w^{\rm ac})\in C^0(\RR;H^{s}(\RR^d))^{1+d}$. It satisfies $\Pi \w^{\rm ac}=\w^{\rm ac}$ for any $t\in\RR$, and
\begin{align}\label{est-Vfast} 
\forall t\in\RR, \quad \left(\norm{\r^{-1}\zeta_1^{\rm ac}}_{H^s}^2+\frac1{\delta}\norm{\w^{\rm ac}}_{H^s}^2\right)^{1/2} &= \left(\norm{\zeta_1^\init}_{H^s}^2+ \frac1\delta\norm{\w^\init}_{H^s}^2\right)^{1/2}\\
&\leq C(\m,h_0^{-1},\norm{\U^\init}_{H^s})\norm{\U^\init}_{H^s}.\nonumber
 \end{align}
What is more, since $d=2$, one has the Strichartz estimates
\[ \Norm{\r^{-1}\zeta_1^{\rm ac}}_{L^p_t(\RR;L^q(\RR^d))}+\Norm{\w^{\rm ac}}_{L^p_t(\RR;L^q(\RR^d))} \leq C \r^{1/p}\left( \norm{\zeta_1^\init}_{H^{\sigma}}+ C \norm{\w^\init}_{H^{\sigma}}\right)\]
where $p,q$ are admissible, namely $2<p,q<\infty$ and $\frac1p+\frac{d}{q}=\frac{d}2-\sigma$ and $\frac2p+\frac{d-1}{q}=\frac{d-1}2$. Set for instance $p=q=6$ and $\sigma=\frac12$. By a scaling argument and differentiating once the system, one has
  \begin{equation}\label{est-Strichartz} 
  \Norm{\V^{\rm fast}}_{L^6(0,T;W^{1,6}(\RR^d))} \leq  \r^{1/6} \ C(\m,h_0^{-1},\norm{\U^\init}_{H^s})\norm{\U^\init}_{H^s} .
  \end{equation}
It follows that we control quadratic nonlinearities as
  \begin{align*} \Norm{\V^{\rm fast}\otimes\V^{\rm fast} }_{L^1(0,T;H^1(\RR^d))}  &\leq \Norm{\V^{\rm fast} }_{L^{6}(0,T;W^{1,6}(\RR^d))} \times  \Norm{\V^{\rm fast}}_{L^{6/5}(0,T;W^{1,3}(\RR^d))} \\
  &\leq  \r^{1/6} \ C(\m,h_0^{-1},\norm{\U^\init}_{H^s}),
 \end{align*}
where we used Hölder inequality, then Sobolev embedding and~\eqref{est-Vfast}-\eqref{est-Strichartz}, with the restriction on the time interval,
\[T^{-1}\leq C(\m,h_0^{-1},\norm{\U^\init}_{H^s})\norm{\U^\init}_{H^s}.\]
 
We deduce that $\V^{\rm fast}$ satisfies the equations~\eqref{FS-V-mr} up to a remainder $\R$ such that
\begin{equation}\label{est-Rfast}  
\norm{\R^{\rm fast}}_{L^1(0,T;L^2(\RR^d))} \to 0 \quad (\r\to 0).
\end{equation}
\medskip

\noindent {\em Control of the coupling terms.} Let us denote $\V^{\rm app}\eqdef \V^{\rm slow}+\V^{\rm fast}$. 
Let us first check that $\V^{\rm app}$ is an approximate solution to~\eqref{FS-V-mr}, in the sense of consistency. From the above, we have that
\[ \partial_t \V^{\rm app} + \frac1\r\B^x[\V^{\rm app}  ] \partial_x\V^{\rm app} + \frac1\r\B^y[\V^{\rm app}  ] \partial_y\V^{\rm app} =\R^{\rm slow}+\R^{\rm fast}+\R^{\rm coupl}\]
where $\R^{\rm slow}$ and $\R^{\rm fast}$ have been defined and estimated previously; and
\begin{multline*}
 \R^{\rm coupl}\eqdef \frac1\r \big(\B^x[\V^{\rm app}  ] - \B^x[\V^{\rm fast} ] \big)\partial_x\V^{\rm fast} + \frac1\r \big(\B^y[\V^{\rm app}  ] - \B^y[\V^{\rm fast} ] \big)\partial_y\V^{\rm fast} \\
 +\frac1\r \big(\B^x[\V^{\rm app}  ] - \B^x[\V^{\rm slow} ] \big)\partial_x\V^{\rm slow} + \frac1\r \big(\B^y[\V^{\rm app}  ] - \B^y[\V^{\rm slow} ] \big)\partial_y\V^{\rm slow} .
 \end{multline*}
By estimate~\eqref{C-estimates-3} in Corollary~\ref{C.prop}, one has
\[  \norm{\R^{\rm coupl}}_{L^2}\leq C(\m,h_0^{-1},\norm{\V^{\rm fast}}_{L^{\infty}},\norm{\V^{\rm slow}}_{L^{\infty}}) \norm{\V^{\rm fast}\otimes \V^{\rm slow}}_{H^1}.\]
We deduce from~\eqref{est-Vslow},~\eqref{est-Vfast} and~\eqref{est-Strichartz}, proceeding as for~\eqref{est-Rfast},
\begin{equation}\label{est-Rcoupl} 
 \norm{\R^{\rm coupl}}_{L^1(0,T;L^2(\RR^d))}\to 0 \qquad (\r\to 0).
 \end{equation} 

Let us now denote $\U\in C^0(\into{T};H^{s}(\RR^d))^{N(1+d)}$ the strong solution to~\eqref{FS-U-mr} with initial data $\U\id{t=0}=\U^\init$ as defined by Theorem~\ref{T.WP}; and $\V$ the corresponding solution to~\eqref{FS-V-mr} defined by the change of variables~\eqref{UtoV}. By Theorem~\ref{T.WP} and Lemma~\ref{L.UtoV}, one has
\begin{equation}\label{est-V} 
 \norm{\V}_{L^\infty(0,T;H^{s}(\RR^d))} \leq C(\m,h_0^{-1},\norm{\U^\init}_{H^{s}})\norm{\U^\init}_{H^{s}}.
 \end{equation}
Proceeding as above, we find that the difference between the exact and the approximate solution, $\W\eqdef \V^{\rm app}-\V$, satisfies
\[ \partial_t\W + \frac1\r\B^x[\V ] \partial_x\W+ \frac1\r\B^y[\V ] \partial_y \W=\R^{\rm slow}+\R^{\rm fast}+\R^{\rm coupl}+\R,\]
with
\[
 \R\eqdef \frac1\r \big(\B^x[\V^{\rm app}  ] - \B^x[\V ] \big)\partial_x\V^{\rm app} + \frac1\r \big(\B^y[\V^{\rm app}  ] - \B^y[\V] \big)\partial_y\V^{\rm app} .\]
Again, by estimate~\eqref{C-estimates-3} in Corollary~\ref{C.prop}, one has
\begin{equation}\label{est-R}   \norm{\R}_{L^2}\leq C(\m,h_0^{-1},\norm{\V^{\rm app}}_{W^{1,\infty}},\norm{\V}_{L^{\infty}}) \norm{\V^{\rm app}-\V}_{L^2}.
\end{equation}
\medskip

We now apply Proposition~\ref{P.energy-estimate-L2} with $\W=\V^{\rm app}-\V$, and deduce
\[ \norm{\V^{\rm app}-\V}_{L^2}(t)\leq \norm{\V^{\rm app}-\V}_{L^2}(0) e^{C_1\bNorm{\V}t} + \int_0^t e^{C_1\bNorm{\V}(t-t')} C_0(t')\norm{\R^{\rm slow}+\R^{\rm fast}+\R^{\rm coupl}+\R}_{L^2}(t') \ \dd t',\]
with $C_0(t)=C(\m,h_0^{-1},\norm{\V}_{W^{1,\infty}}(t))$, $C_1=C(\m,h_0^{-1},\Norm{\V}_{L^\infty(0,T\times\RR^d)})$.

Using the above control on the initial data~\eqref{est-initial data} and remainder terms~\eqref{est-Rslow},\eqref{est-Rfast},\eqref{est-Rcoupl},\eqref{est-R} and Gronwall's Lemma, we obtain
\[ \sup_{t\in \into{T}} \norm{\V^{\rm app}-\V}_{L^2}(t)\to 0 \qquad (\r\to 0).\]
By the logarithmic convexity of Sobolev norms and estimates~\eqref{est-Vslow},\eqref{est-Vfast},\eqref{est-V}, it follows
\[ \sup_{t\in \into{T}} \norm{\V^{\rm app}-\V}_{H^{s'}}(t)\to 0 \qquad (\r\to 0)\]
for any $s'<s$. 
\medskip

This concludes the proof of Theorem~\ref{T.convergence}, with the exception of the uniqueness of the strong solution of the rigid-lid system~\eqref{RL}. However, given two solutions 
\[\U_j^\RL\eqdef(p^\RL_j,\zeta_{2,j}^\RL,\dots,\zeta_{N,j}^\RL, \u_{1,j}^\RL,\dots,\u_{N,j}^\RL)^\top\in L^\infty(0,T;H^{s}(\RR^d))^{N(1+d)}\qquad  (j=1,2)\]
 with same initial data, we may construct as above (\ie multiplying the pressure with a $\r$ prefactor) $\U_{1,\r}^{\rm slow},\U_{2,\r}^{\rm slow}\in L^\infty(0,T;H^{s}(\RR^d))^{N(1+d)}\cap W^{1,\infty}_{t,\x}((0,T)\times\RR^d)$, two families of approximate solutions of the free-surface system~\eqref{FS-V-compact}, for arbitrarily small $\r$. Applying Proposition~\ref{P.energy-estimate-L2} with the equation satisfied by the difference between the two solutions (after the change of variable~\eqref{UtoV}), using~\eqref{est-Vslow},~\eqref{est-Rslow} and taking the limit $\r\to0$, yields $\U^\RL_1=\U^\RL_2$.

\appendix 

\section{Notations, functional setting and technical tools}\label{S.notations}

\subsection{Notations}

We denote by $C(\lambda_1, \lambda_2,\dots)$ a non-negative constant depending on the parameters
 $\lambda_1$, $\lambda_2$,\dots and whose dependence on $\lambda_j$ is always assumed to be non-decreasing. It may also depend without acknowledgment on the horizontal dimension, $d$; number of layers, $N$; Sobolev index at stake, $s$. 
 
   Given $X$ a topological vector space, $X'$ denotes its continuous dual, endowed with the strong topology. 
 
 Given $H$ a Hilbert space and $\mT:H\to H$ a continuous linear operator, we denote $\mT^\star$ its adjoint.
 \medskip
 
For $1\leq p<\infty$, we denote $L^p=L^p(\RR^d)$ the standard Lebesgue spaces associated with the norm
 \[\vert f \vert_{L^p}\eqdef \left(\int_{\RR^d} \abs{ f(\x)}^p\  \dd \x\right)^{\frac1p}<\infty.\] 
 
  The space $L^\infty=L^\infty(\RR^d)$ consists of all essentially bounded, Lebesgue-measurable functions
   $f$ endowed with the norm
  \[
  \norm{f}_{L^\infty}\eqdef {\rm ess\,sup}_{\x\in\RR^d} | f(\x)|<\infty.
  \]

For $k\in\NN$, we denote by $W^{k,\infty}=W^{k,\infty}(\RR^d)=\{f \mbox{ s.t. } \forall 0\leq|\alpha|\leq k,\ \partial^\alpha f \in L^{\infty}(\RR^d)\}$ (endowed with its canonical norm) where we use the standard multi-index notation for $\alpha$-differentiation.

We denote by $C^k=C^k(\RR^d)$ the space of continuous functions on $\RR^d$ with continuous derivatives up to the order $k$, endowed with the same norm.

The real inner product of any functions $f_1$ and $f_2$ in the Hilbert space $L^2(\RR^d)$ is denoted by
 \[
  \big(f_1\ ,\ f_2\big)_{L^2}\eqdef\int_{\RR^d}f_1(\x)\overline{f_2(\x)}\ \dd \x.
  \]
  
  For any real constant $s\in\RR$, $H^s=H^s(\RR^d)$ denotes the Sobolev space of all tempered distributions, $f\in\mathcal S'(\RR^d)$, such that $\vert f\vert_{H^s}=\norm{ \Lambda^s f}_{L^2} < \infty$, where $\Lambda$ is the Fourier multiplier \[\Lambda\eqdef (\Id-\Delta)^{1/2}= (\Id+|D|^2)^{1/2}.\]
   \medskip

For any function $u=u(t,\x)$ defined on $ [0,T)\times \RR^d$ with $T>0$, and any of the previously defined functional spaces, $X$, we denote $L^\infty(0,T;X)$ the space of functions such that $u(t,\cdot)$ is controlled in $X$, uniformly for $t\in[0,T)$, and use double bar symbol for the associated norm:
  \[\Norm{u}_{L^\infty(0,T;X)} \ = \ {\rm ess\,sup}_{t\in[0,T)}\norm{u(t,\cdot)}_{X} \ < \ \infty.\]
  
 For $k\in\NN$, $C^k([0,T);X)$ denotes the space of $X$-valued continuous functions on $[0,T)$ with continuous derivatives up to the order $k$. 
Finally, we denote, in order to avoid confusions,
\[  W^{1,\infty}_{t,\x}=W^{1,\infty}_{t,\x}((0,T)\times\RR^d)= \{f, \mbox{ such that } f,\ \nabla f , \ \partial_t f \in L^{\infty}((0,T)\times\RR^d)\}.\]
\medskip

We denote by $\big(\cdot,\cdot\big)$ and $\norm{\cdot}$ the Euclidean inner product and norm on vector space $\RR^N$ or $\CC^N$ and $\Norm{\cdot}$ the corresponding induced norm on $\M_N(\RR)$, the space of $N$-by-$N$ matrices with real entries (the choice of the norms has little significance).

If the entries belong to a Banach algebra $X$ (\eg $X=W^{k,\infty}(\RR^d)$ or $X=H^s(\RR^d)$ with $s>d/2$), then we denote $\big(\cdot,\cdot\big)_X$, $\norm{\cdot}_X$ and $\Norm{\cdot}_X$ the corresponding inner product, vector and matrix norms.

\subsection{Product and commutator estimates}\label{S.product}
We quickly recall the standard product, Schauder and Kato-Ponce estimates in $ H^\sigma(\RR^d)$, $\sigma>d/2$. Proofs or references concerning the following results may be found for instance in~\cite{Lannes}. 

Consider scalar functions $f,g\in H^\sigma(\RR^d)$. By Sobolev embedding, one has $f,g\in C^0(\RR^d)$ and
\[ \norm{f}_{L^\infty}\leq C \norm{f}_{H^\sigma}.\]
It follows that the product is well-defined and continuous. Moreover, one has $fg\in H^\sigma(\RR^d)$ and
\[ \norm{f g}_{H^\sigma} \leq C \norm{f}_{H^\sigma}\norm{g}_{H^\sigma}.\]
Moreover, if $F\in C^k(\RR)$ with $k\in\NN$, $k\geq \sigma$ and satisfies $F(0)=0$, then $F(f)\in H^\sigma(\RR^d)$ and
\[ \norm{F(f)}_{H^\sigma} \leq C(\norm{f}_{H^\sigma},\norm{F}_{C^k})\norm{f}_{H^\sigma}.\]
A consequence of the above is of particular importance in our setting. Let $g\in H^\sigma(\RR^d)$ be such that $1+g\geq h_0>0$. Then for any $f\in H^\sigma(\RR^d)$, $\frac{f}{1+g}\in H^\sigma(\RR^d)$ and
\[\norm{\frac{f}{1+g}}_{H^\sigma} \leq C(h_0^{-1},\norm{g}_{H^\sigma})\norm{f}_{H^\sigma}\]
(it suffices to remark $\frac{f}{1+g}=f-f \frac{g}{1+g}$, and apply the Schauder estimate with $F$ a smooth function such that $F(X)=\frac{X}{1+X}$ if $|X|\leq 1-h_0$ and $F(X)=0$ if $|X|\geq 1-h_0/2$). Finally, we have the celebrated Kato-Ponce estimate for commutators:
\[ \norm{ \big[\Lambda^\sigma, f\big] \partial_x g}_{L^2} \leq C \norm{f}_{H^{\sigma}}\norm{\partial_x g}_{H^{\sigma-1}}\leq C \norm{f}_{H^{\sigma}}\norm{g}_{H^\sigma}.\]

\subsection{Paradifferential calculus}\label{S.para}

In this section, we recall some results concerning Bony's paradifferential calculus. We follow the definition and most of the notations of~\cite{M'etivier08}, although the latter is restricted to scalar functions and operators. The generalizations to (finite dimensional) vector spaces brings no additional difficulty since each operation may be reduced to a linear combination of entrywise scalar operations.

\begin{Definition}[Symbols]\label{D.symbols}
Let $k\in\NN$ and $m\in\NN$. We denote $\Gamma^m_k$ the space of locally bounded functions $A(\x,\xi):\RR^d\times \RR^d\to \M_{N(1+d)}(\CC)$ which are $C^\infty$ with respect to $\xi$ and such that for any $\alpha\in\NN^d$, $\x\mapsto \partial_\xi^\alpha A(\x,\xi)$ belongs to $W^{k,\infty}(\RR^d)$ and there exists a constant $C_\alpha$ such that
\[ \forall\xi\in \RR^d, \qquad \Norm{\partial_\xi^\alpha A(\cdot,\xi)}_{W^{k,\infty}}\leq C_\alpha (1+|\xi|)^{m-|\alpha|},\]
where we use the standard multi-index notation for $\alpha$-differentiation. For $A\in \Gamma^m_k$, we denote
\[ M^m_k(A;n)\eqdef \sup_{|\alpha|\leq n}\sup_{\xi\in\RR^d} \Norm{ (1+|\xi|)^{|\alpha|-m} \partial_\xi^\alpha A(\x,\xi) }_{W^{k,\infty}}.\]
\end{Definition}

Given a symbol $A\in \Gamma^m_k$, one can associate a suitable paradifferential operator.

\begin{Definition}[Paradifferential operators]\label{D.paradiff}
Let $A\in \Gamma^m_k$. Then we define the paradifferential operator $\mT_A$, with symbol $A$, by
\begin{equation}\label{def-para}
\forall U\in L^2(\RR^d)^{N(1+d)}, \qquad \mT_AU(\x)\eqdef (2\pi)^{-d}\int_{\RR^d} e^{i\x\cdot\xi }\Sigma_A^\psi(\x,\xi)\widehat{U}(\xi)\ \dd\xi,
\end{equation}
where $\Sigma_A^\psi$ is defined with $\mF_\x \Sigma_A^\psi(\eta,\xi)=\psi(\eta,\xi) \mF_\x A(\eta,\xi)$ where $\mF_\x A(\eta,\xi)$ is the (component-by-component) Fourier transform of $A(\x,\xi)$ with respect to $\x$, and $\psi$ is an {\em admissible cut-off} in the sense of \cite[Definition 5.1.4]{M'etivier08}, and whose expression does not need to be precised.
\end{Definition}

We now give some results used in this work.
\begin{Proposition}[\cite{M'etivier08}, (5.25) and Theorems 6.1.4,~6.2.4] \label{P.paradiff-estimates}
Let $m\in \NN$ and assume $d\leq 2$.
\begin{enumerate}
\item If $A\in \Gamma^m_0$, then for any $s\in\RR$, $\mT_A:H^s\to H^{s-m}$ is bounded, and
\[ \Norm{\mT_A}_{H^s\to H^{s-m}} \leq C(s) M_0^m(A;2).\]
\item If $A\in \Gamma^m_1$ and $B\in \Gamma^{m'}_1$, then $\mT_A\mT_B-\mT_{AB}:H^s\to H^{s-(m+m'-1)}$ is bounded, and
\[ \Norm{\mT_A\mT_B-\mT_{AB}}_{H^s\to H^{s-(m+m'-1)}} \leq C(s) M_1^m(A;3)M_1^m(B;3).\]
\item If $A\in \Gamma^m_1$, then $(\mT_A)^\star- \mT_{\overline{A}}:H^s\to H^{s-(m-1)}$ is bounded, and
\[ \Norm{(\mT_A)^\star- \mT_{\overline{A}}}_{H^s\to H^{s-(m-1)}} \leq C(s) M_1^m(A;3).\]
\end{enumerate}

\end{Proposition}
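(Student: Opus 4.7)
The plan is to prove the three items by the standard Littlewood--Paley and paraproduct machinery underlying Bony's paradifferential calculus. The point of departure is the structural property of the admissible cut-off $\psi$ used in Definition~\ref{D.paradiff}: by construction, $\psi(\eta,\xi)$ is supported in $\{|\eta|\leq \varepsilon_1(1+|\xi|)\}$ and equals one on $\{|\eta|\leq \varepsilon_2(1+|\xi|)\}$ with $0<\varepsilon_2<\varepsilon_1<1$. Consequently, for every fixed $\xi$, the function $\x\mapsto \Sigma_A^\psi(\x,\xi)$ is spectrally supported (in $\x$) in a ball of radius $\lesssim(1+|\xi|)$. This is the key property that makes $\mT_A$ behave like a paraproduct rather than a full pseudo-differential operator, and I would extract it first.

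For item (i), I would apply a dyadic decomposition $U=\sum_{j\geq -1}\Delta_j U$ with $\Delta_j$ the usual Littlewood--Paley projectors. Because of the $\x$-spectral localization of $\Sigma_A^\psi$ recalled above, each piece $\mT_A\Delta_j U$ is spectrally supported in an annulus of size $\sim 2^j$. The symbol estimate on $A$, together with a standard kernel bound (pointwise $L^\infty$ control obtained by integrating two $\xi$-derivatives, which accounts for the seminorm $M_0^m(A;2)$), yields $\Norm{\mT_A\Delta_j U}_{L^2}\leq C\, 2^{jm}\,M_0^m(A;2)\,\Norm{\Delta_j U}_{L^2}$. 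Summing in $\ell^2$ against the weights $2^{j(s-m)}$ and using almost-orthogonality produces the $H^s\to H^{s-m}$ bound.

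For items (ii) and (iii), the strategy is a first-order symbolic calculus. I would start from the explicit kernel representations of $\mT_A\mT_B$ (resp.\ $(\mT_A)^\star$) and compare with those of $\mT_{AB}$ (resp.\ $\mT_{\overline{A}}$) via a Taylor expansion of $\Sigma_A^\psi(\x,\xi)$ around a nearby base point,
\[ \Sigma_A^\psi(\x,\xi)=\Sigma_A^\psi(\y,\xi)+(\x-\y)\cdot\int_0^1 \nabla_\x\Sigma_A^\psi\bigl((1-t)\y+t\x,\xi\bigr)\,\dd t.\]
The principal terms reconstruct $\mT_{AB}$ (resp.\ $\mT_{\overline{A}}$). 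The remainder, after integration by parts in $\x$ against the other kernel, trades one factor of $|\xi|$ against one $\x$-derivative hitting $A$; this is precisely where the $W^{1,\infty}$ regularity in $\x$ (encoded in $M_1^m(A;\cdot)$) is used, and it provides the one-order gain, so that the remainder lies in $\Gamma^{m+m'-1}_0$ (resp.\ $\Gamma^{m-1}_0$). Item (i), applied to the remainder symbols, then yields the claimed operator bounds, with the seminorms $M_1^m(A;3)$ and $M_1^{m'}(B;3)$ absorbing the additional $\xi$-derivatives generated by the Taylor integration.

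The main technical obstacle is the bookkeeping of the frequency-localization argument: one must verify that the Taylor remainders genuinely define symbols in the correct class $\Gamma_0^{\cdot}$, with seminorms controlled quantitatively by $M_1^{\cdot}(A;3)$ and $M_1^{\cdot}(B;3)$, while carefully tracking the contributions of the admissible cut-off $\psi$ under differentiation in $\xi$. This is a routine but lengthy exercise in dyadic analysis, carried out in full detail in~\cite{M'etivier08}, which is why we content ourselves with citing it.
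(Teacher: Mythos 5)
The paper offers no proof of this proposition at all — it is recalled verbatim from M\'etivier's lecture notes (display (5.25) and Theorems 6.1.4 and 6.2.4 there), and your sketch is precisely the standard Littlewood--Paley argument carried out in that reference, correctly using that $d\leq 2$ is what fixes the derivative counts $2$ and $3$ in the seminorms. Since you too ultimately defer the bookkeeping to the same citation, your proposal matches the paper's treatment.
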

We will also make use of the particular cases of Fourier multipliers and paraproducts:
\begin{Proposition}[\cite{M'etivier08}, Theorems 5.1.15,5.2.8]\label{P.paradiff-particular cases} Let $A\in \Gamma^m_k$ with $m,k\in\NN$
\begin{enumerate}
\item If $A$ depends only on $\xi $: $A=A(\xi)\in C^\infty(\RR^d)$, then $\mT_A= A(D)$
where $A(D)$ is the Fourier multiplier associated with $A$, \ie
\[ \forall U\in L^2(\RR^d)^{N(1+d)},\ \forall\xi\in\RR^d,\qquad \widehat{\mT_AU}(\xi)  = \widehat{A(D)U}(\xi) = A(\xi)\widehat{U}(\xi).\]
\item If $A$ depends only on $\x$: $A=A(\x)\in W^{k,\infty}(\RR^d)$ with $k\in\NN$, then
\begin{align*}
\Norm{\mT_A U- A U}_{H^k} &\leq C(k) \Norm{  A }_{W^{k,\infty} } \norm{U}_{L^2},\\
\forall |\alpha|\leq k, \quad  \Norm{\mT_A \partial^\alpha U- A \partial^\alpha U}_{L^2} &\leq C(k) \Norm{  A }_{W^{k,\infty} } \norm{U}_{L^2}.\end{align*}
\end{enumerate}
\end{Proposition}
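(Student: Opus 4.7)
The plan is to deduce both claims directly from the definition of the paradifferential quantization~\eqref{def-para} together with standard spectral localization properties of the admissible cut-off $\psi$. I will only use that $\psi(\eta,\xi)$ is supported in a region $|\eta|\leq \epsilon(1+|\xi|)$, equals $1$ on a smaller cone of the same type, and in particular satisfies $\psi(0,\xi)=1$ for every $\xi\in\RR^d$.

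For part (i), since $A=A(\xi)$ has no $\x$ dependence, its $\x$-Fourier transform is a Dirac mass at the origin, $\mF_\x A(\eta,\xi)=(2\pi)^d\delta_0(\eta)\,A(\xi)$. Substituting into $\mF_\x \Sigma_A^\psi(\eta,\xi)=\psi(\eta,\xi)\mF_\x A(\eta,\xi)$ and using $\psi(0,\xi)=1$ gives $\Sigma_A^\psi(\x,\xi)=A(\xi)$, so that~\eqref{def-para} collapses to the Fourier multiplier $A(D)$.

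For part (ii), the strategy is to use Bony's paraproduct decomposition $A\,U=\mT_A U+\mT_U A+R(A,U)$, so that $\mT_A U - A U = -\mT_U A - R(A,U)$, and to estimate the two ``small'' terms by dyadic means. Writing $A$ and $U$ in Littlewood--Paley blocks, $\mT_U A=\sum_k S_{k-N_0}U\cdot \Delta_k A$ has summands with essentially disjoint spectra in shells of size $2^k$, while $R(A,U)=\sum_{|k-j|\leq N_0}\Delta_k A\cdot \Delta_j U$ has summands whose spectra are contained in balls of size $2^k$. The key point is that all the smoothness sits on $A\in W^{k,\infty}$: each block satisfies the Bernstein-type estimate $\Norm{\Delta_k A}_{L^\infty}\leq C\,2^{-k\ell}\Norm{A}_{W^{\ell,\infty}}$ for any $0\leq \ell\leq k$. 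Combining this with $L^\infty$ or $L^2$ control of the blocks of $U$, using the near-disjointness of spectra to apply Plancherel blockwise, and summing via Cauchy--Schwarz in $k$ yields the first inequality $\Norm{\mT_U A+R(A,U)}_{H^k}\leq C(k)\Norm{A}_{W^{k,\infty}}\Norm{U}_{L^2}$.

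For the second inequality, I would write $\mT_A \partial^\alpha U - A \partial^\alpha U = -\mT_{\partial^\alpha U} A - R(A,\partial^\alpha U)$ and transfer the derivatives $\partial^\alpha$ off $U$ and onto $A$ inside each Littlewood--Paley summand via the Fourier representation, absorbing the factor $2^{k|\alpha|}$ into the gain $2^{-k\ell}\Norm{A}_{W^{\ell,\infty}}$ with $\ell=|\alpha|\leq k$. The same $\ell^2(k)$ summation then produces the desired $L^2$ bound. The main delicate point in this plan is precisely this exchange of derivatives: one must justify that $\mT_{\partial^\alpha U}A$ and $R(A,\partial^\alpha U)$ make sense as tempered distributions and that the blockwise manipulation is legal, which is where the constraint $|\alpha|\leq k$ on the regularity of $A$ is essential; the rest is a routine dyadic computation.
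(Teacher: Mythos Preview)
The paper does not actually prove this proposition: it is stated with an explicit citation to M\'etivier's monograph (Theorems~5.1.15 and~5.2.8 there) and no argument is given. So there is no ``paper's own proof'' to compare against; the relevant comparison is with the standard proofs in~\cite{M'etivier08}, and your outline follows exactly that route.

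Your argument for part~(i) is correct and is precisely the standard one. For part~(ii), invoking Bony's decomposition $AU=\mT_AU+\mT_UA+R(A,U)$ and estimating the two remainder pieces via Littlewood--Paley blocks, Bernstein's inequality for $\Delta_k A$, and almost-orthogonality is again the standard approach, and your sketch of the first inequality is fine. The one point that deserves a word of caution is your treatment of the second inequality: writing $\mT_{\partial^\alpha U}A$ is formally problematic since $\partial^\alpha U$ is only in $H^{-|\alpha|}$ and the paraproduct $\mT_f$ is usually set up for $f\in L^\infty$ (or a symbol class). The clean way around this---and what is effectively done in~\cite{M'etivier08}---is not to write $\mT_{\partial^\alpha U}A$ at all, but to work directly with the dyadic sum $\sum_k S_{k-N_0}(\partial^\alpha U)\,\Delta_k A$ and observe that $\|S_{k-N_0}(\partial^\alpha U)\|_{L^2}\lesssim 2^{k|\alpha|}\|U\|_{L^2}$, which is then compensated by $\|\Delta_k A\|_{L^\infty}\lesssim 2^{-k|\alpha|}\|A\|_{W^{|\alpha|,\infty}}$. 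You say essentially this in your last paragraph, so the plan is sound; just avoid the notation $\mT_{\partial^\alpha U}$ and work blockwise from the start.
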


\section{Eigenstructure of our system}\label{S.spectral}

In this section, we give some information on the eigenstructure of the operators at stake in the multilayer shallow water model~\eqref{FS-V-mr}, namely $\B^x[\V]$ defined in~\eqref{FS-V-compact}; the translation in terms of the initial formulation~\eqref{FS-U-intro}, or $\A^x[\U]$, is immediate through the similarity transform~\eqref{AtoB}. More precisely, we show that the above matrices are semisimple provided that each layer's depth is positive and the shear velocities are sufficiently small. This provides the complete eigenstructure of the full symbol $\xi^x\B^x[\V]+\xi^y\B^y[\V]$ thanks to the rotational invariance property (see Lemma~\ref{L.B}). We use this result in order to construct a symmetrizer of the system with the desired properties described in Section~\ref{S.normal-form} (Lemma~\ref{L.T}), but we believe that such information is of independent interest.

Indeed, despite numerous works on the subject, the available information on the domain of hyperbolicity and eigenstructure of the multilayer shallow-water model is very sparse outside of the one or two-layer situation. The one-layer case is very classical and there is no need to discuss the subject here. Pioneer work on the two-layer case, in the limit $\r\ll 1$ and dimension $d=1$ setting, include~\cite{SchijfSchonfled53} for the free-surface case and~\cite{Long56} for the rigid-lid situation. We let the reader refer to~\cite{BarrosChoi08} and references therein for the case of $\r\geq \r_0>0$. Additional, numerical information may be found in~\cite{Castro-DiazFernandez-NietoGonzalez-VidaEtAl11}. Sufficient criteria for the hyperbolicity of the bi-fluidic shallow water model in the general situation of dimension $d=2$ and free-surface situation are provided in~\cite{Duchene10,Monjarret}, while the rigid-lid setting is treated in~\cite{GuyenneLannesSaut10,BreschRenardy11}. Starting from $N= 3$ layers, explicit results become out of reach except for very specific situations; see~\cite{ChumakovaMenzaqueMilewskiEtAl09,Frings12,StewartDellar13}. 

In the general case of $N$ layers, the author provides in~\cite{Duchene} a non-explicit sufficient condition for strict hyperbolicity in the case of dimension $d=1$, and for $\r\geq\r_0>0$, using that the eigenproblem, in absence of shear velocities, reduces to a finite-difference analogue of the Sturm-Liouville problem (a similar tridiagonal reduction already appeared much earlier in the literature; see~\cite{Baines88} and references therein). In~\cite{Monjarreta}, Monjarret provides a sufficient criterion for hyperbolicity, for $d=1$ or $d=2$, by exhibiting the symmetrizer recalled in Section~\ref{S.WP-naive}. Very precise information concerning the eigenstructure are also provided in an asymptotic limit which does not fit in our situation when $N\geq 3$, since it requires a sharp scale separation for densities between each layer; see~\cite[(4.1)]{Monjarreta}.  In this section, we build upon these works, by showing that the strategy of~\cite{Duchene} extends to dimension $d=2$ and arbitrarily small density contrast $\r$; under hypotheses on the flow which are fully consistent (although, again, non-explicit) with the criteria given in~\cite{Monjarreta}. 

Roughly speaking, we show that provided that shear velocities are not too large, the nonlinear evolution of the flow maintains $N$ modes of propagation. One of them is the barotropic mode, and is responsible for the singular time oscillations of the system in the limit $\r\to0$. In our scaling, the wave speed of the $N-1$ baroclinic modes are uniformly bounded from above and from below, and remain isolated if the shear velocities are sufficiently small.
\medskip

Let us fix $h_0>0$ and denote
\begin{align*} 
\VV&=\big\{  (\r^{-1}\zeta_1,\zeta_2,\dots,\zeta_N,\star,\dots,\star)^\top\in \RR^{N(1+d)} \text{ such that \eqref{C.depth} holds with $h_0$} \big\}, \\
\ZZ&=\big\{  (\r^{-1}\zeta_1,\zeta_2,\dots,\zeta_N,0,\dots,0)^\top\in \RR^{N(1+d)} \text{ such that \eqref{C.depth} holds with $h_0$} \big\}.
\end{align*}

We first remark that in the case of dimension $d=2$, there are $N$ trivial eigenvalues of $\frac1\r\B^x[\V]$.
\begin{Lemma}\label{L.B-trivialeigen}
Let $\V\in \VV$. There are $N$ linearly independent eigenvectors of $\frac1\r\B^x[\V]$, with corresponding eigenvalue $\mu^0_n\eqdef u^x_n(\V)$, as given by~\eqref{VtoU}, associated with rank-one eigenprojections
\begin{equation} \label{def-P0}
\P^0_n[\V] \eqdef (\J^F[\V])^{-1} {\sf \Pi}_{2N+n}   \J^F[\V]  ,
\end{equation}
where $ {\sf \Pi}_n$ is the orthogonal projection onto the $n^{\rm th}$ variable. 
\end{Lemma}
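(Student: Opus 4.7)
The natural approach is to leverage the similarity relation~\eqref{AtoB}, namely $\B^x[\V]=(\J^F[\V])^{-1}\A^x[F(\V)]\J^F[\V]$, so as to transport the eigenstructure of $\frac1\r\A^x[\U]$, with $\U\eqdef F(\V)$, to that of $\frac1\r\B^x[\V]$ by conjugation. Under this scheme, the claimed projections $\P^0_n[\V]$ arise precisely as the conjugates of ${\sf \Pi}_{2N+n}$, and it remains only to verify that $\{\e_{2N+n}\}_{n=1}^N$ diagonalize $\frac1\r\A^x[\U]$ on an $N$-dimensional invariant subspace.

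The key observation is the block structure of $\frac1\r\A^x[\U]$ displayed in~\eqref{def-A}: both the last $N$ rows and the last $N$ columns (corresponding to the $u^y$ components) vanish outside of the diagonal block $\D(u^x)$. Hence the subspace $\vect(\e_{2N+1},\dots,\e_{3N})$ is invariant, and $\frac1\r\A^x[\U]$ reduces on it to the diagonal matrix $\D(u^x)=\diag(u^x_1,\dots,u^x_N)$. It follows that, for each $n\in\{1,\dots,N\}$, the canonical basis vector $\e_{2N+n}$ is an eigenvector of $\frac1\r\A^x[\U]$ with eigenvalue $\mu_n^0\eqdef u^x_n(\V)$ as given by~\eqref{VtoU}, while the orthogonal projection ${\sf \Pi}_{2N+n}$ onto its span is a rank-one projection whose range is invariant and on which $\frac1\r\A^x[\U]$ acts as multiplication by $\mu_n^0$.

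Transporting back through the similarity, I set $\P^0_n[\V]\eqdef(\J^F[\V])^{-1}{\sf \Pi}_{2N+n}\J^F[\V]$. Since conjugation by the invertible matrix $\J^F[\V]$ (see Lemma~\ref{L.J}) preserves both the idempotence identity and the rank, $\P^0_n[\V]$ is a rank-one projection, with range spanned by $(\J^F[\V])^{-1}\e_{2N+n}$; and this vector is an eigenvector of $\frac1\r\B^x[\V]$ with eigenvalue $\mu_n^0$. Finally, the linear independence of the $N$ eigenvectors $\{(\J^F[\V])^{-1}\e_{2N+n}\}_{n=1}^N$ follows from the linear independence of $\{\e_{2N+n}\}_{n=1}^N$ together with the invertibility of $\J^F[\V]$. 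I do not anticipate any significant obstacle: the result is essentially a direct reading of the block-diagonal decoupling of the $u^y$-variables in the original formulation, pulled back through~\eqref{AtoB}.
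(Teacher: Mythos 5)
Your proposal is correct and follows exactly the paper's (much terser) argument: read off the block structure of $\frac1\r\A^x[\U]$ in~\eqref{def-A}, where the last $N$ rows and columns vanish off the diagonal block $\D(u^x)$, and conjugate by $\J^F[\V]$ via~\eqref{AtoB}. Your observation that both the rows and the columns vanish is the right one, since it guarantees that ${\sf \Pi}_{2N+n}$ commutes with $\frac1\r\A^x[F(\V)]$ and hence that $\P^0_n[\V]$ is genuinely a spectral (eigen)projection and not merely a projection onto an eigenvector.
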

\begin{proof}
The result is straightforward when using the block formulation of $ \A^x[\U] $,~\eqref{def-A}, and the similarity transformation~\eqref{AtoB}, \ie $\B^x[\V] = (\J^F[\V])^{-1}\A^x[F(\V)] \J^F[\V]$.
\end{proof}

In the following Lemmata, we give sufficient conditions on $\V\in \VV$ allowing to complete the basis of eigenvectors with distinct and real associated eigenvalues.
\begin{Lemma}\label{L.B0-eigen}
Let $\Z\in \ZZ$. Then for any $\r>0$, $\frac1\r\B^x[\Z] $ has $2N$ distinct, real, non-zero eigenvalues, $\mu_{\pm n}(\Z)=\pm \mu_n(\Z)$, $n\in \{1,\dots,N\}$, with
\[ -\mu_1(\Z)<\dots<-\mu_N(\Z)<0<\mu_N(\Z)<\dots<\mu_1(\Z).\]
Moreover, one can set $\r_0^{-1},C_0=C(\m,h_0^{-1},\norm{\Z})$ such that if $\r\in(0,\r_0)$, then
\begin{subequations}
\begin{gather}  
\label{muZ-1}
C_0^{-1}\leq \r|\mu_{1}(\Z)|\leq C_0 \quad \text{ and } \quad C_0^{-1}\leq|\mu_n(\Z)|\leq C_0,\\
\label{muZ-2}|\mu_n(\Z)-\mu_{n-1}(\Z)|\geq C_0^{-1}\quad (n\geq 2).
  \end{gather}
The associated eigenprojections satisfy (recall the definition of $\Pifx$ in~\eqref{def-Pi})
\begin{align} 
\label{PZ-1} \Norm{ \P_{\pm 1}[\Z]  }+ \Norm{ \P_{\pm n}[\Z] } &\leq   C(\m,h_0^{-1},\norm{\Z}) \qquad (n\geq 2),\\
\label{PZ-2}
\Norm{ \P_{\pm 1}[\Z] \Pisx }+ \Norm{ \Pisx \P_{\pm 1}[\Z]  }& \leq \r \  C(\m,h_0^{-1},\norm{\Z}),\\
\label{PZ-3}
\Norm{ \P_{\pm n}[\Z] \Pifx }+ \Norm{ \Pifx \P_{\pm n}[\Z] }&\leq \r \  C(\m,h_0^{-1},\norm{\Z})\ \qquad (n\geq 2) .
\end{align}
All these objects are smooth with respect to $\r\in(0,\r_0)$ and $\Z\in \ZZ$. 
\end{subequations}
\end{Lemma}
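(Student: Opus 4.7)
The plan is to exploit the fact that at $\Z\in\ZZ$, where all velocity-type components vanish, the matrix $\B^x[\Z]$ has a particularly favorable block structure. Lemma~\ref{L.B-trivialeigen} already accounts for $N$ trivial zero eigenvalues associated with the transverse velocity directions. Restricting to the $2N$-dimensional complement spanned by $(\rho^{-1}\zeta_1,\zeta_2,\dots,\zeta_N,v^x_2,\dots,v^x_N,w^x)$, direct inspection of~\eqref{FS-V-mr} at $\u\equiv\z$ shows that the reduced matrix takes the anti-block-diagonal form $\left(\begin{smallmatrix}0 & {\sf H} \\ {\sf K} & 0\end{smallmatrix}\right)$ with smooth $N\times N$ blocks ${\sf H}={\sf H}[\Z]$ and ${\sf K}={\sf K}[\Z]$. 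The spectrum is therefore symmetric under $\lambda\mapsto-\lambda$, so it suffices to analyze the $N\times N$ product $\M\eqdef{\sf H}{\sf K}$, whose eigenvalues are the squares $\nu_n=(\rho\mu_n)^2$.

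The second step is to identify the $\r$-dependence of $\M$. Only the first row of ${\sf H}$ and the first column of ${\sf K}$ carry negative powers of $\r$: a $\r^{-1}$ factor arises from the equation for $\r^{-1}\zeta_1$, and another from $\gamma_1/(1-\gamma_1)=\gamma_1/\r^2$ combined with the rescaling $\zeta_1=\r\cdot(\r^{-1}\zeta_1)$. This yields a clean decomposition $\M=\r^{-2}\M_{\rm f}+\M_{\rm s}$ where $\M_{\rm f}$ is rank-one, supported on the $(1,1)$ entry with weight $\gamma_1\sum_j\gamma_j^{-1}h_j$, while $\M_{\rm s}$ is uniformly bounded by $C(\m,h_0^{-1},\norm{\Z})$. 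The strong scale separation makes Kato's perturbation theory~\cite{Kato95} directly applicable: a holomorphic contour of radius $O(\r^{-1})$ isolates a single large eigenvalue $\nu_1(\Z)$ with $\r^2\nu_1\to\gamma_1\sum_j\gamma_j^{-1}h_j$ as $\r\to0$, while the remaining $N-1$ eigenvalues lie in a bounded region independent of $\r$.

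To establish realness, distinctness and positivity of the $N-1$ slow eigenvalues, I would then reduce to a symmetric formulation. After an appropriate linear change of basis on the $(\zeta_2,\dots,\zeta_N)$ components (inspired by the shear-like weights in~\eqref{UtoV}), the slow block becomes similar to the matrix governing the baroclinic modes of the rigid-lid system~\eqref{RL} linearized around rest. As already exploited in~\cite{Duchene} in the one-dimensional case, this matrix admits a similarity transform into a symmetric tridiagonal matrix with positive off-diagonal entries --- a discrete Sturm--Liouville operator whose eigenvalues are automatically real, simple and positive, with a uniform spectral gap depending only on $\m$ and $h_0^{-1}$. By standard perturbation arguments, these properties persist for $\r$ small, which delivers the scaling and ordering~\eqref{muZ-1}--\eqref{muZ-2}.

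Finally, the eigenprojection estimates follow via Dunford's integral representation. The upper bounds~\eqref{PZ-1} are obtained by integrating $(z-\frac1\r\B^x[\Z])^{-1}$ along contours of appropriate size --- $O(\r^{-1})$ for the fast eigenvalue and $O(1)$ for the slow ones --- combined with the uniform control of $\J^F[\Z]^{\pm1}$ from Lemma~\ref{L.J}. Estimates~\eqref{PZ-2}--\eqref{PZ-3} reflect the geometric content of the decomposition $\M=\r^{-2}\M_{\rm f}+\M_{\rm s}$: the fast eigenvectors concentrate on $\ran(\Pifx)$ with $O(\r)$ correction in the slow directions, and conversely for the slow eigenvectors, which can be made quantitative by expanding the resolvent in powers of $\r$. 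The main obstacle I expect is precisely the sharp $O(\r)$ tracking in~\eqref{PZ-2}--\eqref{PZ-3} rather than a naive $O(1)$: it requires carefully exploiting the fact that both $\r^{-1}$ factors in ${\sf H}$ and ${\sf K}$ fall on the same fast subspace, which is the matrix manifestation of the ``balanced'' structure of~\eqref{FS-V-mr} emphasized throughout the paper. Smoothness in $(\r,\Z)$ is then immediate from the simplicity of each eigenvalue and the holomorphic functional calculus.
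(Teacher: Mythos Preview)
Your overall strategy is close to the paper's, but there is a concrete error in your decomposition of $\M={\sf H}{\sf K}$. You claim $\M=\r^{-2}\M_{\rm f}+\M_{\rm s}$ with $\M_{\rm s}$ uniformly bounded, but this is not true: the first row and first column of $\M$ carry $O(\r^{-1})$ coupling terms. Indeed, the only nonzero entry in the first row of ${\sf H}$ sits in the $w^x$-column with value $\r^{-1}$, so $(\M)_{1,j}=\r^{-1}K_{N,j}$, and since the $w^x$-row of ${\sf K}$ has $O(1)$ entries in columns $j\geq 2$ one gets $(\M)_{1,j}=O(\r^{-1})$ there; symmetrically $(\M)_{i,1}=O(\r^{-1})$ for $i\geq 2$. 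The correct form is $\M=\r^{-2}\M_{\rm f}+\r^{-1}\M_{\rm m}+\M_{\rm s}$ with $\M_{\rm m}$ supported on the first row and column. This does not destroy your program --- one can verify by hand that the slow eigenvectors of $\M$ have first component $O(\r)$, and a further cancellation then forces the $w^x$-component of the corresponding $2N$-dimensional eigenvector to be small as well --- but the proposal as written does not account for $\M_{\rm m}$, and your Sturm--Liouville reduction of the ``slow block'' is stated as if $\M_{\rm m}$ were absent.

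The paper sidesteps this by a device you may find cleaner: instead of analysing ${\sf HK}$, it passes via~\eqref{AtoB} to the blocks ${\sf H},{\sf R}$ of $\A^x[\Z]$ and studies the \emph{inverse} $({\sf H}{\sf R})^{-1}$, which is similar to a symmetric tridiagonal matrix $\T^\r$ with positive off-diagonals and uniformly bounded entries for every $\r>0$. Distinct real positive eigenvalues $0<\lambda_1<\dots<\lambda_N$ follow in one stroke from standard tridiagonal theory, with $\lambda_1=O(\r^2)$ read off from the determinant; no prior fast/slow splitting is needed. For~\eqref{PZ-2}--\eqref{PZ-3} the paper then writes $\B^x[\Z]=\L+\r\,\delta\B[\Z]$ and confronts the fact that the $2N-2$ slow eigenvalues of $\B^x[\Z]$ all merge at the semisimple eigenvalue $0$ of $\L$ as $\r\to 0$ --- an exceptional point in Kato's sense, where a Dunford contour of radius $O(\r)$ gives a resolvent of size $O(\r^{-1})$ and a naive series expansion is not obviously convergent. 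The resolution is to extend $\T^\r$ holomorphically to complex $\r$, use \cite[Theorem~II.1.10]{Kato95} (normality along real $\r$) to get holomorphic eigenvalues, and then invoke the reduction process \cite[Theorem~II.2.3]{Kato95} to conclude that each $\P_{\pm n}[\Z]$ is holomorphic at $\r=0$; the $O(\r)$ bound follows since $\P_{\pm n}[\Z]\big|_{\r=0}$ projects onto $\ker(\L)$. Your ``expanding the resolvent in powers of $\r$'' would have to address this exceptional-point obstruction directly, which you correctly flag as the main obstacle but do not resolve.
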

\begin{proof}
The eigenvalue problem concerning $\frac1\r\B^x[\Z] $ is related by~\eqref{AtoB} to the one for $\frac1\r\A^x[\Z]$, which given the simple block-structure exhibited in~\eqref{def-A} may be reduced, for $\Z\in\ZZ$, to the eigenvalue problem for the following tridiagonal matrix
\[ ({\sf H}{\sf R})^{-1}=\D(\e_\r)\D(\t r)^{-1}{\sf \Delta}^{\top}\D(h^{-1})\D(\gamma){\sf \Delta}\D(\e_\r)\]
 (recall notations and identities in the proof of Proposition~\ref{P.WP-naive}). Equivalently, we consider the eigenvalue problem for
\[ \T^\r\eqdef \D(\t r)^{1/2} ({\sf H}{\sf R})^{-1}\D(\t r)^{-1/2}=\D(\e_\r)\D(\t r)^{-1/2}{\sf \Delta}^{\top}\D(h^{-1})\D(\gamma){\sf \Delta}\D(\t r)^{-1/2}\D(\e_\r).\]
Since $\T^\r$ is a real, symmetric tridiagonal matrix with non-zero (positive) off-diagonals entries~\cite{Wilkinson65}, there exists $\lambda_1<\dots<\lambda_N$ and $(\x_1,\dots,\x_N)$ an orthonormal basis of $\RR^N$ such that
\[ \forall n\in \{1,\dots, N\}, \quad \T^\r \x_n = \lambda_n \x_n.\]
Since $\gamma_n,r_n,h_n^{-1},\r>0$, one may check that $\det_n(\T^\r)>0$ where $\det_n(\T^\r)$ is the determinant of the $n$-by-$n$ upper-left submatrix (\ie leading principal minor) of $\T^\r$, from which we deduce $\lambda_1>0$.

Now, let us consider
\[ \T^0=\D(\e_0)\D(\t r)^{-1/2}{\sf \Delta}^{\top}\D(h^{-1})\D(\gamma){\sf \Delta}\D(\t r)^{-1/2}\D(\e_0),\]
\ie the matrix obtained from $\T^\r$ by setting to zero the first row's and column's entries. Using the above analysis on the leading principal minor of order 1, one obtains immediately that there exists
$ 0=\lambda_1^0<\lambda_2^0<\dots< \lambda_N^0$ and $\x_n^0$, such that $ \T^0 \x_n^0 = \lambda_n \x_n^0$ ($n=1,\dots,N$). The above eigenvalues and eigenvectors depend continuously on the parameters which are, by assumption, bounded in a compact set; in particular there exists $C_0=C(\m,h_0^{-1},\norm{\Z})$, for a given $N$, such that 
\[ C_0^{-1} <\lambda_2^0<\dots< \lambda_N^0<C_0 \quad \text{ and } \quad  |\lambda_n^0-\lambda_{n-1}^0|>C_0^{-1}\qquad (n=2,\dots,N).\]
Again by continuity, one can set $\r_0^{-1}=C(\m,h_0^{-1},\norm{\Z})$ such that if $\r\in(0,\r_0)$, then $\lambda_n$ satisfy the above (replacing $C_0$ with $2C_0$). Furthermore, we have (augmenting $C_0$ if necessary)
\[  \r^2 \ C_0^{-1} \leq \lambda_1 = \det(\T^\r) \times \left(\Pi_{n=2}^N \lambda_n \right)^{-1} \leq \r^2 C_0.\]

Going back to the original problem, one deduces from~\eqref{AtoB} and~\eqref{def-A} that for any $n\in\NN$, $\mu_{\pm n}\eqdef \pm \lambda_n^{-1/2}$ is an eigenvalue of $\frac1\r\B^x[\Z] $.
This concludes the proof of the first part of the statement with~\eqref{muZ-1},\eqref{muZ-2}.

Notice now that the above analysis is not restricted to $\r>0$. Indeed, the formula for $\T^\r$ above defines a (complex) tridiagonal matrix for any $\r\in\CC$ and is single-valued, as least for $\r$ sufficiently small. Since the off-diagonal entries do not vanish, $\T^\r$, and therefore $\frac1\r\B^x[\Z]$ has $2N$ distinct and non-zero eigenvalues for any $\r\neq 0$. Moreover, we are in the situation of~\cite[Theorem II.1.10]{Kato95}, namely $\T^\r$ is normal for a sequence $\r_i\to 0$ (simply restricting to $\r_i\in\RR$), and therefore $\lambda_n(\r)$ and the associated eigenprojection are holomorphic around $\r=0$.

 The extra information concerning the corresponding eigenprojections in the limit of vanishing $\r$ are obtained thanks to standard perturbation theory~\cite[Chapter II.1]{Kato95}. Indeed, from~\eqref{FS-V-mr} (see also the proof of Lemma~\ref{L.B}), one can write for any $\r\in\CC$,
\[ \B^x[\Z] = \L+ \r\ \delta\B[\Z] \]
with
\[ \L=\left(\begin{MAT}(b){c:c:c}
{\sf 0}_N& {\sf N} & {\sf 0}_N \\:
 \alpha{\sf N}^\top &{\sf 0}_N &  {\sf 0}_N\\:
  {\sf 0}_N& {\sf 0}_N& {\sf 0}_N \\
\end{MAT}\right), \quad {\sf N}=\left(\begin{MAT}{ccc:c}
0 &\dots  & 0 & 1 \\:
&&& 0 \\
 &{\sf 0}_{N-1}&& \vdots \\
  &&& 0\\
\end{MAT}\right) , \quad \alpha=\gamma_1\sum_{j=1}^N\delta_j;\]
and $\delta\B[\Z]$ is smooth with respect to $\Z$ and holomorphic with respect to $\r$, and satisfies
\[ \Norm{\delta \B[\Z]}\leq C(\m,h_0^{-1},\norm{\Z})  .\]
It is obvious that $\L$ has only two non-zero eigenvalues:
\[ \L \x^\L_\pm = \pm \sqrt{\alpha} \x^\L_\pm,\]
with $\Pisx \x^\L_\pm=\z$.

Using that $\r\mu_{\pm 1}$ (resp. $\pm\sqrt{\alpha}$) is a simple eigenvalue of $\B^x[\Z]$ (resp. $\L$),  we introduce the Dunford-Taylor integral for the eigenprojection 
\[
\P_{\pm1}[\Z]=\frac{-1}{2\pi i}\int_{\Gamma_{\pm1}}\sum_{k=0}^\infty (\L-\eta\Id)^{-1}(-\r \delta\B[\Z] (\L-\eta\Id)^{-1})^k\ d\eta,
\]
where $\Gamma_{\pm1}$ is a positively oriented closed curve enclosing $\r\mu_{\pm 1}$ as well as $\pm\sqrt{\alpha}$ , but excluding the other eigenvalues of $\B^x[\Z]$ and $\L$, namely $0$ and $\r\mu_{\pm n}$ ($n\geq 2$). One can restrict $|\r|<\r_0$, $\r_0^{-1}=C(\m,h_0^{-1},\norm{\Z})$ such that $|\r|\Norm{ \delta\B[\Z]} \max_{\eta\in \Gamma_{\pm}}\Norm{ (\L-\eta\Id)^{-1}}\leq 1/2$, and therefore the series in the Dunford-Taylor integral is uniformly convergent. In particular,
\[ \P_{\pm1}[\Z] =\frac{-1}{2\pi i}\int_{\Gamma_{\pm1}} (\L-\eta\Id)^{-1} \ d\eta+\frac{\r }{2\pi i}\int_{\Gamma_{\pm1}}(\L-\eta\Id)^{-1}  \sum_{k=1}^\infty \r^{k-1}\left(- \delta\B [\Z] (\L-\eta\Id)^{-1}\right)^k\ d\eta.\]
The first term on the right hand side is exactly the eigenprojection onto $\x^\L_\pm$, and the series in the second term is immediately estimated. We deduce 
\[ \Norm{ \P_{\pm 1}[\Z]   } \leq  C(h_0^{-1},\r,\norm{\Z}), \quad \Norm{ \Pisx \P_{\pm 1}[\Z]  } + \Norm{ \P_{\pm 1}[\Z] \Pisx  } \leq  \r\  C(h_0^{-1},\r,\norm{\Z}),\]
thus the first part of~\eqref{PZ-1} and~\eqref{PZ-2} is proved.
\medskip

One cannot directly use the same technique for the other eigenvalues, as they correspond to an {\em exceptional} point of $\B^x[\Z]=\L+\r\delta\B[\Z]$, namely $0$ is an eigenvalue of $\L$ with algebraic multiplicity $N(1+d)-2$, and splits  for $\r\neq 0$ in $2N-2$ distinct eigenvalues, $\r \mu_{\pm n}(\Z),n\geq2$ (completed with the $(d-1)N$ linearly independent eigenvectors given in Lemma~\ref{L.B-trivialeigen} which remain in the kernel). As a consequence, the Dunford-Taylor integral around this group of eigenvalues only yields a control of the total projection, namely the sum of the corresponding eigenprojections; or, if one integrates around a single eigenvalue, one cannot in general ensure that the series is convergent, even for $\r$ small.

 However, we have seen that $\lambda_n(\r)$ is holomorphic in $\r$ and converges towards $\lambda_n(0)>0$ as $\r\to0$, so that $\r\mu_{\pm n}(\Z)=\pm \r \lambda_n(\r)^{-1/2}$ is holomorphic near $\r=0$. This means~\cite[Theorem~II.1.8]{Kato95} that the exceptional point is not a {\em branch} point, and therefore $\P_{\pm n}[\Z]$ is single-valued, but still may have a pole at $\r=0$. Using now that $0$ is a semisimple eigenvalue of the unperturbed operator, $\L$, and that we have shown that $\r \mu_{\pm n}(\Z)$, the eigenvalues of $\B^x[\Z]$ splitting from $0$ are simple (as they are one-dimensional), we may use the so-called reduction process, and deduce~\cite[Theorem~II.2.3]{Kato95} that the associated spectral projections, $\P_{\pm n}[\Z]$, are actually holomorphic at $\r=0$. 
 
 This shows~\eqref{PZ-1}, and by continuity that $\P_{\pm n}[\Z]\id{\r=0}$ is a projection onto the kernel of $\L$. Thus $(\P_{\pm n}[\Z]\id{\r=0})\Pifx =\Pifx(\P_{\pm n}[\Z]\id{\r=0})={\sf 0}_{N(1+d)}$, and~\eqref{PZ-3} follows. Lemma~\ref{L.B0-eigen} is proved.
\end{proof}

We now deduce from Lemma~\ref{L.B0-eigen}, thanks to standard perturbation theory, the corresponding information on the eigenvalue problem for any $\V\in\VV$.
\begin{Lemma}\label{L.B-eigen}
Let $\V\in \VV$. Then one can set $\r_0^{-1},\nu=C(\m,h_0^{-1},\norm{\V})$ such that if $\r\in(0,\r_0)$ and $\V$ satisfies additionally
\[\forall n\in \{2,\dots, N\}, \quad  \big| v_n^x\big|+\big| v_n^y\big| \leq \nu^{-1},\]
then the matrix $\frac1\r\B^x[\V]$ is diagonalizable. In addition to the $N$ ``trivial'' eigenvectors described in Lemma~\ref{L.B-trivialeigen}, $\frac1\r\B^x[\V]$ has $2N$ eigenvectors corresponding to distinct and real eigenvalues, $\mu_{\pm n}(\V)$, such that
\[ \mu_{-1}(\V)<\dots<\mu_{-N}(\V)<\mu_N(\V)<\dots<\mu_1(\V).\]
Moreover, there exists $C_0=C(\m,h_0^{-1},\norm{\V})$ such that 
\begin{subequations}
\begin{gather}  
\label{muV-1}
C_0^{-1}\leq \r|\mu_{\pm 1}(\V)|\leq C_0 \quad \text{ and } \quad |\mu_{\pm n}(\V)|\leq C_0 \qquad (n\geq 2),\\
\label{muV-2}|\mu_{- N}(\V)-\mu_{N}(\V)|\geq C_0^{-1} \quad \text{ and } \quad |\mu_{\pm n}(\V)-\mu_{\pm(n-1)}(\V)|\geq C_0^{-1} \qquad (n\geq 2).
  \end{gather}
The associated spectral projections are smooth with respect to $\V\in \VV$; and, for any $\V_1,\V_2\in \VV$,
\begin{align}
\label{PV-1}
\Norm{ \P_{\pm 1}[\V_1] -  \P_{\pm 1}[\V_2]  } &\leq C_1\ \r \norm{\V_1-\V_2},\\
 \label{PV-2}
\Norm{  (\P_{\pm n}[\V_1]  - \P_{\pm n}[\V_2])\Pifx } &\leq C_1\ \r \norm{\V_1-\V_2} \qquad (n\geq 2),\\
\label{PV-3}
 \Norm{ \P_{\pm n}[\V_1] - \P_{\pm n}[\V_2] } &\leq \ C_1 \big(\norm{\Pis(\V_1-\V_2)}+\r \norm{\V_1-\V_2}\big) \qquad (n\geq 2),
\end{align}
where $C_1=C(\m,h_0^{-1},\norm{\V_1},\norm{\V_2})$.
\end{subequations}
\end{Lemma}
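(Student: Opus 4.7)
The strategy mirrors the proof of Lemma~\ref{L.B0-eigen}: I would view $\frac{1}{\r}\B^x[\V]$ for $\V\in\VV$ as a perturbation of $\frac{1}{\r}\L=\frac{1}{\r}\B^x[\z]$. Estimate~\eqref{C-estimates-3} of Corollary~\ref{C.prop} evaluated at $\V_2=\z$ gives $\B^x[\V]=\L+\r\,\delta\B[\V]$ with $\delta\B:\VV\to\M_{N(1+d)}(\RR)$ smooth and uniformly bounded by $C(\m,h_0^{-1},\norm{\V})$. Recall that $\L$ has two simple nonzero eigenvalues $\pm\sqrt\alpha$ (with $\alpha=\gamma_1\sum_j\delta_j$) whose one-dimensional eigenspaces lie in $\mathrm{range}(\Pifx)$, and the semisimple eigenvalue $0$ with $\ker(\L)=\mathrm{range}(\Id-\Pifx)$ of dimension $3N-2$. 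Hence the $2N$ non-trivial eigenvalues of $\frac{1}{\r}\B^x[\V]$ decompose into two fast eigenvalues $\mu_{\pm 1}$ perturbing $\pm\sqrt\alpha/\r$, and $2(N-1)$ baroclinic eigenvalues $\mu_{\pm n}$, $n\geq 2$, which, together with the $N$ trivial eigenvalues $u^x_n(\V)$ from Lemma~\ref{L.B-trivialeigen}, arise from the splitting of the exceptional eigenvalue $0$.

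For the fast eigenpair I would write Dunford--Taylor integrals on contours $\Gamma_\pm$ of $O(1)$ radius around $\pm\sqrt\alpha$ in the spectrum of $\B^x[\V]$ itself (not $\tfrac{1}{\r}\B^x[\V]$); the resolvent $(\B^x[\V]-\eta\Id)^{-1}$ is uniformly $O(1)$ on $\Gamma_\pm$ for $\r$ small, and the identity
\[\P_{\pm 1}[\V_1]-\P_{\pm 1}[\V_2]=\frac{-1}{2\pi i}\int_{\Gamma_\pm}(\B^x[\V_1]-\eta)^{-1}\bigl(\B^x[\V_2]-\B^x[\V_1]\bigr)(\B^x[\V_2]-\eta)^{-1}\,\dd\eta\]
combined with $\Norm{\B^x[\V_1]-\B^x[\V_2]}\leq\r C_1\norm{\V_1-\V_2}$ yields~\eqref{PV-1}. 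For the slow spectral group near $0$ I would apply Kato's reduction process \cite[Thm~II.2.3]{Kato95}: since $0$ is semisimple for $\L$ and $\mu_{\pm n}(\V)$ depends holomorphically on $\r$ near $\r=0$ (inherited from the tridiagonal reduction of Lemma~\ref{L.B0-eigen}), the perturbed eigenvalues near $0$ coincide to leading order with those of the reduced operator $M[\V]\eqdef(\Id-\Pifx)\delta\B[\V](\Id-\Pifx)=\frac{1}{\r}\Pisx\B^x[\V]\Pisx$ on $\ker(\L)$. Specializing to $\V=\Z(\V)\in\ZZ$ (the state obtained by zeroing all velocity entries of $\V$) reproduces Lemma~\ref{L.B0-eigen}, producing $2(N-1)$ distinct baroclinic eigenvalues separated by $\geq C_0^{-1}$ alongside the $N$-fold degenerate trivial value $u^x_n(\Z)=0$. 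Continuity then ensures, for general $\V$ with shears bounded by $\nu^{-1}$, persistence of both the separation between baroclinic eigenvalues and their isolation from the trivial $u^x_n(\V)$'s (which all collapse to the common mean velocity as shears vanish), giving diagonalizability and the bounds~\eqref{muV-1}--\eqref{muV-2}.

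The delicate step will be the asymmetric Lipschitz estimates~\eqref{PV-2}--\eqref{PV-3}. A naive contour integral around $\mu_{\pm n}(\V)$, $n\geq 2$, of radius $O(1)$ (where $(\tfrac{1}{\r}\B^x[\V]-\eta)^{-1}=O(1)$ thanks to~\eqref{muV-2}) combined with $\Norm{\tfrac{1}{\r}\B^x[\V_1]-\tfrac{1}{\r}\B^x[\V_2]}\leq C\norm{\V_1-\V_2}$ only yields the coarser $\Norm{\P_{\pm n}[\V_1]-\P_{\pm n}[\V_2]}\leq C\norm{\V_1-\V_2}$. Two complementary mechanisms give the refinement in~\eqref{PV-3}. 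First, $h_1=\delta_1-\zeta_2+\r(\r^{-1}\zeta_1)$ carries a built-in $\r$ prefactor in the $\r^{-1}\zeta_1$ direction, so (tracking this prefactor through the entries of $M[\V]$) the reduced operator and its spectral projections are Lipschitz $O(\r)$ in $\r^{-1}\zeta_1$. Second, under the Galilean invariance of~\eqref{FS-V-mr} --- which can be checked directly on~\eqref{def-A}, since under $u^x\mapsto u^x+c$ both ${\sf M}(u^x)$ and $\D(u^x)$ gain a $c\Id$, hence $\frac{1}{\r}\B^x[\V]\mapsto\frac{1}{\r}\B^x[\V]+c\Id$ and spectral projections are unchanged --- the change $\Delta\V$ corresponding via~\eqref{UtoV} to such a shift, namely $\Delta w^x=c(\delta+\zeta_1)$ and $\Delta v^x_n=c\,\r^2 r_n$, leaves $\P_{\pm n}$ invariant; absorbing the $w^x$- and $w^y$-parts of any $\V_1-\V_2$ into such Galilean shifts therefore leaves only an $O(\r^2\norm{\Pif(\V_1-\V_2)})$ residual in slow components, which costs only $O(\r^2\norm{\Pif(\V_1-\V_2)})$ in projection Lipschitz. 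Combined with the $O(1)$ slow-variable Lipschitz of $M[\V]$, both mechanisms produce~\eqref{PV-3}. Finally~\eqref{PV-2} follows by composing on the right with $\Pifx$ in the resolvent identity and using that the leading projection $\P_{\pm n}[\V]|_{\r=0}$ maps into $\ker(\L)=\mathrm{range}(\Id-\Pifx)$, so $\P_{\pm n}[\V]\Pifx=O(\r)$ uniformly (consistent with~\eqref{PZ-3} in the base case $\V\in\ZZ$); tracking this $\Pifx$-factor through the contour-integral difference provides the extra $\r$-factor.
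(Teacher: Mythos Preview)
Your route differs from the paper's in a substantive way. The paper does \emph{not} perturb from $\L=\B^x[\z]$ but from $\B^x[\Z]$, where $\Z\in\ZZ$ is obtained from $\V$ by zeroing all velocity entries. Since Lemma~\ref{L.B0-eigen} already gives $\B^x[\Z]$ simple non-trivial eigenvalues, the passage $\B^x[\Z]\to\B^x[\V]$ is a perturbation of simple eigenvalues and no reduction process is needed. The price is that $\B^x[\V]-\B^x[\Z]$ is not small (it carries the full velocity), so the paper introduces the finer decomposition
\[
\B^x[\V]=\B^x[\Z]+\r K^x\Id+\r\,\delta\B_{\rm f}[\V]+\r\,\delta\B_{\rm s}[\V],
\]
with $\Norm{\delta\B_{\rm s}}\leq C(\nu^{-1}+\r\norm{\V})$ and $\Norm{\delta\B_{\rm f}\Pisx}\leq \r\,C\norm{\V}$. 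The scalar $K^x\Id$ is precisely your Galilean shift; the structural estimate on $\delta\B_{\rm f}\Pisx$, combined with $\Pifx\P_{\pm n'}[\Z]=O(\r)$ for $n'\geq 2$, is what makes the Dunford--Taylor series converge on the $\r$-scale contours around the baroclinic eigenvalues despite the resolvent being $O(\r^{-1})$ there. Your reduced-operator approach and the paper's decomposition encode the same structure; the paper's packaging is more uniform and avoids tracking the reduction through two levels of perturbation.

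There is, however, a genuine gap in your handling of $w^y$. Your Galilean argument ($u^x\mapsto u^x+c$ gives $\frac1\r\B^x\mapsto\frac1\r\B^x+c\Id$) correctly absorbs $w^x$, but the $y$-Galilean $u^y\mapsto u^y+c'$ does \emph{not} shift $\frac1\r\B^x$ by a scalar: it leaves $\A^x$ unchanged, so the projections of $\B^x$ change only through $\J^F$. What actually saves~\eqref{PV-3} for the $w^y$-direction is that $\J^F$ depends on $w^y$ only via the entries of $C(u^y)$, namely $\r u_1^y$ and $u_j^y-u_{j-1}^y=v_j^y+O(\r^2)$, each of which has $w^y$-derivative $O(\r)$; hence $\partial_{w^y}\P_{\pm n}=O(\r)$ directly. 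The same omission affects your ``continuity'' step for eigenvalue separation: to pass from $M[\Z(\V)]$ to $M[\V]$ under only the shear-smallness hypothesis you must invoke the $x$-Galilean already there (not only for the Lipschitz estimates) and separately check the $O(\r)$ dependence on $w^y$ as above. With these corrections your plan closes.
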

\begin{proof}
We shall use perturbation arguments, starting from the knowledge that, thanks to Lemma~\ref{L.B0-eigen}, the non-zero eigenvalues of $\B^x[\Z]$, for any $\Z\in \ZZ$, are simple. In what follows, we denote $\Z\in \ZZ$ the vector obtained from setting to zero all $n^{\rm th}$ entries of $\V$ with $n\geq N+1$. Identifying~\eqref{FS-V-mr} with~\eqref{FS-V-compact}, we may write (improving the description provided in Lemma~\ref{L.B})
\begin{equation}\label{B-decomp}
\B^x[\V] = \B^x[\Z] +\r K^x  \Id + \r \delta\B_{\rm f}[\V]+ \r \delta\B_{\rm s}[\V],
\end{equation} 
where one can choose $K^x=K^x(\V)$ (for example $K^x=(\sum_{i=1}^N\delta_i)^{-1}w^x$) such that $\delta\B_{\rm s}$ contains (at first order in $\r$) only contributions from shear velocities $v_2,\dots,v_N$, while $\delta\B_{\rm f}$ contains a leading-order contribution in $w^x,w^y$, but only on ``fast variable'' entries.
 More precisely, one has
\begin{align}\label{est-deltaB}
\Norm{ \delta\B_{\rm s}[\V] } + \Norm{ \delta\B_{\rm f}[\V] } &\leq \ C(\m,h_0^{-1},\norm{\V}) \norm{\V} , \\
\label{est-deltaBs}
 \Norm{ \delta\B_{\rm s}[\V]}  &\leq \ C(\m,h_0^{-1},\norm{\V}) (\nu^{-1}+\r\norm{\V} ),   \\
 \label{est-deltaBf}
\Norm{  \delta\B_{\rm f}[\V] \Pisx } &\leq \ \r\  C(\m,h_0^{-1},\norm{\V}) \norm{\V} .
 \end{align}

 Since all the non-trivial eigenvalues of $\B^x[\Z]$ (and therefore the ones of $\B[\Z]+\r K^x\Id$) are simple, we may use the Dunford-Taylor integral
\begin{equation}\label{eqn:Cauchy-B}
\P_{\pm n}[\V]=\frac{-1}{2\pi i}\int_{\Gamma_{\pm n}}\sum_{k=0}^\infty \R(\eta)\big(-\r(\delta\B_{\rm s}+\delta\B_{\rm f}) \R(\eta)\big)^k\ d\eta,
\end{equation}
where $\Gamma_{\pm n}$ is a positively oriented closed curve enclosing the eigenvalue $\r\mu_{\pm n}(\Z)+\r K^x$, but excluding the other eigenvalues of $\B[\Z]+\r K^x\Id$, and
\begin{equation} \label{R-decomp}
\R(\eta) \eqdef  (\B^x[\Z]+\r K^x\Id-\eta\Id)^{-1}= \sum_{n'=1}^N \frac{ \P_{n'}[\Z] }{ \r\mu_{n'}(\Z)+\r K^x-\eta}+\frac{ \P_{-n'}[\Z]}{\r \mu_{-n'}(\Z)+\r K^x-\eta}+\frac{\P^0_{n'}[\Z]}{\r K^x-\eta},
\end{equation}
with $\mu_{\pm n}(\Z),\P_{\pm n}[\Z],\P^0_n[\Z]$ have been defined in Lemmata~\ref{L.B-trivialeigen} and~\ref{L.B0-eigen}.
\medskip

Let us first consider the case $n=1$. From Lemma~\ref{L.B0-eigen}, there exists $C_0=C(\m,h_0^{-1},\norm{\Z})$ such that if $n=1$, one can choose $\Gamma_{\pm 1}$ as the circle of center $\r\mu_{\pm1}(\Z)$ and of radius $C_0^{-1}$. Using~\eqref{est-deltaB} and~\eqref{R-decomp}, one can restrict $\r\in(0,\r_0)$ with $\r_0^{-1}=C(\m,h_0^{-1},\norm{\V})$ such that the series in~\eqref{eqn:Cauchy-B} is immediately convergent and
\[ \Norm{\P_{\pm 1}[\V]-\P_{\pm 1}[\Z ] } \leq \frac{1}{\pi } \sum_{n=1}^\infty \r^n \big(\sup_{\Gamma_{\pm n}}\Norm{ \R(\eta)}\big)^{1+n} \big(\Norm{ \delta\B_{\rm s}}+\Norm{\delta\B_{\rm f}}\big)^n\leq \r\  C(\m,h_0^{-1},\norm{\V}) \norm{\V},
\]
since $\P_{\pm 1}[\Z ] =\frac{-1}{2\pi i}\int_{\Gamma_{\pm n}}\R(\eta)\ d\eta$ is the first term of the series. Since $ \P_{\pm 1}[\Z],\P_{\pm 1}[\V]$ are rank-one, one has
 \[\r \mu_{\pm 1}(\V) \ = \ \tr\big( \B^x[\V] \P_{\pm1}[\V]\big)\ = \ \r \mu_{\pm 1}(\Z) \ + \ \tr\big( \B^x[\V]  ( \P_{\pm1}[\V]- \P_{\pm1}[\Z])\big)\ + \ \tr\big( (\B^x[\V] -\B^x[\Z]) \P_{\pm1}[\Z]\big) ,\]
 and the upper and lower bound on $\r|\mu_{\pm1}(\V)|$ in~\eqref{muV-1} follows from the ones on $\r|\mu_{\pm1}(\Z)|$ given in~\eqref{muZ-1}, and the above estimate with~\eqref{B-estimates} in Lemma~\ref{L.B}.

We now turn to the case $2\leq n\leq N$. In this case, we set $\Gamma_{\pm n}$ as the circle of center $\r\mu_{\pm1}(\Z)$ and of radius $\r C_0^{-1}$ (with $C_0$ as in~\eqref{muZ-2}), thus we may only ensure
\[ \sup_{\Gamma_{\pm n}}\Norm{ \R(\eta)} \leq \r^{-1}C(\m,h_0^{-1},\norm{\Z} ).\]
As a consequence, we need the precised estimates~\eqref{est-deltaBs}-\eqref{est-deltaBf} as well as the decomposition into partial fraction~\eqref{R-decomp} to ensure that the series converge. Indeed, thanks to~\eqref{est-deltaBs}, one may augment $\nu$ and lower $\r_0$ in order to ensure
\[ \r \sup_{\eta\in\Gamma_{\pm n}}\Norm{ \R(\eta)} \Norm{ \delta\B_{\rm s}[\V]}\leq  C(\m,h_0^{-1},\norm{\V})   (\r\norm{\V}+\nu^{-1}) \leq \frac12.\]
Now, we use that for any $n'\geq 2$,
\[ \delta\B_{\rm f}[\V]  \P_{\pm n'}[\Z] =\big(   \delta\B_{\rm f}[\V] \Pisx \big)  \P_{\pm n'}[\Z] +\delta\B_{\rm f} [\V] \big( \Pifx \P_{\pm n'}[\Z] \big) .\]
Thus by~\eqref{PZ-3} and~\eqref{est-deltaBf}, one has
\[  \Norm{ \delta\B_{\rm f} [\V]\P_{\pm n'}[\Z]  } \leq  C(\m,h_0^{-1},\norm{\V}) \ \r\norm{\V} \qquad (n'\geq 2).\]
The contribution from $\P^0_{n'}[\Z]$ is straightforwardly estimated, and the contribution from $\P_{\pm n'}$ with $n'=1$ contains no difficulty, since, $( \r\mu_n(\Z)+\r K^x-\eta)^{-1}$ is uniformly bounded. Altogether, one has
\[ \sup_{\eta\in\Gamma_{\pm n}} \Norm{ \r \delta\B_{\rm f} [\V]\R(\eta )} \leq  C(\m,h_0^{-1},\norm{\V}) \ \r\norm{\V}.\]
Restricting $\r\in(0,\r_0)$ if necessary,  the series in~\eqref{eqn:Cauchy-B} converges and
\[ \Norm{\P_{\pm n}[\V]-\P_{\pm n}[\Z ] } \leq \  C(\m,h_0^{-1},\norm{\V})   (\r\norm{\V}+\nu^{-1}) .
\]
In the same way, and using~\eqref{PZ-3}, one easily sees that
\[ \Norm{ \r \R(\eta ) \Pifx } \leq  C(\m,h_0^{-1},\norm{\V}) \ \r\norm{\V},\]
and therefore
\[ \Norm{ (\P_{\pm n}[\V]-\P_{\pm n}[\Z ]) \Pifx} \leq \  \r\  C(\m,h_0^{-1},\norm{\V})  \norm{\V} .
\]

In particular, provided $\r\in(0,\r_0)$ and $\nu^{-1}$ are sufficiently small, $\Gamma_{\pm n}$ contains exactly one eigenvalue of $\frac1\r\B^x[\V]$,
 and~\eqref{muV-1}-\eqref{muV-2} follow.

Estimates~\eqref{PV-1},\eqref{PV-2},\eqref{PV-3} are obtained identically as above, using the decomposition 
\[ \B^x[\V_2]=\B^x[\V_1]+\r (K^x_2 -K^x_1) \Id+\delta\B[\V_1,\V_2]\]
where
\[ \delta\B[\V_1,\V_2] \ \eqdef \  \B^x[\Z_2]-\B^x[\Z_1] + \r (\delta\B_{\rm f}[\V_2]-\delta\B_{\rm f}[\V_1])+ \r (\delta\B_{\rm s}[\V_2]-\delta\B_{\rm s}[\V_1]) ,
 \]
and remarking that
\begin{align*}
|K^x_2 -K^x_1|+\frac1\r \Norm{ \B^x[\Z_2]-\B^x[\Z_1]} +\Norm{\delta\B_{\rm s}[\V_2]-\delta\B_{\rm s}[\V_1]} & \leq C_1 (\norm{\Pis(\V_1-\V_2)}+\r\norm{\V_1-\V_2} ) \ ,\\
 \Norm{\delta\B_{\rm f}[\V_2]-\delta\B_{\rm f}[\V_1] }+\frac1\r \Norm{ (\delta\B_{\rm f}[\V_2]-\delta\B_{\rm f}[\V_1]) \Pisx}& \leq C_1 \norm{\V_1-\V_2}\ ,
\end{align*}
with $C_1=C(\m,h_0^{-1},\norm{\V_1},\norm{\V_2})$. This concludes the proof of Lemma~\ref{L.B-eigen}.
\end{proof}

\begin{Remark}
The proof of Lemma~\ref{L.B-eigen} is somewhat cumbersome and rely on delicate properties of $\B^x[\V]$, namely~\eqref{B-decomp} with~\eqref{est-deltaBs} and~\eqref{est-deltaBf}, because we wanted to be as precise as possible as for the hyperbolicity conditions (see Remark~\ref{R.naive}). The proof is considerably shortened and appears more robust if one replaces the assumption of Lemma~\ref{L.B-eigen} with the more stringent $\norm{\V}\leq \nu^{-1}$, as one may then simply use $\B^x[\V]=\B^x[\Z] +\r\delta\B[\V]$ with $\norm{\delta\B[\V]}\leq C(\m,h_0^{-1},\norm{\V})\norm{\V}$ in lieu of~\eqref{B-decomp}.
\end{Remark}

\paragraph{Acknowledgements.} 
The author is partially supported by the Agence Nationale de la Recherche, project ANR-13-BS01-0003-01 DYFICOLTI.

\end{document}